\theoremstyle{thmstyleone}%
\newtheorem{theorem}{Theorem}
\newtheorem{proposition}[theorem]{Proposition}%
\theoremstyle{thmstyletwo}%
\newtheorem{rem}{Remark}%
\newtheorem*{claim*}{Claim}
\newtheorem{lemma}[theorem]{Lemma}
\newtheorem{corollary}[theorem]{Corollary}
\theoremstyle{thmstylethree}%
\newtheorem{definition}{Definition}%
\numberwithin{equation}{section}
\numberwithin{theorem}{section}
\newcommand{\R}{\mathbb{R}}
\newcommand{\N}{\mathbb{N}}
\newcommand{\dist}{\mathrm{dist}}
\newcommand{\IR}{\mathbb{IR}}
\DeclareMathOperator{\diam}{\mathbf{diam}}
\DeclareMathOperator{\midpoint}{\mathbf{mid}}
\DeclareMathOperator{\interior}{\mathbf{int}}
\newcommand{\epsi}{\ensuremath{\varepsilon}}
\newcommand{\authorfootnote}[1]{%
	\begingroup
	\renewcommand\thefootnote{\arabic{footnote}}\footnote{#1}%
	\endgroup
}
\begin{document}

\title[Stable periodic orbits for delay differential equations with unimodal feedback]{Stable periodic orbits for delay differential equations with unimodal feedback}


\author[1]{\fnm{G\'abor} \sur{Benedek}\authorfootnote{G. Benedek has been supported by the M\'oricz Doctoral Scholarship, 2022-23.}}
\email{benedek.gabor.istvan@stud.u-szeged.hu}

\author[2]{\fnm{Tibor} \sur{Krisztin}\authorfootnote{
This research has been supported by the National Research, Development and Innovation Fund, Hungary [project no.  TKP2021-NVA-09, and grants K-129322, KKP-129877] and by the National Laboratory for Health Security [RRF-2.3.1-21-2022-00006].}}
\email{krisztin@math.u-szeged.hu}

\author[3]{\fnm{Robert} \sur{Szczelina}\authorfootnote{R. Szczelina has been supported by 
		Polish national strategic program for the excellence of science
		,,Inicjatywa Doskona{\l}o\'sci - Uniwersytet Jagiello\'nski'' and  
		by National Science Centre, Poland, grant no. 2023/49/B/ST6/02801. 
		For the purpose of Open Access, the author has applied a 
		CC-BY public copyright licence to any Author Accepted Manuscript 
		(AAM) version arising from this submission.}}
\email{robert.szczelina@uj.edu.pl}

\affil[1]{\orgdiv{Bolyai Institute}, \orgname{University of Szeged}, \orgaddress{\street{Aradi v\'ertan\'uk tere 1}, \city{Szeged}, \postcode{H-6720}, \country{Hungary}}}

\affil[2]{\orgdiv{HUN-REN-SZTE Analysis and Applications Research Group, Bolyai Institute}, \orgname{University of Szeged}, \orgaddress{\street{Aradi v\'ertan\'uk tere 1}, \city{Szeged}, \postcode{H-6720}, \country{Hungary}}}

\affil[3]{\orgdiv{Faculty of Mathematics and Computer Science}, \orgname{Jagiellonian University}, \orgaddress{\street{{\L}ojasiewicza 6}, \city{Krak\'ow}, \postcode{30-348}, \country{Poland}}}


\abstract{	We consider delay differential equations of the form $ y'(t)=-ay(t)+bf(y(t-1)) $ with positive parameters $a,b$ and a unimodal  $f:[0,\infty)\to [0,1]$. 
It is assumed that the nonlinear $f$ is close to a function $g:[0,\infty)\to [0,1]$  with $g(\xi)=0$ for all $\xi>1$.
The fact  $g(\xi)=0$ for all $\xi>1$ allows to construct stable periodic orbits for the equation $x'(t)=-cx(t)+dg(x(t-1))$ with some parameters $d>c>0$. 
Then it is shown that the equation $ y'(t)=-ay(t)+bf(y(t-1)) $ also 
has a stable periodic orbit provided $a,b,f$ are sufficiently close to $c,d,g$ in a certain sense. 
The examples include  $f(\xi)=\frac{\xi^k}{1+\xi^n}$ for parameters $k>0$ and  $n>0$ together with the discontinuous $g(\xi)=\xi^k$ for $\xi\in[0,1)$, and $g(\xi)=0$ for $\xi>1$. 
The case $k=1$ is the famous  Mackey--Glass equation, 
the case $k>1$ appears in population models with Allee effect, and the case  $k\in(0,1)$ arises in some economic growth models.  
The obtained stable periodic orbits may have complicated structures.}

\keywords{delay differential equations, unimodal feedback, stable periodic orbits, validated numerics}

\pacs[MSC Classification]{34K30, 34K39, 65G30}



\maketitle

\setcounter{footnote}{0}

\section{Introduction}\label{sec1}
Delay differential equations of the form
\begin{equation}
	y'(t)=-ay(t)+bh(y(t-\sigma))
	\label{eqn:h}
\end{equation} 
with positive parameters $a,b,\sigma$ and a nonlinear function $h:[0,\infty)\to [0,\infty)$ 
arise in a wide range of applications. By rescaling the time we may assume $\sigma=1$. 

For monotone nonlinearities $h$ the dynamics is relatively simple: there is a Morse decomposition of the global attractor, on each Morse component the dynamics is planar, and a fine structure of the global attractor can be described (\cite{K1}, \cite{K5}, \cite{KWW}, \cite{MPS}). 
A non-monotone, in particular a  unimodal nonlinearity $h$ can cause entirely different dynamics compared to the monotone case (\cite{W0}, \cite{W1}, \cite{W2}, \cite{W3}). Function $h$ is called unimodal if it has exactly one extremum, and changes its monotonicity at only one point. 
In 1977 Lasota \cite{LW}, and Mackey and Glass \cite{MC} independently proposed Equation \eqref{eqn:h} with a unimodal nonlinearity $h$ to model the feedback control of blood cells. 
Lasota's nonlinearity was
$f(\xi)= \xi^k e^{-\xi},$ 
while Mackey and Glass introduced $f(\xi)=\frac{\xi}{1+\xi^n}$ 
for some positive parameters $k, n$. 
The work of Mackey and Glass \cite{MC} stimulated an intensive research on equations of the form \eqref{eqn:h} with non-monotone nonlinearities. 
See \cite{HOW} for a review of the impact of the Mackey--Glass equation on 
nonlinear dynamics. 

There are several results on the dynamics generated by Equation \eqref{eqn:h}. 
Some papers consider the delay $\sigma$ as a parameter in Equation \eqref{eqn:h}, and the coefficients $a,b$ may depend on the delay as well. 
The theoretical results show stability of equilibria, existence of oscillations and periodic behavior, local and global Hopf bifurcations, establish some connecting orbits, see e.g.,
\cite{BBI}, \cite{Kuang}, \cite{Smith}, \cite{Ruan},\cite{Wei}. 
The papers   
\cite{FCP}, \cite{LizRost1}, \cite{LizRost2}, \cite{RostW} 
estimate the size of the global attractor.
Bistable nonlinearities are studied in 
\cite{HYZ}, \cite{HWW}, \cite{Lin}, \cite{LizRH}. 
The numerical results \cite{GS}, \cite{DH}, \cite{Morozov} demonstrate the complexity of the dynamics.   
Despite the large number of rigorous mathematical results, numerical and experimental studies, the dynamics of Equation \eqref{eqn:h} is not completely understood yet. 
There is still no rigorous proof of chaos for nonlinearities like 
$\xi e^{-\xi}$, $ \frac{\xi}{1+\xi^n} $, $ \frac{\xi^2}{1+\xi^n} $. 
However, \cite{Lani-W} shows chaos for some constructed unimodal nonlinearities. 

In this paper we consider a pair of delay differential equations 
\begin{equation*}\label{eqn:Ef}
	\tag{$E_{f}$}
	y'(t)=-ay(t)+bf(y(t-1))
\end{equation*} 
and 
\begin{equation}\label{eqn:Eg}
	\tag{$E_{g}$}
	x'(t)=-cx(t)+dg(x(t-1))
\end{equation}
with positive parameters $a,b,c,d$ and nonlinearities 
$f,g$ satisfying 
\begin{enumerate}
	\item[(F)]  $f:[0,\infty)\to [0,1]$ is continuous, $f|_{(0,\infty)}$ is $C^1$-smooth,  $f(0)=0$, and
	$ f(\xi)<1$ for all $\xi>1$,  
\end{enumerate}
and 
\begin{enumerate}
	\item[(G)]   $g:[0,\infty)\to [0,1] $,   $g|_{(0,1)}$ is $C^1$-smooth, $g'>0$ on $(0,1)$, $g(0)=0=\lim_{\xi\to 0+}g(\xi)$, $\limsup_{\xi\to 1-}g'(\xi)<\infty$, 
	and $g(\xi)=0$ for all $\xi>1$. 
\end{enumerate}

Function $g$ is an extreme case of a unimodal feedback, see Figure \eqref{fig:figf}.
Equation  \eqref{eqn:Eg} in itself is an interesting and nontrivial model equation for delayed feedback. 
Equation  \eqref{eqn:Eg} looks simple because of $g(\xi)=0$ for $\xi>1$. 
For $t$-intervals where $x(t-1)>1$, Equation \eqref{eqn:Eg} reduces to $x'(t)=-cx(t)$.  
This fact allows to construct certain special solutions of \eqref{eqn:Eg}, for example 
periodic solutions for some $d>c>0$. 
The idea is that if the parameters $a,b$ are close to $c,d$, respectively, and function 
$f$ is close to $g$ in a certain sense given below, then  
the special solutions obtained for Equation \eqref{eqn:Eg} are 
preserved for Equation \eqref{eqn:Ef}. 
In this manuscript the idea is worked out for periodic solutions: 
we construct stable priodic orbits for  Equation \eqref{eqn:Eg}, and 
prove that Equation \eqref{eqn:Ef} has stable periodic orbits as well, provided $a,b,f$ are close to $c,d,g$.

Examples include the nonlinearities 
\begin{equation}\label{proto}
	f(\xi)=\dfrac{\xi^k}{1+\xi^n}
\end{equation}
and 
\begin{equation}\label{proto2}
	f(\xi)=\xi^k e^{-\xi^n}
\end{equation}
with parameters $k>0$ and $n>0$. 
In these cases function $g$ is the 
pointwise limit of $\frac{\xi^k}{1+\xi^n}$ and  $ \xi^k e^{-\xi^n} $
for $\xi\in [0,\infty)\setminus\{1\}$, as $n\to\infty$, 
that is, 
$g(\xi)=\xi^k$ if $0\leq \xi < 1$, and $g(\xi)=0$ for $\xi>1$. See Figure 1. 
The value of $g(1)$ is irrelevant. 
The result, for the examples is that if Equation \eqref{eqn:Eg} has a stable periodic orbit for some $c,d$, and  $a,b$ are close to $c,d$, and $n$ is sufficiently large, then   \eqref{eqn:Ef} has a stable periodic orbit. 

Example \eqref{proto} with $k=1$ is the famous Mackey--Glass nonlinearity \cite{MC}. 
The case $k>1$ appears in models of population dynamics with Allee effect \cite{Morozov}, \cite{HWW}.
In particular, for $k=2$, the paper \cite{Morozov} numerically observed 
a variety of dynamical behavior depending on the parameters. 
The case $k\in (0,1)$ arises in a variant of a neoclassical economic growth model by replacing the so called pollution function 
$e^{-\delta \xi}$ by $\frac{1}{1+\xi^n}$ or by $ e^{-\xi^n} $, see \cite{Matsumoto}. 

\begin{figure}[h]
	\centering
	\includegraphics[width=1\linewidth]{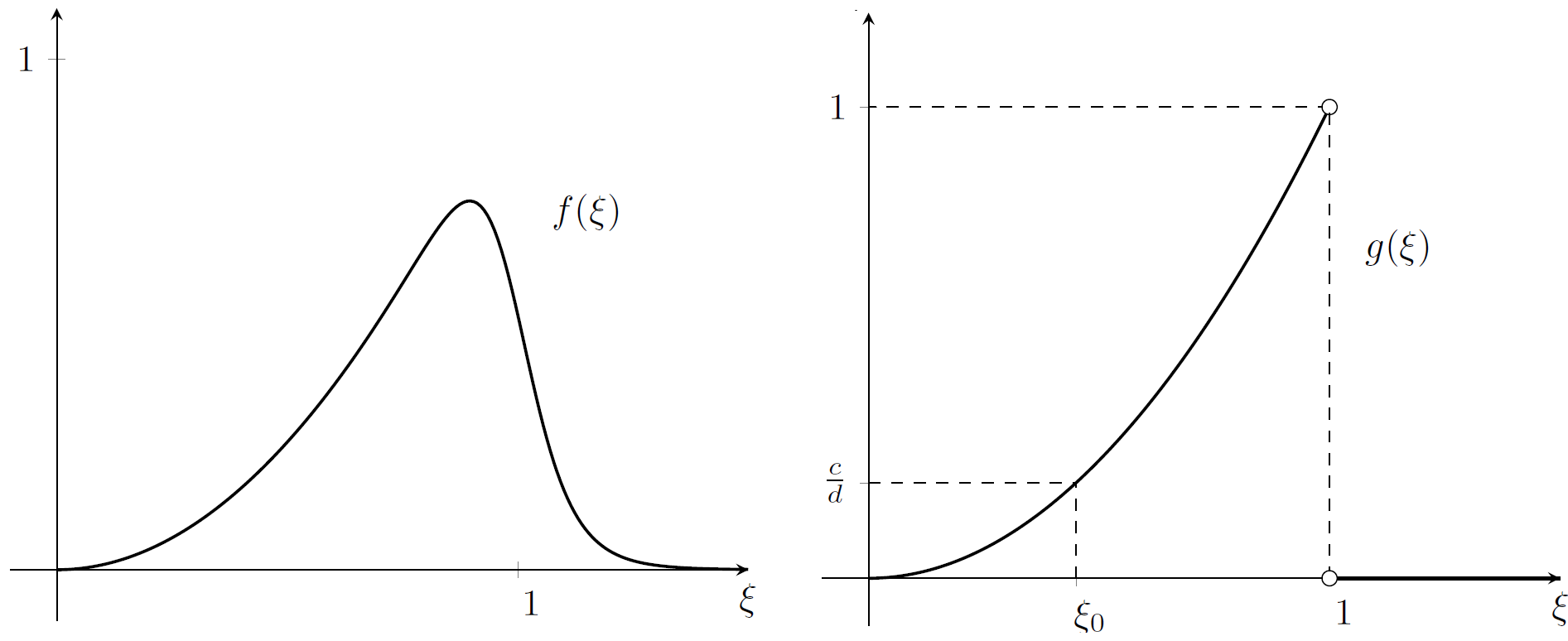}
	\caption{The shape of functions $f(\xi)=\frac{\xi^k}{1+\xi^n}$ (left) and $g(\xi)=\xi^k$ (right) satisfying conditions (F) and (G) respectively in case $k=2$.}
	\label{fig:figf}
\end{figure}

The idea of this paper originates from \cite{BKV} where $f(\xi)=\frac{\xi}{1+\xi^n}$ was the Mackey--Glass nonlinearity with $g(\xi)=\xi$ for $\xi\in[0,1)$, $g(\xi)=0$ for $\xi>1$.  
In \cite{BKV} the linearity  of $g$ in $[0,1)$ was essential to construct 
complicated looking periodic solutions of \eqref{eqn:Eg}. 

Here we extend \cite{BKV} to all nonlinearities $f$ close to $g$ satisfying 
only condition (G). This includes not only the prototype nonlinearities 
\eqref{proto} and \eqref{proto2} for large $n$, but also a large class of other unimodal nonlinearities. In addition, for the  prototype nonlinearities 
\eqref{proto} and \eqref{proto2}, a threshold number  can be explicitly constructed so that the results are valid if $n$ is larger than the threshold number. 

Most of the periodic orbits obtained here do not seem to come from local or global  Hopf bifurcations.  Projecting the periodic solution $q$ into the plane by $t\mapsto 
(q(t),q(t-1))\in\R^2$ produce complicated looking figures for some periodic solutions $q$, although the corresponding periodic orbits are stable.  

The region of attraction of the stable periodic orbit of \eqref{eqn:Ef} is estimated as well. Therefore we rigorously show bistability for the prototype  nonlinearities if $k>1$. 
One stable attractor is the trivial equilibrium $0$, the other one is the 
obtained stable periodic orbit.

We expect that this paper with \cite{BKV} opens up a new direction to study a wide class of delay differential equations with nonlinearities close to 
$g$. In particular, in forthcoming papers,  motivated by the numerical results of \cite{Morozov}, we consider the nonlinearity $f(\xi)=\frac{\xi^2}{1+\xi^n}$ 
modeling Allee effect, prove connections from equilibria to periodic orbits and between periodic orbits, moreover, we show the existence of homoclinic orbits. By the approach of the present paper and \cite{BKV} we hope to prove  rigorously complicated dynamics for the prototype nonlinearities for large values of $n$. 
Remark that the paper \cite{I} constructed homoclinic orbits for certain unimodal nonlinearities, however not including \eqref{proto}, \eqref{proto2}, and not having the spectral condition expected to be necessary for Shilnikov chaos.  

Now we give the main results of the paper by using more technicalities. 
Let $g^{-1}$ denote the inverse of the restriction $g|_{[0,1)}$.
The following property of Equation \eqref{eqn:Eg} plays an important role.  
\begin{enumerate}
	\item[(P)] The parameters  $d>c>0$ and the function $g$ are given such that 
	(G) holds and  Equation \eqref{eqn:Eg} has a periodic solution $p:\mathbb{R}\to (0,\infty)$ with minimal period $\omega_p>0$, and  
	\begin{enumerate}
		\item[(P1)] $p(0)=1 $, $p(t)>1$ for all $t\in [-1,0)$; 
		\item[(P2)] $(p(t),p(t-1))\ne \left(1,g^{-1}\left(\frac{c}{d}\right)\right)$ for all $t\in[0,\omega_p]$. 
	\end{enumerate}
\end{enumerate}
Set $\mathcal{O}_p=\{ {p}_t:\, t\in [0,\omega_p]\}$, the orbit of $p$. 

In order to define the closeness of $f$ to the discontinuous $g$, for a function $u: I\to\mathbb{R}$, given on an interval $I$, introduce   
$$\|u\|_I=\sup_{\xi\in I}|u(\xi)|.$$

The main result can be formulated as follows.

\begin{theorem}\label{main_new}
	Suppose that property (P) holds. 
	Choose $\kappa_1>0$, $\kappa_2>0$ such that 
	$p(\mathbb{R})\subset (\kappa_1,\kappa_2)$, and let $m=\left[ \omega_p\right] $ be the integer part of $\omega_p$. 
	There exists an $\varepsilon>0$ so that if 
	$a, b, f$ are close to $c,d,g$, respectively, in the sense that
	\begin{itemize}
		\item[(1)]	
		$|a-c|<\varepsilon,\quad |b-d|<\varepsilon,$
		\item[(2)]	$\|f-g\|_{[\kappa_1,1-\varepsilon]\cup[1+\varepsilon,\kappa_2]}<\varepsilon$,
		\item[(3)]	$\|f'\|_{[1+\varepsilon,\kappa_2]}\left( \|f'\|_{[\kappa_1,\kappa_2]}\right)^m
		<\varepsilon$, 
	\end{itemize}
	then equation \eqref{eqn:Ef} has a periodic solution $q:\mathbb{R}\to (\kappa_1,\kappa_2)$, with minimal period $\omega_q$ so that  $|\omega_p-\omega_q|<\varepsilon$,   and
	$
	{\mathcal{O}_q}=\{ {q}_t:\, t\in [0,\omega_q]\}
	$ 
	is hyperbolic, orbitally stable, exponentially attractive with asymptotic phase,  and  $\dist\,({\mathcal{O}_p},\mathcal{O}_q)<\varepsilon.$ 
	In addition, the set 
	$$
	\{\psi \in C^+:\quad \psi(s)\in [1+\varepsilon,\kappa_2] \text{ for all } s\in [-1,0]\}
	$$ 
	belongs to the region of attraction of $\mathcal{O}_q$. 
	
\end{theorem}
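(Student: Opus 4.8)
The plan is to reduce the statement to a fixed-point problem for a Poincar\'e return map and then to perturb off the degenerate picture that $(E_g)$ provides. First I would record the structure of $p$ forced by (P): on any $t$-interval where $p(t-1)>1$ equation $(E_g)$ collapses to the linear $x'=-cx$, so $p$ is a finite concatenation of explicit arcs; moreover, since $p(-1)>1$ by (P1) and $p$ is continuous, $p>1$ on an interval of length strictly larger than $1$ containing $[-1,0)$, so there is a phase $t^*$ with $p_{t^*}(s)\ge 1+\delta$ for all $s\in[-1,0]$, some $\delta>0$, and with $p'(t^*)=-cp(t^*)\ne 0$. I take the section $\Sigma=\{\psi\in C^+:\ \psi(0)=p(t^*),\ \psi(s)\ge 1+\delta/2\ \text{for all }s\in[-1,0]\}$, transverse to both flows at $p_{t^*}$. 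The key elementary fact for $(E_g)$ --- immediate from $g\equiv 0$ on $(1,\infty)$ together with the pinned value $\psi(0)=p(t^*)$ --- is that the time-$1$ map is \emph{constant} on $\Sigma$, equal to $p_{t^*+1}$; hence the $(E_g)$-return map $\mathcal P_g$ is the constant map $\psi\mapsto p_{t^*}$ near $p_{t^*}$, so $\mathcal O_p$ is hyperbolic with all nontrivial Floquet multipliers equal to $0$ and attracts a neighbourhood of itself. The scheme of the rest is: the $(E_f)$-return map $\mathcal P_f$ on a small closed ball $B\subset\Sigma$ around $p_{t^*}$ is a $C^0$-small perturbation of this constant map, and its derivative is genuinely small, so Banach's contraction principle applies.

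Next I would make $\mathcal P_f$ well defined on $B$: the $(E_f)$-solution from $\psi\in B$ must re-cross $\Sigma$ transversally at a time $\tau(\psi)$ with $|\tau(\psi)-\omega_p|$ small and $\tau(\cdot)$ smooth. This follows from continuous dependence on initial data, which is uniform here despite the discontinuity of $g$ because by (P1)--(P2) one has $p'(t)\ne 0$ whenever $p(t)=1$: every crossing of the level $1$ is transversal, so the delayed argument sweeps through the ``transition layer'' $\{\xi:\,|\xi-1|\le\varepsilon\}$, across which $f$ bridges the jump of $g$, in time $O(\varepsilon)$ at a speed bounded below. Using (1)--(2) I would then show $\sup_{\psi\in B}\|\mathcal P_f(\psi)-p_{t^*}\|=O(\varepsilon)$: on the first unit step the delayed argument stays in $[1+\delta/2,\kappa_2]$, so $|bf(\cdot)-dg(\cdot)|<\mathrm{const}\cdot\varepsilon$ and $y_1=p_{t^*+1}+O(\varepsilon)$; over the remaining at most $m$ unit steps a Gronwall comparison with the (by Step 1, rigid) $(E_g)$-solution keeps the error $O(\varepsilon)$, the only delicate contribution --- the transits through the transition layer --- being controlled by the substitution $u\mapsto y(u-1)$, which turns $\int|f'(y(u-1))|\,du$ over one transit into a bounded multiple of the total variation of $f$ near $1$. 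Thus $\mathcal P_f(B)\subset B$.

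The heart of the proof is the bound $\|D\mathcal P_f(\psi)\|<1$ on $B$, which yields uniqueness of the fixed point $q_*$, hyperbolicity, orbital stability, exponential attraction and asymptotic phase by the standard theory of attracting periodic orbits of delay equations. For $\chi\in T_\psi\Sigma$ one has $\chi(0)=0$, so along the variational equation the first unit step gives $w(t)=\int_0^t e^{-a(t-u)}bf'(\psi(u-1))\chi(u-1)\,du$ with $\psi(u-1)\ge 1+\delta/2\ge 1+\varepsilon$, hence $\|w_1\|\le b\,\|f'\|_{[1+\varepsilon,\kappa_2]}\|\chi\|$ --- the crucial small factor. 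Over the remaining at most $m$ unit steps the variational operator has norm $\le\bigl(1+C\|f'\|_{[\kappa_1,\kappa_2]}\bigr)^{m}$ (each unit step has Lipschitz constant $1+b\int|f'(p(\cdot-1))|\,du$, and the transition-layer transits contribute only bounded factors via the same substitution), and the projection back onto $\Sigma$ along the flow costs a bounded factor. Multiplying, $\|D\mathcal P_f(\psi)\|\le C'\,\|f'\|_{[1+\varepsilon,\kappa_2]}\bigl(1+\|f'\|_{[\kappa_1,\kappa_2]}\bigr)^{m}$, which by hypothesis (3) can be made $<1$ once $\varepsilon$ is small; the contraction then produces the unique periodic solution $q$ with $q(\R)\subset(\kappa_1,\kappa_2)$, $|\omega_p-\omega_q|<\varepsilon$ and $\dist(\mathcal O_p,\mathcal O_q)<\varepsilon$. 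For the region of attraction I would note that from any $\psi$ with $\psi(s)\in[1+\varepsilon,\kappa_2]$ the first unit step makes $y_1$ equal to $\psi(0)e^{-a(1+\cdot)}+O(\varepsilon)$, a state whose $(E_g)$-forward orbit converges to $\mathcal O_p$ (part of the construction underlying (P)); shadowing over the bounded time needed to enter $B$ drives the $(E_f)$-solution into $B$, after which the contraction of $\mathcal P_f$ concludes.

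I expect the main obstacle to be precisely the uniformity of all these estimates in $f$ --- controlling the solution, the variational equation and the return time \emph{through} the discontinuity of $g$, equivalently through the possibly very steep transition layer of $f$, where crude Gronwall bounds diverge. The mechanism that saves the argument is the transversality of the level-$1$ crossings encoded in (P1)--(P2): it confines the dangerous behaviour to short time windows on which $\int|f'(y(u-1))|\,du$ stays bounded irrespective of how large $\|f'\|$ is, so that the only genuinely harmful amplification is the at-most-$m$-fold one over one period --- and that is exactly what condition (3) absorbs against the smallness of $\|f'\|_{[1+\varepsilon,\kappa_2]}$ contributed by the clean first unit step.
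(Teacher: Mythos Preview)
Your approach is essentially the paper's: a Poincar\'e return map on a section of initial segments lying in $[1+\varepsilon,\kappa_2]$, with the first unit step contributing the small factor $\|f'\|_{[1+\varepsilon,\kappa_2]}$ (since the delayed argument stays above $1+\varepsilon$), the remaining $m$ unit steps contributing at most $(1+b\|f'\|_{[\kappa_1,\kappa_2]})^m$ by a crude per-step Lipschitz estimate, and the $C^0$-closeness to $p$ handled through (P2) by the observation that the measure of the set where $p$ is $\delta$-close to the level $1$ is $O(\delta)$. The paper carries this out via direct two-solution difference estimates (its Propositions~2.4 and~2.6) rather than the variational equation, but the arithmetic is identical.

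Two places deserve tightening. First, to pass from $\|f'\|_{[1+\varepsilon,\kappa_2]}(1+b\|f'\|_{[\kappa_1,\kappa_2]})^m<1$ to hypothesis~(3) you must bound $(1+b\|f'\|)^m/\|f'\|^m$ by a constant independent of $f$; the paper does this by observing that conditions~(2) force $f(1-\varepsilon)>3\varepsilon$ and $f(1+\varepsilon)<\varepsilon$, hence $f'(\xi)<-1$ somewhere in $(1-\varepsilon,1+\varepsilon)$, so $\|f'\|_{[\kappa_1,\kappa_2]}>1$ automatically. Without this, your ``$C'$'' depends on $f$ and the contraction can fail. Second, your substitution $u\mapsto y(u-1)$ in the variational estimate is a red herring: the per-step bound $1+b\|f'\|_{[\kappa_1,\kappa_2]}$ already absorbs the transition layer, and the paper uses nothing finer there. (The transversality/measure argument is needed only for the $C^0$ comparison with the discontinuous $g$, not for the Lipschitz bound between two $(E_f)$-solutions.) Finally, for the region of attraction the paper does not invoke shadowing to $\mathcal O_p$; it argues directly that as long as $y_t\ge 1+\varepsilon_0$ one has $y'(t)<-\text{const}$ from condition~(iv), so the solution decreases monotonically until it hits the section --- simpler than tracking an $(E_g)$-limit.
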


In Section 3, first a more technical version of Theorem \ref{main_new} 
will be shown, see Theorem \ref{main0}. 
The constants in Theorem \ref{main0} expressing the closeness of $a,b,f$ to 
$c,d,g$ are constructed explicitly in terms of the properties of the 
periodic solution $p$ given in (P), see Sections 2--3. 

Condition (3) of Theorem \ref{main_new} is satisfied if the derivative $f'$ of $f$ is small on $[1+\varepsilon,\kappa_2]$ compared to $f'$ on 
$[\kappa_1,\kappa_2]$. This together with condition (2) requires  that 
$f$ is close to $g$ in the $C^1$-norm on $[1+\varepsilon,\kappa_2]$. 
In particular, for $ f_n(\xi)=\frac{\xi^k}{1+\xi^n }$ with a fixed $k>0$, 
$|f_n'(\xi)|\le K_0 n$ on $ [\kappa_1,\kappa_2] $, and $|f_n'(\xi)|\le \frac{K_0n}{1+(1+\varepsilon)^n}$ on $[1+\varepsilon,\kappa_2]$, for some $K_0$, see Section \ref{subsec2}. 
Therefore, condition (3) holds for large $n$. 
Our result does not work for the nonlinearity $f(\xi)=\xi e^{-\xi}$ because of the lack of condition (3).  

Theorems \ref{main_new} and \ref{main0} can guarantee the existence of a stable periodic orbit only if property (P) holds. 
In Section \ref{sec4} an analytic proof shows that (P) is always satisfied if $d$ is sufficiently large compared to $c$, namely
\begin{equation}\label{d>}
	d>\max\left\{ \dfrac{e^{2c}-1}{I_{cg}},\dfrac{c}{g(e^{-c})}\right\}
\end{equation}
where $I_{cg}=\int_1^2 e^{cs}g(e^{-c(s-1)})\,ds $. 
By Lemma \ref{lemma1}  the existence of a periodic solution with (P1) and $d>\frac{c}{g(e^{-c})}$ 
combined imply $p(t)\ge e^{-c}$ for all $t\in\R$. 
Hence
\begin{align*}
	\xi\in [e^{-c},1) &\ \text{ implies }\ -c+dg(\xi)\ge -c+dg(e^{-c})>0,\\
	\xi>1 &\ \text{ implies }\ -c+dg(\xi)= -c <0, 
\end{align*}
that is, a negative feedback condition holds with respect to $\xi=1$, 
see \cite{DVLGW}. 
In this case property (P1) of $p$ and slow oscillation combined 
yield that three consecutive zeros of $p(t)-1$ determine $\omega_p$, 
and (P2) holds as well. 
Consequently, under condition \eqref{d>}, Theorems \ref{main_new} and \ref{main0} give the existence of a stable slowly oscillating periodic 
solution $q$ of Equation \eqref{eqn:Ef}, and the projected curves 
$t\mapsto (p(t),p(t-1))$ are relatively simple. 

The rigorous, computer-assisted verification of property (P) in Section \ref{CAP} 
is an essential part of this paper. 
Indeed, Section \ref{CAP} developes a rigorous numerical technique 
to verify property (P) for some given $c,d,g$ in Equation \eqref{eqn:Eg}. 
For several examples where  (P) is satisfied, 
 the obtained periodic solution $p$ can be complicated in the sense 
that the projected curves 
$t\mapsto (p(t),p(t-1))$ look complicated, similar to the numerically observed chaotic situations obtained in earlier papers. Notice that the complexity of $p$ implies that of $q$.
We emphasize that the analogous computer-assisted proof in \cite{BKV} 
was much simpler, since the fact $g(\xi)=\xi$, $\xi\in[0,1)$, allowed  
the search for periodic solutions, in local coordinates, in   
the class of functions $s\mapsto \sum_{j=0}^N a_j s^j e^{-cs}$. 
The particular form of $g$ makes it possible to verify property (P) for 
some parameters $d>c>0$. 
Property (P) is the same as hypothesis (H) in \cite{BKV} for the case 
$g(\xi)=\xi$, $\xi\in[0,1)$, and $g(\xi)=0$ for $\xi>1$. 
It is nontrivial to extend the verification of (P) from the piece-wise linear $g$ 
of \cite{BKV} to a general $g$ satisfying (G). The method
of Section \ref{CAP} achives this for a wide class of functions $g$,
such that they can be represented on $(0, 1)$ as a composition 
of simple functions such as arithmetic operators and well know  
functions, such as $\exp$, $\sin$, $\cos$, $\sqrt{\cdot}$, etc. 

Property (P1) implies  $p(t)=e^{-ct}$ for $t\in [0,1]$. The values of 
$p$ on $[0,1]$ determine uniquely $p$ on $[1,\infty)$. Therefore, 
$p$ is unique if it exists. From a solution $x:[0,\infty)\to\R$ of Equation \eqref{eqn:Eg} with 
$x(t)=e^{-ct}$ for $t\in [0,1]$ one easily finds a periodic $p$ with property (P1) if $x(t)>1$ for all $t\in [T,T+1]$ for some $T>1$, see Remark \ref{remark}. 
The meaning of (P2) is that the orbit passes through the hyperplane 
$ \{\phi\in C([-1,0],\R): \phi(0)=1\} $
transversally. 
Property (P2) guarantees that the measure of those times $t\in [0,\omega_p]$ is small, where $p(t)$ is close to the discontinuity point $\xi=1$ of $g$. 
This fact is crucial in the estimation of  the difference between the periodic solution 
of Equation \eqref{eqn:Eg} and a solution of Equation \eqref{eqn:Ef}, 
see Proposition \ref{prop:x-y}. 
We remark that the idea of considering a simple nonlinear feedback function  
and nearby nonlinearities was applied in earlier works in the case when 
the nonlinearity $g$ was piece-wise constant,  see e.g. \cite{BVG}, \cite{UDH}, \cite{KPV}, \cite{K3}, \cite{K4}, \cite{OW}, \cite{SW}, \cite{VAS}, \cite{Wa1}, \cite{Wa2}. 
In the piece-wise constant situation the search for periodic solutions can be reduced to a finite dimensional problem. 
Our case is different since $g$ is not a step function on $[0,1)$,    and  the search for periodic solutions is still an infinite dimensional problem. 
Mackey et al. \cite{MOPW} applied a limiting technique, similar 
to the work and \cite{BKV}, for an equation including the Mackey--Glass nonlinearity $ \frac{\xi}{1+\xi^n} $, and obtained simple looking stable periodic solutions. 

\section{Preliminary results}\label{sec2}
In this section we assume that $b>a>0$, $d>c>0$, and conditions (F) and (G) hold. 

We use the notation $\R$ for the set of all real numbers, $\N$ for the set $\{1,2,\dots\}$, and let $\N_0=\{0\}\cup \N$.

Let $C$ denote the Banach space $C([-1,0],\R)$ with the norm $\|\phi\|=\max_{s\in[-1,0]}|\phi(s)|$. For an interval $I$, $t-1,t\in I$ and a continuous function $u:I\to\R$, 
$u_t\in C$ is given by $u_t(s)=u(t+s)$, $-1\le s\le 0$. 
Introduce the subsets of $C$:
$$
C^+=\{\psi\in C: \psi(s)\ge 0, \ s\in [-1,0]\},\quad 
C^+_*=\{\psi\in C^+: \psi^{-1}(c) \text{ is finite for }c\in (0,1]\}
$$
where $\psi^{-1}(c)=\{s\in[-1,0]: \psi(s)=c\}$. 

A solution of Equation \eqref{eqn:Ef} on $[-1,\infty)$ with initial function $\psi\in C^+$ 
is a continuous function $y=y^\psi:[-1,\infty)\to \mathbb{R} $ so that $y_0=\psi$, the restriction $y|_{(0,\infty)}$ is differentiable, and equation  \eqref{eqn:Ef} holds for all $t>0$. 
The solutions are easily obtained from the variation-of-constants formula for ordinary differential equations on successive intervals of length one, that is
\begin{equation}
	y(t)=e^{-a(t-j)}y(j) +b\int_j^t e^{-a(t-s)}f(y(s-1))\,ds, 
\end{equation}
where $j\in \mathbb{N}_0$, $j\le t\le j+1$. Each $\psi\in C^+$ as an initial function uniquely determines a solution 
$y=y^{\psi}:[-1,\infty)\to \mathbb{R}$ with $y_0^{\psi}=\psi$, and 
$y^{\psi}_t\in C^+$ for all $t\ge 0$. 
It is easy to see that $\psi\in C^+$, $\psi(0)>0$ imply $y^\psi(t)>0$ for all $t>0$. 
The solutions define the continuous semiflow 
$$\Phi:[0,\infty)\times C^+\ni (t,\psi)\mapsto y_t^{\psi}\in C^+.$$

The discontinuity of $g$ requires a slightly different definition of a solution for 
\eqref{eqn:Eg}. A solution  of Equation \eqref{eqn:Eg} with an initial function 
$\phi\in C^+$ is a continuous function $x=x^\phi:[-1,t_\phi)\to\mathbb{R} $ with some $0<t_\phi\le\infty$ such that 
$x_0=\phi$, the map $[0,t_\phi)\ni s\mapsto g(x(s-1))\in\mathbb{R}$ is locally integrable, and 
\begin{equation}
	x(t)=e^{-c(t-j)}x(j) +d\int_j^t e^{-c(t-s)}g(x(s-1))\,ds 
\end{equation}
holds for all $j\in \mathbb{N}_0$ and $t\in [0,t_\phi)$ with $j\le t\le j+1$.
It is not difficult to prove that,  
for any $\phi\in C^+$, there is a unique solution $x^\phi$ of Equation \eqref{eqn:Eg} on $[-1,\infty)$. 
For $u\in (-1,1)$ and for the inititial function $\phi(s)=1+u$,  the solution $x^u$ of Equation \eqref{eqn:Eg} is 
$$ 
x^u(t)=e^{-ct}(1+u) +\dfrac{d}{c}g(1+u)[1-e^{-c}]
\qquad (t\in [0,1],
$$
see \cite{BKV}. If continuity holds on initial functions in $C^+$, 
then
$$
\lim_{u\to 0-}x^u(t) = x^0 (t)=\lim_{u\to 0+}x^u(t) 
\qquad \text{for all } t\in [0,1].  
$$
This is impossible since, by (G),  
$\lim_{u\to 0-}g(1+u)>0$, $\lim_{u\to 0+}g(1+u)=0$. 
Consequently, in $C^+$ there is no continuous dependence on initial functions 
for  Equation \eqref{eqn:Eg}. 
This fact motivates the introduction of $C^+_*$ as a phase space for equation \eqref{eqn:Eg}.

The proof of the following result requires only a straightforward modification  of the 
proof of Proposition 2.2 in \cite{BKV}, 
therefore it is omitted here. 

\begin{proposition}\label{prop:exist}
	For all $\phi\in C^+_*$, Equation \eqref{eqn:Eg} has a unique solution $x^\phi$ on $[-1,\infty)$ with  $x_t^\phi\in C^+_*$ for all $t\ge 0$. If $t>0$ and $x^\phi(t-1)\ne 1$, then $x^\phi$ is differentiable at $t$, and equation \eqref{eqn:Eg} holds at $t$. 
	The map $\Gamma:[0,\infty)\times C^+_*\ni (t,\phi)\mapsto x_t^\phi \in C^+_*$ is a continuous semiflow. 
\end{proposition}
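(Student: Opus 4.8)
The plan is to construct $x^\phi$ by the method of steps from the integrated form of \eqref{eqn:Eg}, to propagate the membership $x_t^\phi\in C^+_*$ by induction over unit intervals, and then to deduce the semiflow properties --- the algebraic ones from uniqueness, continuity from a dominated convergence argument that is available precisely because level sets stay finite. On $[0,1]$ the feedback is $s\mapsto g(\phi(s-1))$: since $\phi\in C^+_*$ the set $\phi^{-1}(1)$ is finite and divides $[-1,0]$ into finitely many intervals on each of which $\phi$ is either $>1$ or $<1$, and since $g$ is bounded by $1$ and continuous on $[0,\infty)\setminus\{1\}$, this map is bounded with finitely many discontinuities, hence locally integrable; thus the right-hand side of the variation-of-constants identity defines a unique continuous $x=x^\phi$ on $[0,1]$, with $x\ge0$ there because $\phi(0)\ge0$ and the integrand is nonnegative. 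The crucial point is that $x_1\in C^+_*$. Fix $c\in(0,1]$ and suppose $\{t\in[0,1]:x(t)=c\}$ had an accumulation point $t^\ast$; away from the finitely many points where $\phi(t-1)=1$ the function $y:=x-c$ is $C^1$ and solves $y'+cy=R$ with $R(t):=dg(\phi(t-1))-c^2$ continuous, so $(e^{ct}y)'=e^{ct}R$, and between two consecutive zeros of $y$ the integral of $e^{ct}R$ vanishes, which forces $R$ to have a zero in between. Hence $g(\phi(\cdot-1))=c^2/d$ at a sequence of distinct points converging to $t^\ast$; on a piece where $\phi>1$ this is impossible, while on a piece where $\phi<1$ the strict monotonicity of $g$ on $(0,1)$ forces $\phi=g^{-1}(c^2/d)\in(0,1]$ at infinitely many points, contradicting $\phi\in C^+_*$. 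So every level set of $x_1$ over $(0,1]$ is finite.

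Assuming $x_j^\phi\in C^+_*$, the same argument on $[j,j+1]$ --- with the finiteness of $\{\tau\in[j-1,j]:x^\phi(\tau)=1\}$ playing the role of that of $\phi^{-1}(1)$ --- extends $x^\phi$ uniquely to $[-1,\infty)$ with $x^\phi\ge0$ and $x_t^\phi\in C^+_*$ for all $t\ge0$. If $t>0$ and $x^\phi(t-1)\ne1$, then by continuity $x^\phi(\cdot-1)$ stays away from $1$ near $t$, where $g$ is continuous, so $s\mapsto g(x^\phi(s-1))$ is continuous near $t$ and the integrated equation shows $x^\phi$ is $C^1$ at $t$ with \eqref{eqn:Eg} holding pointwise. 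For the semiflow, $\Gamma(0,\cdot)=\mathrm{id}$ and $\Gamma(t+s,\phi)=\Gamma(t,\Gamma(s,\phi))$ follow from uniqueness of solutions. For continuity, let $\phi_n\to\phi$ in $C$ with all terms in $C^+_*$: then $\phi_n(0)\to\phi(0)$, and since $\phi(s-1)\ne1$ for a.e.\ $s$ while $g$ is bounded and continuous off $1$, dominated convergence gives $\int_0^t e^{-c(t-s)}g(\phi_n(s-1))\,ds\to\int_0^t e^{-c(t-s)}g(\phi(s-1))\,ds$ uniformly in $t\in[0,1]$, so $x_t^{\phi_n}\to x_t^\phi$ uniformly on $[0,1]$; since $x_1^\phi\in C^+_*$ makes $\{t\in[0,1]:x^\phi(t)=1\}$ finite, the same step carries this uniform convergence to $[1,2]$ and, inductively, to every $[j,j+1]$, which together with continuity of $t\mapsto x_t^\phi$ yields joint continuity of $\Gamma$.

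The step I expect to be the main obstacle is the preservation of $C^+_*$, i.e.\ showing that the $c$-level sets of $x_t^\phi$ stay finite. This is what makes the whole scheme self-consistent: it is needed both to keep the feedback locally integrable at each step of the induction and to make the dominated convergence argument for continuity go through, and it is precisely why $C^+_*$, rather than $C^+$, is the correct phase space for \eqref{eqn:Eg}.
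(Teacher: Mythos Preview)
The paper omits this proof, deferring to a straightforward modification of Proposition~2.2 in \cite{BKV}; your method-of-steps construction, the Rolle-type argument for propagating finiteness of level sets, and the dominated-convergence step for continuity are precisely the natural route and almost certainly what \cite{BKV} does (there with the linear $g(\xi)=\xi$, here with $g^{-1}$ in its place).

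There is, however, a genuine slip in your level-set computation: you have overloaded the symbol $c$, using it simultaneously for the decay parameter in \eqref{eqn:Eg} and for the level value in $(0,1]$. As written, $y'+cy=R$ would mix the two and the identity fails. Renaming the level value to $\alpha\in(0,1]$, the correct line is
\[
y'+cy=R,\qquad R(t)=dg(\phi(t-1))-c\alpha,
\]
so that $(e^{ct}y)'=e^{ct}R$ with the \emph{parameter} $c$ in the exponent. A zero of $R$ on a piece where $\phi<1$ then forces $g(\phi(t-1))=c\alpha/d>0$, hence $\phi(t-1)=g^{-1}(c\alpha/d)\in(0,1)$ provided $c\alpha/d$ lies in the range of $g|_{[0,1)}$; if it does not, $R$ has no zeros at all on that piece, and on pieces where $\phi>1$ one has $R\equiv -c\alpha<0$. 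Either way $R$ has only finitely many zeros (by $\phi\in C^+_*$), so $e^{ct}y$ is piecewise monotone on finitely many pieces and $y$ has finitely many zeros. With this correction the remainder of your sketch --- differentiability off $\{x(t-1)=1\}$, the cocycle identity from uniqueness, and joint continuity via dominated convergence together with the uniform Lipschitz bound coming from $|x'|\le c\,x+d$ --- goes through as you wrote it.
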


We remark that, for $\psi\in C^+$ and $\phi\in C^+_*$, the unique solutions $y^{\psi}$ and 
$x^{\phi}$  of \eqref{eqn:Ef} and \eqref{eqn:Eg} satisfy the integral equations  
\begin{equation}\label{inteq-en}
	y^{\psi}(t)=e^{-a(t-\tau)}y^{\psi}(\tau) +b\int_{\tau}^t e^{-a(t-s)}f(y^{\psi}(s-1))\,ds  
	\qquad (0\le \tau< t<\infty)
\end{equation}
and
\begin{equation}\label{inteq-ei}
	x^\phi(t)=e^{-c(t-\tau)}x^\phi(\tau) +d\int_{\tau}^t e^{-c(t-s)}g(x^\phi(s-1))\,ds 
	\qquad  (0\le \tau< t<\infty),
\end{equation}
respectively.

The following boundedness result is analogous to Proposition 2.3 in  \cite{BKV}, and its proof is omitted:

\begin{proposition}\label{prop-bound}
	Let $\phi\in C_*^+$, $\psi \in C^+$, and let $x=x^\phi$, $y=y^\psi$ be the solutions of Equations \eqref{eqn:Eg} and \eqref{eqn:Ef}, respectively. 
	\begin{itemize}
		\item[(i)]  There exists $t_*(\phi)\ge 0$ so that $x(t)<d/c$ for all $t>t_*(\phi)$.  If $\phi(0)<d/c$ then $x(t)<d/c$ for all $t\geq 0$.
		\item[(ii)] There exists $t_{**}(\psi)\geq 0$ so that $y(t)<b/a$ for all $t>t_{**}(\psi)$. If  $\psi(0)<b/a$ then $y(t)<b/a$ for all $t\geq 0$.
	\end{itemize}
\end{proposition}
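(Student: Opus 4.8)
The plan is to read everything off the integral equations \eqref{inteq-ei} and \eqref{inteq-en} together with the uniform bounds $0\le g\le 1$ and $0\le f\le 1$ supplied by (G) and (F). First I would record an a priori estimate: applying \eqref{inteq-ei} with $g\le 1$ gives, for all $0\le\tau<t<\infty$,
\[
x(t)\le e^{-c(t-\tau)}x(\tau)+d\int_\tau^t e^{-c(t-s)}\,ds=e^{-c(t-\tau)}\left(x(\tau)-\tfrac{d}{c}\right)+\tfrac{d}{c},
\]
and likewise $y(t)\le e^{-a(t-\tau)}\left(y(\tau)-\tfrac{b}{a}\right)+\tfrac{b}{a}$ from \eqref{inteq-en} with $f\le 1$. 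Taking $\tau=0$ and using $x(0)=\phi(0)<d/c$ (resp.\ $y(0)=\psi(0)<b/a$) at once yields $x(t)<d/c$ (resp.\ $y(t)<b/a$) for all $t\ge 0$, which settles the second halves of (i) and (ii). The same estimate, read as a convex combination, shows $x(t)\le R_\phi:=\max\{\phi(0),d/c\}$ and $y(t)\le R_\psi:=\max\{\psi(0),b/a\}$ for all $t\ge 0$; I will use this global boundedness below.

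For the first half of (i) I would argue by contradiction: suppose $x(t)\ge d/c$ for every $t\ge 0$. Since $d>c$ we have $d/c>1$, so $x(s-1)>1$ for all $s\ge 1$, and then (G) gives $g(x(s-1))=0$ for all $s\ge 1$. Substituting into \eqref{inteq-ei} with $\tau=1$ gives $x(t)=e^{-c(t-1)}x(1)$ for all $t\ge 1$, hence $x(t)\to 0$, contradicting $x(t)\ge d/c>0$. Therefore $x(t_0)<d/c$ for some $t_0\ge 0$, and the a priori estimate with $\tau=t_0$ propagates this strict inequality to all $t\ge t_0$; so $t_*(\phi):=t_0$ works.

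The first half of (ii) has the same skeleton, but because (F) only gives $f(\xi)<1$ for $\xi>1$ (not $f\equiv 0$), the "decay to $0$" step is replaced by a "decay below $b/a$" step. Assuming $y(t)\ge b/a$ for all $t\ge 0$, the global bound above yields $y(s-1)\in[b/a,R_\psi]\subset(1,\infty)$ for all $s\ge 1$; on this compact set the continuous $f$ is strictly below $1$, so $\mu:=\max_{[b/a,R_\psi]}f<1$. Then \eqref{inteq-en} with $\tau=1$ gives $y(t)\le e^{-a(t-1)}y(1)+\tfrac{b}{a}\mu\bigl(1-e^{-a(t-1)}\bigr)$, so $\limsup_{t\to\infty}y(t)\le \tfrac{b\mu}{a}<\tfrac{b}{a}$, a contradiction; the conclusion then follows exactly as before with $t_{**}(\psi):=t_0$.

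The only delicate point — and the reason the exponential a priori estimate is not sufficient on its own for the first halves — is that $x(t)\le e^{-c(t-\tau)}(x(\tau)-d/c)+d/c$ does not preclude $x(t)$ tending to $d/c$ from above while never dropping strictly below it; it is precisely the collapse $g\equiv 0$ on $(1,\infty)$ together with $d/c>1$ (respectively $f<1$ on $(1,\infty)$ together with $b/a>1$) that forces the orbit strictly under the threshold at some finite time, after which the a priori estimate keeps it there. Everything else is a routine transcription of the proof of Proposition~2.3 in \cite{BKV}.
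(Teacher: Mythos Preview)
Your proof is correct. The paper omits its own proof entirely, pointing to Proposition~2.3 in \cite{BKV}, and your argument is exactly the standard one that reference has in mind: the exponential a priori bound from the integral equation plus the range constraints $0\le f,g\le 1$, with the additional observation that $g\equiv 0$ on $(1,\infty)$ (resp.\ $f<1$ on a compact subset of $(1,\infty)$) forces the solution strictly below the threshold at some finite time.
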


Let $g^{-1}$ denote the inverse of $g|_{[0,1)}$. 
By (G), $g^{-1}$ is strictly increasing, continuous, $g^{-1}(0)=0$. Define 
$$\xi_0=g^{-1}\left(\frac{c}{d}\right)\in (0,1).$$ 

\begin{proposition}\label{prop:mu}
	For each $\delta_0\in(0,\min\{\xi_0,1-\xi_0\})$ there exist 
	$\delta_1\in (0,\delta_0)$ and  $\mu>0$
	such that 
	$$
	|-cu+dg(v)|\ge \mu \text{ for all }
	(u,v)\in \left[ 1-\delta_1,1+\delta_1\right] \times \left(\left[0,\xi_0-\delta_0\right]\cup \left[\xi_0+\delta_0,\infty \right)  \right).
	$$
\end{proposition}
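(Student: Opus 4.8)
The plan is to treat this as an elementary monotone estimate rather than a compactness argument. By (G), the map $(u,v)\mapsto -cu+dg(v)$ has, among all $(u,v)$ with $u$ close to $1$, its only zero at $(1,\xi_0)$, where $\xi_0=g^{-1}(c/d)\in(0,1)$; once a $\delta_0$–neighbourhood of $\xi_0$ is deleted from the $v$–range, strict monotonicity of $g$ on $(0,1)$ keeps $dg(v)$ at a fixed positive distance from $c$, and then it suffices to let $u$ vary over a small enough neighbourhood of $1$.

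Concretely, I would first record the ``gaps''. Since $\delta_0<\min\{\xi_0,1-\xi_0\}$ we have $0<\xi_0-\delta_0<\xi_0<\xi_0+\delta_0<1$, and as $g$ is continuous and strictly increasing on $(0,1)$ with $g(\xi_0)=c/d$, the numbers
$\rho_-:=c-dg(\xi_0-\delta_0)$ and $\rho_+:=dg(\xi_0+\delta_0)-c$ are \emph{strictly} positive. Moreover, by monotonicity of $g$ on $(0,1)$ together with $g(0)=0$, one has $g(v)\le g(\xi_0-\delta_0)$ for all $v\in[0,\xi_0-\delta_0]$, $g(v)\ge g(\xi_0+\delta_0)$ for all $v\in[\xi_0+\delta_0,1)$, and $g(v)=0$ for all $v>1$. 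Then I would set $\rho:=\min\{\rho_-,\rho_+\}$, choose $\delta_1\in(0,\delta_0)$ small enough that $c\delta_1<\rho/2$ and $\delta_1<1/2$, and put $\mu:=\min\{\rho/2,\,c/2\}>0$.

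The conclusion follows by splitting the $v$–range, with $u\in[1-\delta_1,1+\delta_1]$ throughout. For $v\in[0,\xi_0-\delta_0]$: $-cu+dg(v)\le -c(1-\delta_1)+(c-\rho_-)=c\delta_1-\rho_-\le \rho/2-\rho<-\mu$. For $v\in[\xi_0+\delta_0,1)$: $-cu+dg(v)\ge -c(1+\delta_1)+(c+\rho_+)=\rho_+-c\delta_1\ge \rho-\rho/2>\mu$. For $v\ge 1$, where $g(v)=0$ by (G) (the immaterial value $g(1)$ being taken as $0$): $-cu+dg(v)=-cu\le -c(1-\delta_1)\le -c/2\le -\mu$. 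In all cases $|-cu+dg(v)|\ge\mu$, which is the assertion.

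I do not expect a genuine obstacle here; the argument uses no compactness, only the one–dimensional structure in $v$. The single point that needs attention is that the inequalities $g(\xi_0-\delta_0)<c/d<g(\xi_0+\delta_0)$ are \emph{strict} — this is exactly where the hypothesis $g'>0$ on $(0,1)$ is used — and the only bit of bookkeeping is the half–line $v>1$, where $g\equiv 0$ and the expression degenerates to $-cu$, handled as above.
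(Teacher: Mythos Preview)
Your argument is correct and follows essentially the same route as the paper's proof: first bound $|-c+dg(v)|$ away from zero on the $v$--range (the paper asserts $|-c+dg(v)|\ge 2\mu$; you compute this gap explicitly as $\rho=\min\{\rho_-,\rho_+\}$), and then perturb $u$ in a small interval around $1$ (the paper says ``by continuity''; you quantify this via $c\delta_1<\rho/2$). Your version is simply a fleshed-out form of the paper's two-sentence sketch, with the added case split at $v\ge 1$ made explicit.
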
 

\begin{proof} From condition (G), $d>c>0$, and the choice of $\xi_0$, 
	there exists a $\mu>0$ so that 
	$|-c+dg(v)|\ge 2\mu $ for all $v\in \left[0,\xi_0-\delta_0\right]\cup \left[\xi_0+\delta_0,\infty \right)$. 
	By continuity, it is easy to see that the stated inequality holds if $\delta_1$ is sufficiently small.  
\end{proof}

\begin{proposition}\label{prop:y-z} 
	Let $\delta>0$,   $\ell\in \N_0$, 
	$0<\nu_1<1$, $1+\delta<\nu_2$, and $\psi, \chi \in C^+$ 
	be given such that for the solutions  $y=y^\psi$ and $z=z^\chi$  of \eqref{eqn:Ef} the relations
	$$
	\psi(s)\in [1+\delta,\nu_2],\ \chi(s)\in [1+\delta,\nu_2] \text{ for all }s\in [-1,0], 
	$$
	$$
	y(t)\in [\nu_1,\nu_2], \ z(t)\in [\nu_1,\nu_2] \text{ for all }t\in [0,\ell]
	$$ 
	hold. 
	Then,  for all $t\in[0,\ell+1]$, 
	\begin{equation}\label{ineq:y-z}
		|y(t)-z(t)|\leq \left(|\psi(0)-\chi(0)|+b\|f'\|_{[1+\delta,\nu_2]}\|\psi-\chi\|\right)\left[1+b\|f'\|_{[\nu_1,\nu_2]}\right]^\ell.
	\end{equation} 
\end{proposition}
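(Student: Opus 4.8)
The plan is to compare $y$ and $z$ on successive unit intervals using the variation-of-constants formula \eqref{inteq-en} and a discrete Gronwall argument. First I would set $D_k = \sup_{t\in[0,k]}|y(t)-z(t)|$ for $k\in\{0,1,\dots,\ell+1\}$, noting $D_0 \le \|\psi-\chi\|$. The key observation is that the relevant values of the delayed arguments $y(s-1)$ and $z(s-1)$ always lie in a set on which $f'$ is controlled: for $s\in[0,1]$ the arguments $y(s-1)=\psi(s-1)$ and $z(s-1)=\chi(s-1)$ lie in $[1+\delta,\nu_2]$ by hypothesis, and for $s\in[k,k+1]$ with $1\le k\le \ell$ they lie in $[\nu_1,\nu_2]$. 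So on the first interval I would use the Lipschitz bound $|f(\psi(s-1))-f(\chi(s-1))| \le \|f'\|_{[1+\delta,\nu_2]}\,|\psi(s-1)-\chi(s-1)| \le \|f'\|_{[1+\delta,\nu_2]}\,\|\psi-\chi\|$, while on later intervals I would use $|f(y(s-1))-f(z(s-1))| \le \|f'\|_{[\nu_1,\nu_2]}\,|y(s-1)-z(s-1)|$.

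Next I would carry out the recursion. On $[0,1]$, from \eqref{inteq-en} with $\tau=0$,
\[
|y(t)-z(t)| \le |\psi(0)-\chi(0)| + b\int_0^t e^{-a(t-s)}\|f'\|_{[1+\delta,\nu_2]}\|\psi-\chi\|\,ds \le |\psi(0)-\chi(0)| + b\|f'\|_{[1+\delta,\nu_2]}\|\psi-\chi\|,
\]
using $\int_0^t e^{-a(t-s)}\,ds \le 1$ (since $a>0$). This gives the bound \eqref{ineq:y-z} for $\ell=0$ and establishes the base case. For the inductive step, suppose the bound holds at time $k$ (with exponent $k-1$ in the bracket, or just $D_k \le R[1+b\|f'\|_{[\nu_1,\nu_2]}]^{k-1}$ where $R$ is the first factor). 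On $[k,k+1]$, applying \eqref{inteq-en} with $\tau=k$,
\[
|y(t)-z(t)| \le |y(k)-z(k)| + b\int_k^t e^{-a(t-s)}\|f'\|_{[\nu_1,\nu_2]}\,|y(s-1)-z(s-1)|\,ds \le D_k + b\|f'\|_{[\nu_1,\nu_2]}\,D_k,
\]
again bounding the integral kernel by $1$ and noting $s-1\in[k-1,k]\subset[0,\ell]$ so $|y(s-1)-z(s-1)|\le D_k$. Hence $D_{k+1} \le D_k\,[1+b\|f'\|_{[\nu_1,\nu_2]}]$, and iterating from $k=1$ up to $\ell$ yields $D_{\ell+1} \le D_1\,[1+b\|f'\|_{[\nu_1,\nu_2]}]^{\ell} \le R\,[1+b\|f'\|_{[\nu_1,\nu_2]}]^{\ell}$, which is exactly \eqref{ineq:y-z}.

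I do not expect any serious obstacle here; the argument is a routine Gronwall-type estimate. The only points requiring a little care are bookkeeping: making sure the delayed argument $s-1$ on the interval $[k,k+1]$ indeed falls in $[0,\ell]$ (which needs $k\le\ell$, consistent with the hypothesis that the range containment $y(t),z(t)\in[\nu_1,\nu_2]$ is assumed precisely for $t\in[0,\ell]$), and keeping the exponent count correct so that the single factor coming from the first interval is absorbed into the first bracketed term rather than producing an extra power. One should also check that $\nu_1<1<1+\delta<\nu_2$ guarantees $[1+\delta,\nu_2]\subset[\nu_1,\nu_2]$, so that the first-interval constant $\|f'\|_{[1+\delta,\nu_2]}$ could in principle be replaced by the larger $\|f'\|_{[\nu_1,\nu_2]}$, but keeping the sharper constant as stated costs nothing.
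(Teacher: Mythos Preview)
Your proposal is correct and follows essentially the same approach as the paper's proof: both use the variation-of-constants formula \eqref{inteq-en} on successive unit intervals, apply the mean value theorem to control $|f(y(s-1))-f(z(s-1))|$ by $\|f'\|$ on the appropriate range, bound $\int e^{-a(t-s)}\,ds\le 1$, and complete the argument by induction. The only cosmetic difference is that you package the running bound as $D_k=\sup_{[0,k]}|y-z|$ whereas the paper phrases the induction directly in terms of $\|y_k-z_k\|$.
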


\begin{proof} 
	From the integral equation \eqref{inteq-en} for $y=y^\psi$ and $z=z^\chi$ 
	with $\tau=0$ and $t\in [0,1]$   we have 	
	\begin{align*}
		\left|y(t)-z(t)\right|& \le e^{-at}|\psi(0)-\chi(0)| + b\sup_{1+\delta\le\xi\le\nu_2}|f'(\xi)|
		\int_0^t e^{-a(t-s)}|y(s-1)-z(s-1)|\,ds\\
		& \leq |\psi(0)-\chi(0)| + b\|f'\|_{[1+\delta,\nu_2]}
		\left\| \psi-\chi\right\|,
	\end{align*}
	that is, \eqref{ineq:y-z} holds for $\ell=0$.  
	Suppose that $k\in\N$  and for all $t\in [0,k]$ the 
	inequality
	$$|y(t)-z(t)|\le \left(\left|\psi(0)-\chi(0)\right|+b\|f'\|_{[1+\delta,\nu_2]}\left\|\psi-\chi \right\| \right)\left[1+b\|f'\|_{[\nu_1,\nu_2]}\right]^{k-1}$$ 
	is valid. 
	Then 
	\begin{equation}\label{ineqk:y-z}
		\|y_k-z_k\|\le \left(\left|\psi(0)-\chi(0)\right|+b\|f'\|_{[1+\delta,\nu_2]}\left\|\psi-\chi \right\| \right)
		\left[1+b\|f'\|_{[\nu_1,\nu_2]}\right]^{k-1}
	\end{equation}  
	holds as well. 	
	By using \eqref{ineqk:y-z}, from the integral equation \eqref{inteq-en}
	for $y$ and $z$ with $\tau=k$ and $t\in [k,k+1]$, we obtain   
	\begin{align*}
		|y(t)-z(t)|\le 
		& e^{-a(t-k)}|y(k)-z(k)| + b\sup_{\xi\in[\nu_1,\nu_2]}|f'(\xi)|
		\int_k^t e^{-a(t-s)} |y(s-1)-z(s-1)|\,ds\\
		\le 
		& \left[1+b\|f'\|_{[\nu_1,\nu_2]}\right] \|y_k-z_k\| \\
		\le 
		& \left(\left|\psi(0)-\chi(0)\right|+b\|f'\|_{[1+\delta,\nu_2]}\left\|\psi-\chi \right\| \right)\left[1+b\|f'\|_{[\nu_1,\nu_2]}\right]^{k}.
	\end{align*} 
	Thus, by the principle of induction, inequality \eqref{ineqk:y-z} is satisfied for all 
	$k\in\{1,\ldots,\ell +1\}$, and the proof is complete. 
\end{proof}

Suppose that the following property holds for Equation \eqref{eqn:Eg}:  
\begin{enumerate}
	\item[(P)] The parameters  $d>c>0$ and the function $g$, satisfying (G),  are given such that Equation \eqref{eqn:Eg} has a periodic solution $p:\mathbb{R}\to (0,\infty)$ with minimal period $\omega_p>0$, and  
	\begin{itemize}
		\item[(P1)] $p(0)=1 $, $p(t)>1$ for all $t\in [-1,0)$; 
		\item[(P2)] $(p(t),p(t-1))\ne (1,\xi_0)$ for all $t\in[0,\omega_p]$. 
	\end{itemize}
\end{enumerate}

\begin{rem}\label{remark} 
	Assume that (P) is satisfied.
	\begin{itemize}
		\item[1.] By Equation \eqref{eqn:Eg}, $p(t)=e^{-ct}$, $t\in[0,1]$. Clearly, $p_1\in C^+_*$, and then by Proposition \ref{prop:exist} and periodicity, 
		$p_t\in C^+_*$ for all $t\in\R$. In addition, 
		$\omega_p>2$ and $p$ is unique. 
		\item[2.] For given $d>c>0$ and $g$ satisfying (G), we can look for $p$ 
		in the following way.   Let $\phi(s)=e^{-c(1+s)}$, $s\in[-1,0]$, and 
		define  $x:[0,\infty)\to  \R$ by $x(t)=\Gamma(t,\phi)(-1)$. That is, 
		$x$ is a solution of \eqref{eqn:Eg} on $[0,\infty)$ with initial function $x(t)=e^{-ct}$, $t\in [0,1]$. 
		If there exists an $\omega>1$ so that $x(\omega)=1$ and $x(\omega+s)>1$ 
		for all $s\in[-1,0)$, then clearly $x(\omega+t)=e^{-ct}$ for all $t\in [0,1]$, i.e., $x_{\omega+1}=x_1$. If $\omega $ is minimal with the above property then $p$ is the $\omega$-periodic extension of 
		$x|_{[0,\omega]}$, and $\omega_p=\omega$.  
		\item[3.] By the above remark, there is no  $\omega\in (0,\omega_p)$ such that $p(\omega)=1$ and $p(\omega +s)>1$ for all $s\in[-1,0)$.
		\item[4.] Clearly, $p(t)\ge e^{-ct}$ for all $t\ge0$, and, by Proposition \ref{prop-bound}, $p(t)<d/c$ for all $t\ge 0$. 
		Therefore, $p(t)\in (e^{-c\omega_p}, d/c)$ for all $t\in\R$.  
	\end{itemize}
\end{rem}

Now we define several constants related to (P) and used in the sequel. 
Let 
$$
p_m=\min_{t\in\R }p(t),\quad p_M=\max_{t\in\R}p(t). 
$$
It is easy to see that  $0<e^{-c\omega_p}<p_m<1<p_M<d/c$. 
Choose $\kappa_1,\kappa_2$ so that 
$$
0<\kappa_1<p_m< p_M<\kappa_2.
$$
Let 
$$
m=\left[ \omega_p\right] 
$$
be the integer part of $\omega_p$. Then  $m\le \omega_p< m+1$. 

Observe that $p_0\in C^+_*$, and then $p_t\in C_*^+$ for all $t\ge 0$ by Proposition \ref{prop:exist}. This fact allows to define $k_0\in\N $ 
as the number of points in the set 
$$
\{t\in(0,\omega_p]: \, p(t)=1\}.
$$
By (P2) and continuity, a $\delta_0\in (0,\min\{\xi_0,1-\xi_0\})$ can be fixed so that 
$$
(p(t),p(t-1))\notin \left[ 1-\delta_0,1+\delta_0\right] 
\times\left[ \xi_0-\delta_0,\xi_0+\delta_0\right]
$$
for all $t\in\R$. 
By Proposition \ref{prop:mu} there exist 
$\delta_1\in (0,\delta_0)$ and $\mu>0$ such that 
$$
|-cu+dg(v)|\ge \mu \text{ for all }
(u,v)\in \left[ 1-\delta_1,1+\delta_1\right] \times 
\left(
\left[ 
0,\xi_0-\delta_0\right] \cup \left[ \xi_0+\delta_0, \infty\right)  \right).
$$  
Consequently, if $t\in\R$ and $p'(t)$ exists, then 
\begin{equation}\label{p'large}
	|p(t)-1|\le\delta_1 
	\text{ implies }|p'(t)|\ge\mu.
\end{equation} 

Define the constants 
$$
k_1=\dfrac{2(3k_0+2)}{\mu},\quad 
k_2=2+\dfrac{d}{c}+\left( d+\delta_1\right)\left( 2+2k_1 
+\|g'\|_{[\kappa_1,1)}\right).
$$
Set 
$$
\delta_2=\frac{1}{k_2^m}
\min\left\lbrace \frac{\delta_1}{2},p_m-\kappa_1, \kappa_2-p_M
\right\rbrace. 
$$
Property (P1) and continuity of $p$ imply the existence of a $\gamma\in (0,m+1-\omega_p)$ so that 
$$
p(t)>1 \text{ for all } t\in[-1-\gamma,0).
$$
Choose the  positive constants $ \varepsilon_0$ and $ \varepsilon_1 $ so that
$$\varepsilon_0\le
\min\left\lbrace 
k_2^m\delta_2, \dfrac{c}{2},
\frac{e^{c\gamma}-1}{2}, \frac{1}{2}\left( \min_{t\in[-1-\gamma,-\gamma]}p(t)-1\right) 
\right\rbrace 
\text{ and } \varepsilon_1=\frac{\varepsilon_0}{k_2^m}.
$$

The constants
\begin{equation}\label{sigma}
	\sigma_0=\frac{1}{c}\log \frac{1+2\varepsilon_0}{1+\varepsilon_0}, \quad 
	\sigma_1=\frac{1}{c}\log(1+\varepsilon_0)
\end{equation}
satisfy 
$\sigma_0+\sigma_1=(1/c)\log (1+2\varepsilon_0)\leq \gamma<m+1-\omega_p$ because of the choice of 
$\varepsilon_0$. 
Observe that $p(t)>1$, $t\in [-1-\gamma,0)$, $p(0)=1$ imply 
$p(t)=e^{-ct}$ for all $t\in [-\gamma, 1]$. 
It follows that
$$
p(-\sigma_0-\sigma_1)=1+2\varepsilon_0, \quad p(-\sigma_1)=1+\varepsilon_0.
$$ 
Let us define the shifted version of $p$ by 
$$
r:\mathbb{R}\ni t\mapsto p(t-\sigma_1)\in\mathbb{R}.
$$
Then $r$ is an $\omega_p$-periodic solution of \eqref{eqn:Eg} satisfying 
\begin{equation}\label{r-properties}
	\begin{split}
		r(t)&= (1+\varepsilon_0)e^{-ct}\quad \text{ for all } t\in[-\sigma_0,1+\sigma_1],\\
		r(-\sigma_0)&=1+2\varepsilon_0, \quad r(0)=1+\varepsilon_0, \quad r(\sigma_1)=1,\\
		r(t)&\geq 1+2\varepsilon_0  \quad \text{ for all } t\in[-1-\sigma_0,-\sigma_0].
	\end{split}
\end{equation}

For $\delta\in (0,\delta_1)$ let 
$$
\Delta(\delta)=\left\lbrace t\in [0,m]:\, |r(t)-1|<\delta\right\rbrace .
$$ 
By the continuity of $r$, $\Delta(\delta)$ is an open set in $[0,m]$, and 
$\Delta(\delta)\cap (0,m)$ is the 
disjoint union of at most countable many open intervals. 
Let $|\Delta(\delta)|$ denote  the  sum of the lengths of these disjoint open intervals.	

The next result shows that the measure of the set of those times $t$	
for which $r(t)$ is close to the discontinuity $\xi=1$ of $g$ is small. 

\begin{proposition}\label{prop:delta} 
	Suppose that (P) is satisfied. 
	Then, for all $\delta\in (0,\delta_1)$, 
	the inequality 
	\begin{equation}\label{Kdelta}
		|\Delta(\delta)|\le k_1\delta
	\end{equation} 
	holds. 
\end{proposition}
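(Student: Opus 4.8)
The plan is to exploit that on $\Delta(\delta)$ the shifted periodic solution $r$ runs through the thin band $(1-\delta,1+\delta)$ with speed bounded below by $\mu$, and then to count how many such fast traversals can occur over $[0,m]$. First I would record the differentiability structure of $r$. Since $r(t)=p(t-\sigma_1)$ and $p$ solves \eqref{eqn:Eg}, Proposition~\ref{prop:exist} together with $p_t\in C^+_*$ implies that the set
\[
S=\{t\in[0,m]:\ r(t-1)=1\}
\]
is finite, that $r$ is differentiable on $[0,m]\setminus S$ with $r'(t)=-cr(t)+dg(r(t-1))$, and that on any open subinterval $J$ of $[0,m]$ disjoint from $S$ the map $r(\cdot-1)$ never equals $1$, hence stays on one side of $1$, so that $g(r(\cdot-1))$ — and therefore $r'$ — is continuous on $J$. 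Applying \eqref{p'large} to $p$ at the point $t-\sigma_1$ gives, for $t\in[0,m]\setminus S$, that $|r(t)-1|\le\delta_1$ implies $|r'(t)|\ge\mu$; since $\delta<\delta_1$ this inequality holds throughout $\Delta(\delta)\setminus S$.

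Next I would bound the length of each monotone piece. Write $\Delta(\delta)$ as the disjoint union of its connected components and delete from each of them the finitely many points of $S$ it contains; this presents $\Delta(\delta)\setminus S$ as a finite disjoint union of open intervals. On such an interval $J$, $r'$ is continuous with $|r'|\ge\mu>0$, so $r'$ has constant sign and $r$ is strictly monotone on $J$; since $r(J)\subset(1-\delta,1+\delta)$, integrating $|r'|\ge\mu$ over $J$ yields $\mu\,|J|\le 2\delta$, i.e. $|J|\le 2\delta/\mu$. Hence
\[
|\Delta(\delta)|\le \frac{2\delta}{\mu}\,\mathcal N,\qquad
\mathcal N:=(\#\text{ components of }\Delta(\delta))+|S\cap\Delta(\delta)| ,
\]
and it remains to show $\mathcal N\le 3k_0+2$.

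For this I would classify the components of $\Delta(\delta)$. Type (a): those containing a point with $r(t)=1$; since the number of zeros of $r-1$ in $[0,m]$ is at most $k_0$ (by $m\le\omega_p$, the $\omega_p$-periodicity of $p$, and the definition of $k_0$), there are at most $k_0$ of these. Type (b): components not meeting $t=0$ or $t=m$ on which $r-1$ keeps a fixed sign; for such a component $(a,b)$ one has $r(a)=r(b)=1-\delta$ and $1-\delta<r<1$ in between (or the symmetric picture above $1$), so $r$ attains an interior local extremum with value in $(1-\delta,1+\delta)\subset(1-\delta_1,1+\delta_1)$; at that point $r$ cannot be differentiable, since there $|r'|\ge\mu>0$, so it lies in $S\cap\Delta(\delta)$, and distinct components yield distinct such points, giving at most $|S\cap\Delta(\delta)|\le|S|\le k_0$ of these. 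Type (c): the at most two components abutting $t=0$ or $t=m$. Thus the number of components is at most $2k_0+2$, whence $\mathcal N\le(2k_0+2)+|S\cap\Delta(\delta)|\le 3k_0+2$, and
\[
|\Delta(\delta)|\le\frac{2\delta}{\mu}(3k_0+2)=\frac{2(3k_0+2)}{\mu}\,\delta=k_1\delta .
\]

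The main obstacle is the last step: the bookkeeping for the components on which $r-1$ keeps a fixed sign (using that a lower bound $|r'|\ge\mu$ forbids interior critical points inside the band, so any such component must ``hide'' a point of $S$) and for the at most two components touching the endpoints of $[0,m]$ — this is exactly where the precise form of the constant $k_1$ comes from. Everything else — the differentiability of $r$ off $S$, the speed bound $|r'|\ge\mu$ near level $1$, and the length estimate $|J|\le 2\delta/\mu$ for a monotone piece — is routine once \eqref{p'large} and Proposition~\ref{prop:exist} are invoked.
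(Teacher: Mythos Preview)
Your proof is correct and follows essentially the same strategy as the paper: remove the finitely many points where $r(\cdot-1)=1$, bound each remaining subinterval of $\Delta(\delta)$ by $2\delta/\mu$ via the speed estimate $|r'|\ge\mu$, and count the pieces to get at most $3k_0+2$. Your bookkeeping differs only in organization---you classify components of $\Delta(\delta)$ first and use a local-extremum argument for the sign-definite interior ones (forcing a point of $S$), whereas the paper classifies the post-removal subintervals directly by endpoint type and uses the intermediate value theorem to locate a zero of $r-1$---but the ingredients and the final count $3k_0+2$ coincide.
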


\begin{proof}  
	Let $\delta\in (0,\delta_1)$ be given, and set $\Delta=\Delta(\delta)$. 
	
	By the definition of $k_0$, there are exactly $\ell_0\in\{1,\ldots, k_0\}$ points $t_1,t_2,\dots,t_{\ell_0}$ 
	in $[0,m]$ such that $r(t_j-1)=1$, $j=1,2,\dots, \ell_0$. 
	The set  
	$$
	(0,m)\cap \Delta \setminus\{t_1,t_2,\dots,t_{\ell_0}\}
	$$
	is an open subset of $(0,m)$, and thus it is the disjoint union of at most countable many open intervals denoted by $\mathcal{I}$. 
	Clearly, the sum of the lengths of the intervals in $\mathcal{I}$ is equal to $|\Delta|$. 
	
	Let $(\alpha,\beta)$ be an element of $\mathcal{I}$. 
	By the choice of $\delta_1$ and by Propositions \ref{prop:exist},
	\ref{prop:mu} if $t\in (\alpha,\beta)$ 
	then $r'(t)$ exists, and by \eqref{p'large}
	$$
	|r'(t)|=|-cr(t)+dg(r(t-1))|\ge\mu.
	$$
	Therefore, by continuity, either  $r'(t)\ge\mu$ for all $t\in(\alpha,\beta)$, or $r'(t)\le -\mu$ for all $t\in(\alpha,\beta)$. 
	Then, for the length of the interval $(\alpha,\beta)$, one obtains
	\begin{equation}\label{length}
		\beta-\alpha\leq \frac{1}{\mu}\left|\int_\alpha^\beta r'(s)\, ds\right|
		=\frac{1}{\mu}\left|r(\beta)-r(\alpha)\right|
		\le \frac{1}{\mu}|(1+\delta)-(1-\delta)|=\frac{2\delta}{\mu}.
	\end{equation}
	
	For any $(\alpha,\beta)\in\mathcal{I}$ at least one of the five  possibilities can occur:
	\begin{itemize}
		\item[(i)]  $\alpha=0$,
		\item[(ii)]  $\beta=m$,
		\item[(iii)]  $\alpha=t_j$ for some $j\in\{1,\ldots,\ell_0\}$, 
		\item[(iv)]  $\beta=t_j$ for some $j\in\{1,\ldots,\ell_0\}$, 
		\item[(v)]  $[\alpha,\beta]\subset (0,m)$ and 
		$[\alpha,\beta]\cap\{t_1,t_2,\dots,t_{\ell_0}\}=\emptyset$.
	\end{itemize}
	Case (i) (and similarly Case (ii)) can happen for at most one interval in $\mathcal{I}$. 
	The number of intervals in $\mathcal{I}$, for which Case (iii) (and similarly  Case (iv)) can hold, is  at most $\ell_0\leq k_0$. 
	For Case (v), from the fact that either  $r'(t)\ge\mu$ for all $t\in(\alpha,\beta)$,  or $r'(t)\le-\mu$ for all $t\in(\alpha,\beta)$, 
	it is easy to see that either $r(\alpha)=1-\delta$ and $r(\beta)=1+\delta$, or $r(\alpha)=1+\delta$ and $r(\beta)=1-\delta$. 
	Thus, by continuity, there is a $t_*\in(\alpha,\beta)$ with 
	$r(t_*)=1$. By the definition of $k_0$, Case (v) may appear at most $k_0$ times.
	
	Therefore, the  set  
	$(0,m)\cap \Delta \setminus\{t_1,t_2,\dots,t_{k_0}\}$ 
	contains at most $2+2k_0+k_0=3k_0+2$ open intervals.   
	
	Finally, by applying \eqref{length}, we conclude that 
	$$
	|\Delta|\le (3k_0+2)\frac{2\delta}{\mu}=k_1\delta.
	$$ 
\end{proof}

The next result estimates the difference of the periodic solution $r$ 
of Equation \eqref{eqn:Eg} and a solution of Equation \eqref{eqn:Ef}. 

\begin{proposition}\label{prop:x-y} 
	Suppose that property (P) holds, and for some  
	$\delta\in (0,\delta_2)$ the positive constants $a, b$, and the function $f$, in addition to (F), satisfy the conditions
	\begin{itemize}
		\item[(i)] $|c-a|<\delta, \quad |d-b|<\delta$;  
		\item[(ii)] $\|f- g\|_{[\kappa_1,1-\delta]\cup[1+\delta,\kappa_2]}<\delta$.
	\end{itemize}
	Let $\psi \in C^+$ be given so that, for the solution $y=y^\psi$ 
	of \eqref{eqn:Ef}, the inequality 
	$$\|r_1-y_1\|<\delta$$  
	is valid. 
	Then 
	$$\left|r(t)-y(t)\right|<\delta k_2^m\quad \text{ for all } t\in [0,m+1].
	$$ 
\end{proposition}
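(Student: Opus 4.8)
The plan is an induction over the unit intervals $[j,j+1]$, $j=0,1,\dots,m$, with inductive statement
\[
\|r_{j+1}-y_{j+1}\|\le\delta k_2^{\,j},\qquad\text{i.e.}\qquad |r(t)-y(t)|\le\delta k_2^{\,j}\ \ (t\in[j,j+1]).
\]
The case $j=0$ is exactly the hypothesis $\|r_1-y_1\|<\delta$. Since $k_2>1$ and $\bigcup_{j=0}^m[j,j+1]=[0,m+1]$ (recall $m\ge2$ by Remark \ref{remark}), once the induction is done we obtain $|r(t)-y(t)|<\delta k_2^m$ on $[0,m+1]$, as claimed. One observation is used throughout: $\delta<\delta_2$ gives $\delta k_2^m<\delta_2k_2^m\le\min\{p_m-\kappa_1,\kappa_2-p_M\}$, so, together with $r(\mathbb R)\subset[p_m,p_M]$, every bound $|r(t)-y(t)|\le\delta k_2^m$ forces $y(t)\in(\kappa_1,\kappa_2)$; this keeps all the values of $y$ that occur in the range covered by hypothesis (ii).

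For the step from $[j-1,j]$ to $[j,j+1]$ (where $1\le j\le m$), fix $t\in[j,j+1]$ and subtract the variation-of-constants identities, \eqref{inteq-ei} for $r$ and \eqref{inteq-en} for $y$, both with $\tau=j$:
\begin{align*}
r(t)-y(t)&=\bigl(e^{-c(t-j)}r(j)-e^{-a(t-j)}y(j)\bigr)\\
&\quad+\int_j^t\bigl(d\,e^{-c(t-s)}g(r(s-1))-b\,e^{-a(t-s)}f(y(s-1))\bigr)\,ds.
\end{align*}
The first parenthesis I split as $e^{-c(t-j)}(r(j)-y(j))+(e^{-c(t-j)}-e^{-a(t-j)})y(j)$ and bound, using $t-j\in[0,1]$, $|e^{-c(t-j)}-e^{-a(t-j)}|\le|c-a|<\delta$, $y(j)<d/c+\delta_1$ (from $p_M<d/c$ and the inductive bound), and $|r(j)-y(j)|\le\delta k_2^{\,j-1}$, by $\delta k_2^{\,j-1}+(d/c+1)\delta$. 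In the integrand I write $d\,e^{-c(t-s)}g(r(s-1))-b\,e^{-a(t-s)}f(y(s-1))=d\,e^{-c(t-s)}\bigl(g(r(s-1))-f(y(s-1))\bigr)+\bigl(d\,e^{-c(t-s)}-b\,e^{-a(t-s)}\bigr)f(y(s-1))$; since $f$ takes values in $[0,1]$ and $|d-b|<\delta$, $|c-a|<\delta$, the integral of the last summand is $\le C\delta$ for an explicit constant $C$.

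The heart of the proof is the term $d\int_j^t e^{-c(t-s)}\bigl(g(r(s-1))-f(y(s-1))\bigr)\,ds$, where the \emph{main obstacle} is the discontinuity of $g$ at $\xi=1$: a pointwise Lipschitz comparison of $g(r(s-1))$ with $f(y(s-1))$ breaks down whenever $r(s-1)$ or $y(s-1)$ is near $1$. I split $[j,j+1]$ according to whether $r(s-1)$ is near $1$. On $S:=\{s\in[j,j+1]:|r(s-1)-1|<\delta+\delta k_2^{\,j-1}\}$ I use only that $g$ and $f$ take values in $[0,1]$, so $|g(r(s-1))-f(y(s-1))|\le1$; since $\delta+\delta k_2^{\,j-1}<\delta_1$, Proposition \ref{prop:delta}, applied to the translate $S-1\subset[j-1,j]\subset[0,m]$, gives $|S|\le k_1(\delta+\delta k_2^{\,j-1})\le2k_1\delta k_2^{\,j-1}$, so this portion of the integral is $\le2dk_1\delta k_2^{\,j-1}$. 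For $s\notin S$ the inductive bound on $[j-1,j]$ forces $r(s-1)$ and $y(s-1)$ onto the same side of $1$: either both $\ge1+\delta$, where $g(r(s-1))=0$ and, since $y(s-1)\in[1+\delta,\kappa_2]$, $|f(y(s-1))|<\delta$ by (ii); or both in $[\kappa_1,1)$, where $g$ is $C^1$ with $\|g'\|_{[\kappa_1,1)}<\infty$ (this is where $\limsup_{\xi\to1-}g'(\xi)<\infty$ from (G) enters), $y(s-1)\in[\kappa_1,1-\delta]$, and hence $|g(r(s-1))-f(y(s-1))|\le\|g'\|_{[\kappa_1,1)}|r(s-1)-y(s-1)|+\delta\le(\|g'\|_{[\kappa_1,1)}+1)\delta k_2^{\,j-1}$, making this portion $\le d(\|g'\|_{[\kappa_1,1)}+1)\delta k_2^{\,j-1}$. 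Collecting the contributions — $\delta k_2^{\,j-1}$ and $(d/c+1)\delta$ from the first parenthesis, $C\delta$ from the kernel/coefficient mismatch, $2dk_1\delta k_2^{\,j-1}$ from $S$, and $d(\|g'\|_{[\kappa_1,1)}+1)\delta k_2^{\,j-1}$ from the rest — and using $\delta\le\delta k_2^{\,j-1}$, the total is $\delta k_2^{\,j-1}$ times a constant that, by the very definition $k_2=2+\tfrac dc+(d+\delta_1)\bigl(2+2k_1+\|g'\|_{[\kappa_1,1)}\bigr)$ from Section 2, does not exceed $k_2$. Thus $|r(t)-y(t)|\le\delta k_2^{\,j}$, which closes the induction (the strict inequality carries over from the strict base case).
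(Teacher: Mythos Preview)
Your approach is essentially the paper's: induction over unit intervals via the variation-of-constants identities, splitting the integrand according to whether $r(s-1)$ is near $1$, and invoking Proposition~\ref{prop:delta} to control the measure of the ``bad'' set. All of the ideas are correct.

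There is, however, one bookkeeping slip that keeps your constants from closing up to $k_2$. You split the boundary term as $e^{-c(t-j)}(r(j)-y(j))+(e^{-c(t-j)}-e^{-a(t-j)})y(j)$ and then bound $y(j)$ by $d/c+1$. With that, together with $C\delta\approx(1+d)\delta$ for the kernel mismatch, your collected constant becomes $3+\tfrac dc+d(2+2k_1+\|g'\|_{[\kappa_1,1)})+O(\delta)$, and the required inequality against $k_2=2+\tfrac dc+(d+\delta_1)(2+2k_1+\|g'\|_{[\kappa_1,1)})$ reduces to $1+\delta\le\delta_1(2+2k_1+\|g'\|_{[\kappa_1,1)})$. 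Nothing in the construction of $\delta_1$ and $\mu$ (hence $k_1$) guarantees this; $\delta_1$ is merely chosen small enough for Proposition~\ref{prop:mu}, independently of $k_1$. The fix is immediate: either split the boundary term as the paper does, namely $(e^{-c(t-j)}-e^{-a(t-j)})r(j)+e^{-a(t-j)}(r(j)-y(j))$, so that the exponential difference is multiplied by $r(j)<d/c$; or keep your split but use the sharper bound $y(j)<d/c+\delta_1$ that your own intermediate step already establishes. Either way the extra ``$+1$'' disappears and the constant lands inside $k_2$ exactly.
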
 

\begin{proof}
	It suffices to show that
	\begin{equation}\label{norm:x-y}
		\|r_j-y_j\|<\delta k_2^{j-1} 
	\end{equation}
	for all  $ j\in \{1,2,\dots,m+1\}$.
	
	We prove by induction. 
	According to the assumption $\|r_1-y_1\|<\delta$,  statement \eqref{norm:x-y}
	holds for $j=1$.  
	Suppose that $j\in \{1,2,\ldots,m\}$, and \eqref{norm:x-y} is satisfied. 
	It remains to show that  
	$$\|r_{j+1}-y_{j+1}\|<\delta k_2^j.$$ 
	
	By the choice of $\delta_2$, 
	$$
	\delta<\delta_2\le \frac{1}{k_2^m}\min\{p_m-\kappa_1,\kappa_2-p_M\}.
	$$
	Hence, from  the assumption $\|r_j-y_j\|<\delta k_2^{j-1} $, one finds 
	$y(t)\in[\kappa_1,\kappa_2]$ for all $t\in[j-1,j]$. 
	
	For $t\in [j,j+1]$ we have 
	$$r(t)=e^{-c(t-j)}r(j)+d\int_{j}^{t} e^{-c(t-s)}g(r(s-1)) \,ds$$ 
	and 
	$$y(t)=e^{-a(t-j)}y(j)+b\int_{j}^{t}e^{-a(t-s)}f(y(s-1))\,ds.$$ 
	Using these integral equations, $g([0,\infty))\subset[0,1]$, 
	the inequality $|e^{-cu}-e^{-a u}|\le |c-a|$ for $0\le u \le 1$, $p_M<d/c$,  and assumption (i), we obtain, for $j\leq t \leq j+1$, that
	\begin{align*}
		\left|r(t)-y(t)\right| 
		& \leq \left|e^{-c(t-j)}-e^{-a(t-j)}\right|r(j)+e^{-a(t-j)}\left|r(j)-y(j)\right| \\
		& + |d-b|\int_{j}^{t} e^{-c(t-s)}g(r(s-1)) \, ds + b\int_{j}^{t} \left|e^{-c(t-s)}-e^{-a(t-s)}\right|g(r(s-1)) \, ds\\
		& + b \int_{j}^{t} e^{-a(t-s)}\left|g(r(s-1))-f(y(s-1))\right| \, ds\\
		& \leq |c-a|r(j)+ \|r_j-y_j\|+|d-b| + (d+\delta)|c-a|\\
		& +(d+\delta)\int_{j}^{j+1} \left|g(r(s-1))-f(y(s-1))\right| \, ds\\
		& \leq \delta\left( \dfrac{d}{c}+1 +k_2^{j-1}+  (d+\delta)\right) +
		(d+\delta)\int_{j-1}^{j} \left|g(r(s))-f(y(s))\right| \,ds.
	\end{align*} 
	Let 
	$$\Delta_j(\delta)=\{t\in[j-1,j]:\quad |r(t)-1|<2\delta k_2^{j-1}\}.$$  
	Then, as $2\delta k_2^{j-1}<2\delta_2 k_2^{m}\le \delta_1$, 
	by Proposition \ref{prop:delta}, we have $|\Delta_j(\delta)|\leq 2 \delta k_2^{j-1}k_1$. 
	Hence 
	$$\int_{\Delta_j(\delta)} \left|g(r(s))-f(y(s))\right| \, ds\leq  
	|\Delta_j(\delta)|\leq 2k_2^{j-1}k_1\delta$$ 
	since $|g(r(s))-f(y(s))|\leq 1$ for all $s\in [j-1,j]$. 
	Let us define 
	$$S_+ = \{t\in[j-1,j]:\quad  r(t)\geq 1 + 2\delta k_2^{j-1}\},$$
	$$S_- = \{t\in[j-1,j]:\quad  r(t)\leq 1-2\delta k_2^{j-1}\}.$$
	From the inductive hypothesis $\|r_j-y_j\|<\delta k_2^{j-1}$, it follows that 
	$$
	1+\delta k_2^{j-1}<y(s)\le \kappa_2\quad \text{ for all } s \in S_+,
	$$
	and 
	$$
	\kappa_1\le y(s)<1-\delta k_2^{j-1}\quad \text{ for all } s \in S_-.
	$$ 
	For $s\in S_+$ we have
	\begin{align*}
		\left|g(r(s))-f(y(s))\right|& = 	f(y(s))\leq \|f\|_{[1+\delta k_2^{j-1},\kappa_2]}\leq \|f\|_{[1+\delta,\kappa_2]}\\ &=\|g-f\|_{[1+\delta,\kappa_2]} < \delta.
	\end{align*}
	If $s\in S_-$ then 
	\begin{align*}
		\left|g(r(s))-f(y(s))\right| 
		& \leq \left|g(r(s))-g(y(s))\right|+\left|g(y(s))-f(y(s))\right|\\
		& \leq \|g'\|_{[\kappa_1,1]}|r(s)-y(s)|+ \|g-f\|_{[\kappa_1,1-\delta k_2^{j-1}]}\\
		& \leq \|g'\|_{[\kappa_1,1)}\|r_j-y_j\| + \|g-f\|_{[\kappa_1,1-\delta]}\\
		& \leq \left(\|g'\|_{[\kappa_1,1)}k_2^{j-1}+1\right)\delta.
	\end{align*}
	Therefore 
	$$\int_{j}^{j+1} \left|g(r(s-1)-f(y(s-1)))\right| \,ds \leq \delta \left(2k_2^{j-1}k_1+\|g'\|_{[\kappa_1,1)}k_2^{j-1}+1\right).$$ 
	It follows that 
	\begin{align*}
		\|r_{j+1}-r_{j+1}\|
		& = \delta\left(k_2^{j-1}\left[1+(d+\delta)(2k_1+\|g'\|_{[\kappa_1,1)})\right]+\frac{d}{c}+2(d+\delta)+1\right).
	\end{align*}
	Clearly, $1\le k_2^{j-1}$, and thus
	$$\frac{d}{c}+2(d+\delta_1)+1\leq k_2^{j-1}\left(\frac{d}{c}+2(d+\delta_1)+1\right).$$
	Consequently, \begin{align*}
		\|x_{j+1}-y_{j+1}\|
		& \leq \delta k_2^{j-1}\left(2+(d+\delta_1)\left(2+2k_1+\|g'\|_{[\kappa_1,1)})+\frac{d}{c}\right)\right) = \delta k_2^j.
	\end{align*}
	This completes the proof.\end{proof}

\section{Periodic orbits}\label{sec3}

In this section we  prove that equation \eqref{eqn:Ef} has a stable periodic orbit provided property (P) holds for Equation \eqref{eqn:Eg} and 
$a, b, f$ are close to $c, d, g$ in the sense given below. 

In Section 2, for $d>c>0$ and a periodic solution $p$ of Equation \eqref{eqn:Eg} satisfying (P1) and (P2),  
we constructed the constants $p_m, p_M, \kappa_1,\kappa_2,m,k_0, \delta_0,\delta_1,\mu,k_2, \delta_2,\gamma,\varepsilon_0, \varepsilon_1, \sigma_0,\sigma_1$. 

The following result is a version of Theorem \ref{main_new} by formulating the closeness of $a, b, f$ to $c, d, g$ with the explicitly given constants $\varepsilon_0, \varepsilon_1, m$. First we prove Theorem \ref{main0}, and then show that Theorem \ref{main_new} is a consequence.

\begin{theorem}\label{main0}
	Let property (P) hold for Equation \eqref{eqn:Eg}, and suppose that  
	the positive constants $a, b$ and the function $f$ satisfy the conditions
	\begin{itemize}
		\item[(i)] $|c-a|(1+\varepsilon_0)<\frac{\varepsilon_1}{2},$
		\item[(ii)] $|d-b|<\varepsilon_1,$
		\item[(iii)] $\|f-g\|_{[\kappa_1,1-\varepsilon_1]\cup[1+\varepsilon_1,\kappa_2]}<\varepsilon_1,$
		\item[(iv)] $b\|f\|_{[1+\varepsilon_0,\kappa_2]}<\frac{\varepsilon_1}{2},$ 
		\item[(v)] $b\left(1+\frac{4b}{a}\right)\|f'\|_{[1+\varepsilon_0,\kappa_2]}\left(1+b\|f'\|_{[\kappa_1,\kappa_2]}\right)^m<1$.
	\end{itemize}
	Then equation \eqref{eqn:Ef} has a periodic solution $ q: \mathbb{R}\to\mathbb{R}$, with minimal period $\omega_q>0$ satisfying 
	$|\omega_q-\omega_p|<\frac{\varepsilon_0}{c}$, such that the periodic orbit $
	\mathcal{O}_q=\{ q_t:\, t\in [0,\omega_q]\}
	$
	is hyperbolic, orbitally stable, exponentially attractive with asymptotic phase,  and $\dist\,(\mathcal{O}_q,\mathcal{O}_p)<\varepsilon_0.$ 
	The set 
	$$ U_{\varepsilon_0,\kappa_2}= \{\psi \in C^+:\, \psi(s)\in [1+\varepsilon_0,\kappa_2] \text{ for all } s\in [-1,0]\} $$ 
	belongs to the region of attraction of $\mathcal{O}_q$. Moreover, if the stronger condition 
	$b\|f\|_{[1+\varepsilon_0,\infty)}<\frac{\varepsilon_1}{2}$ is satisfied instead of (iv), then  the region of attraction of $\mathcal{O}_q$ contains
	$$ U_{\varepsilon_0,\infty}= \{\psi \in C^+:\, \psi(s)\ge 1+\varepsilon_0 \text{ for all } s\in [-1,0]\}. $$ 
	
\end{theorem}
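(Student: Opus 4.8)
The plan is to realize $q$ as a fixed point of a first-return (Poincar\'e) map to a hyperplane transverse to the known orbit $\mathcal O_p=\mathcal O_r$, and to read off hyperbolicity and the stability properties from the contraction rate of that map. As the section I would take
\[
H=\{\phi\in C:\ \phi(0)=1+2\e_0\},
\]
which by \eqref{r-properties} contains the orbit segment $r^{\ast}:=r_{-\sigma_0}$, all of whose values lie in $[1+2\e_0,p_M]$; hence a closed $C$-ball $V$ of radius $\le\e_0$ about $r^{\ast}$ inside $H$ consists of functions with values in $[1+\e_0,\kappa_2]$. The semiflow $\Phi$ of \eqref{eqn:Ef} is transverse to $H$ near $r^{\ast}$: for such $\phi$ one has $\phi(-1)$ near $r^{\ast}(-1)\ge 1+2\e_0$, so condition (iv) makes $bf(\phi(-1))$ tiny while $a\phi(0)=a(1+2\e_0)$ is bounded below, whence $(y^{\phi})'(0)<0$ with $|(y^{\phi})'(0)|$ bounded below by a constant of order $c$.

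I would then prove a funnel property. For $\psi\in U_{\e_0,\kappa_2}$, as long as $y^{\psi}(\cdot-1)\ge 1+\e_0$ condition (iv) forces $|(y^{\psi})'(t)+a\,y^{\psi}(t)|<\e_1/2$, so $y^{\psi}$ is trapped between two exponentials and meets the value $1+2\e_0$ at a first time $t_0$ bounded uniformly in $\psi$, with $y^{\psi}\ge 1+2\e_0$ on $[t_0-1,t_0]$. Using the explicit form $r(t)=(1+\e_0)e^{-ct}$ on $[-\sigma_0,1+\sigma_1]$ from \eqref{r-properties} together with (i) and (iv), a bounded-time Gronwall comparison shows that a suitable segment of $y^{\psi}$ is within $\delta_2$ of $r_1$, so Proposition \ref{prop:x-y} (with $\delta$ just above $\e_1$) gives that $y^{\psi}$ stays within $\e_1k_2^{m}=\e_0$ of $r$ over a whole period $\omega_p<m+1-\gamma$. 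With transversality this forces $y^{\psi}$ to re-cross $H$ near $r^{\ast}$ at a time $\tau(\psi)$ with $|\tau(\psi)-\omega_p|<\e_0/c$, which by the choice of $\gamma,\e_0$ stays in $[0,m+1]$, at a segment in $V$. Thus $P:V\to V$, $P\phi=y^{\phi}_{\tau(\phi)}$, is well defined with return times close to $\omega_p$, and every orbit from $U_{\e_0,\kappa_2}$ enters $V$ in finite time.

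The central point is the contraction of $P$. For $\phi,\phi'\in V$ the solutions stay in $[\kappa_1,\kappa_2]$ on $[0,m]$ by the previous step, their initial segments take values in $[1+\e_0,\kappa_2]$, and $\phi(0)=\phi'(0)$, so Proposition \ref{prop:y-z} with $\nu_1=\kappa_1,\ \nu_2=\kappa_2,\ \delta=\e_0,\ \ell=m$ yields $\|y^{\phi}_t-y^{\phi'}_t\|\le\lambda\|\phi-\phi'\|$ for $t\in[1,m+1]$, where $\lambda=b\|f'\|_{[1+\e_0,\kappa_2]}(1+b\|f'\|_{[\kappa_1,\kappa_2]})^{m}$. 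Splitting $P\phi-P\phi'=(y^{\phi}_{\tau(\phi)}-y^{\phi'}_{\tau(\phi)})+(y^{\phi'}_{\tau(\phi)}-y^{\phi'}_{\tau(\phi')})$, the first term is $\le\lambda\|\phi-\phi'\|$, and for the second I would bound the time shift by the transversality lower bound on the crossing speed, $|\tau(\phi)-\tau(\phi')|\lesssim\tfrac4a\lambda\|\phi-\phi'\|$, and then use the uniform bound $|(y^{\phi'})'|\le 2b$ supplied by Proposition \ref{prop-bound}, getting $\|y^{\phi'}_{\tau(\phi)}-y^{\phi'}_{\tau(\phi')}\|\le\tfrac{4b}{a}\lambda\|\phi-\phi'\|$. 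Hence $\|P\phi-P\phi'\|\le(1+\tfrac{4b}{a})\lambda\|\phi-\phi'\|$, a contraction by (v). The Banach fixed point theorem gives a unique $\phi_{\ast}\in V$; the solution $q$ of \eqref{eqn:Ef} through $\phi_{\ast}$ is periodic with minimal period $\omega_q=\tau(\phi_{\ast})$ (minimal because $\tau$ is a first-return time and $q$, being $\e_0$-close to the non-constant $r$, is non-constant), and the funnel estimate yields $|\omega_q-\omega_p|<\e_0/c$ and $\dist(\mathcal O_q,\mathcal O_p)<\e_0$.

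Finally, since $f|_{(0,\infty)}$ is $C^{1}$ and solutions starting with a positive value stay positive, $\Phi$ and hence $P$ are $C^{1}$, so the contraction estimate gives $\|DP(\phi_{\ast})\|\le(1+4b/a)\lambda<1$; as $DP(\phi_{\ast})$ is also compact (the semiflow is compact for times $\ge1$ and $\tau(\phi_{\ast})\ge1$), its spectrum is a point spectrum inside the open unit disk, consisting exactly of the nontrivial Floquet multipliers of $\mathcal O_q$, while $1$ is a simple multiplier with eigenvector the velocity $q'_0$. Thus $\mathcal O_q$ is hyperbolic, and the standard theory of hyperbolic periodic orbits for delay equations gives orbital stability, exponential attractivity and asymptotic phase; the funnel property puts $U_{\e_0,\kappa_2}$ in its region of attraction, and replacing (iv) by $b\|f\|_{[1+\e_0,\infty)}<\e_1/2$ lets the same decay estimate handle unbounded initial data once Proposition \ref{prop-bound}(ii) has driven the solution into a bounded range, giving $U_{\e_0,\infty}$. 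The step I expect to be hardest is the contraction of $P$: Proposition \ref{prop:y-z} contracts only because (v) forces $\|f'\|_{[1+\e_0,\kappa_2]}$ to be small, and converting this into a contraction of the return map in spite of the initial-segment dependence of the return time is precisely what the transversality speed bound and the factor $1+4b/a$ in (v) are for — with the minor additional nuisance of matching the segment $r_1$ appearing in Proposition \ref{prop:x-y} with the shifted orbit segments living on $H$, bridged by a bounded short-time comparison.
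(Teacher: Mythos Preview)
Your approach is essentially the paper's: a return map on a codimension-one section near the orbit of $r$, Proposition~\ref{prop:x-y} to keep solutions $\varepsilon_0$-close to $r$, Proposition~\ref{prop:y-z} for the Lipschitz bound, and the same transversality-plus-derivative argument producing the factor $b(1+4b/a)$ in the contraction constant. The paper places the section at level $1+\varepsilon_0$ (so that $r_0$ itself lies on it) and uses the whole slab $S_{\varepsilon_0,\kappa_2}=\{\psi:\psi(0)=1+\varepsilon_0,\ \psi([-1,0])\subset[1+\varepsilon_0,\kappa_2]\}$ instead of a ball; this makes ``the image lands in the domain'' automatic from \eqref{y-properties} and removes the time-shift nuisance you acknowledge at the end.

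There is one genuine gap. Your sentence ``minimal because $\tau$ is a first-return time'' is not supported by your construction: $\tau(\psi)$ is the crossing you locate in a window near $\omega_p$, not the first crossing of $H$, and since $r$ (hence $q$) may cross the section level several times in one period (this is exactly the situation when $k_0>1$), the return map you build is \emph{not} a first-return map. The paper devotes a separate Step~7 to minimality: assuming a period $\overline{\omega_q}\in(0,\omega_q)$, one uses $|q-r|<\varepsilon_0$ and the structure of $q$ near $\omega_q$ to force $\overline{\omega_q}\in(1,\omega_p-1-\sigma_0)$ with $q_{\overline{\omega_q}}\in S_{\varepsilon_0,\kappa_2}$; transferring back to $r$ produces an interval of length $1$ on which $r>1$, and after a short exponential-decay argument this yields a point $\nu$ with $r(\nu)=1$, $r|_{[\nu-1,\nu)}>1$, and $\nu-\sigma_1\in(0,\omega_p)$, contradicting the minimality of $\omega_p$. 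Something of this sort is needed; first-return alone does not do it.
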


\begin{proof}
	
	Define 
	$$
	S_{\varepsilon_0,\kappa_2}=\{ \psi \in C:\, \psi(0)=1+\varepsilon_0, \psi(s)\in[ 1+\varepsilon_0,\kappa_2] \text{ for all } s\in [-1,0] \}.
	$$ 
	Let $b>a>0$ and $f$ be given such that conditions (i)-(v) are satisfied. Let 	  $\psi \in S_{\varepsilon_0,\kappa_2}$, and  let $y=y^\psi$ be the solution of \eqref{eqn:Ef}. 
	
	\medskip 
	
	{\it Step 1:} Proof of 	\begin{equation}\label{r-y}
		|r(t)-y(t)|<\varepsilon_1 k_2^m=\varepsilon_0\quad \text{ for all } t\in [0,m+1].
	\end{equation}	
	For $t\in [0,1]$ we have 
	$$
	r(t)=e^{-ct}r(0)=e^{-ct}(1+\varepsilon_0),
	$$ 
	$$
	y(t)=e^{-at}(1+\varepsilon_0)+b\int_{0}^{t}e^{-a(t-s)}f(y(s-1))\,ds.
	$$
	Hence, by conditions (i) and (iv), for all $t\in[0,1]$,
	\begin{align*}
		|r(t)-y(t)|&\leq |e^{-ct}-e^{-at}|(1+\varepsilon_0)+b\int_{0}^{1}f\left(\psi(s-1)\right)\,ds\\
		& \leq |c-a|(1+\varepsilon_0)+b\|f\|_{[1+\varepsilon_0,\kappa_2]}< \varepsilon_1,
	\end{align*}
	that is $\|r_1-y_1\|<\varepsilon_1$. 
	
	Observe that 	$\varepsilon_1=\varepsilon_0/k_2^m\le\delta_2$.   
	Conditions (i)--(iii) of the theorem imply (i) and (ii) of Proposition \ref{prop:x-y} with $\delta=\varepsilon_1$.  
	The assumption $\|r_1-y_1\|<\delta $ of Proposition \ref{prop:x-y} holds as well. 
	Consequently \eqref{r-y} is satisfied. 
	
	\medskip 
	
	{\it Step 2:}  Properties of $y$. 
	
	Combining properties \eqref{r-properties} of $ r $, 		
	inequality \eqref{r-y}, the choice of $\varepsilon_0$, $r(\R)=[p_m,p_M]$, $\omega_p<m+1$, it follows that 
	\begin{equation}
		\begin{split}\label{y-properties}
			& y(t)\in (r(t)-\varepsilon_0,r(t)+\varepsilon_0)\subset 
			[p_m-\varepsilon_0,p_M+\varepsilon_0]\subset (\kappa_1,\kappa_2) \quad (0 \leq t \leq m+1), \\
			& y(\omega_p-\sigma_0)>r(\omega_p-\sigma_0)-\varepsilon_0=r(-\sigma_0)-\varepsilon_0=1+\varepsilon_0,\\
			& y(\omega_p+\sigma_1)<r(\omega_p+\sigma_1)+\varepsilon_0=r(\sigma_1)+\varepsilon_0=1+\varepsilon_0,\\
			& y(t)>r(t)-\varepsilon_0\geq 1+2\varepsilon_0-\varepsilon_0=1+\varepsilon_0\quad (t \in [\omega_p-1-\sigma_0,\omega_p-\sigma_0]),\\
			& y(t)>r(t)-\varepsilon_0\geq 1- \varepsilon_0\quad ( t \in [\omega_p-\sigma_0,\omega_p+\sigma_1]).
		\end{split}
	\end{equation}
	
	\medskip 
	
	{\it Step 3:} A return map.  	
	
	By using the first and the last inequality in \eqref{y-properties} and condition (iv),  we have 
	\begin{equation}\label{y-der}
		y'(t)  = -ay(t)+bf(y(t-1))  < -a(1-\varepsilon_0)+ \frac{\varepsilon_1}{2} \ \text{ for all } t\in [\omega_p-\sigma_0,\omega_p+\sigma_1].
	\end{equation}
	From $\delta_1<\delta_0<1/2$ and $\varepsilon_0\le k_2^m \delta_2\le\delta_1/2$ one gets $\varepsilon_0<1/4$. 
	From $\varepsilon_0\le c/2$, $m\ge 1$, $k_2\ge 3$, it follows that 
	$\varepsilon_1=\varepsilon_0/k_2^m <c/6$. 
	From condition (i), clearly  $|c-a|<\varepsilon_1/2$. Therefore  
	\begin{equation}\label{a>eps1}
		a>c-\frac{\varepsilon_1}{2}>
		c- \frac{c}{12}= \frac{11c}{12}> \frac{11}{2}\varepsilon_1.
	\end{equation}
	From \eqref{y-der} with $\varepsilon_0<1/4$ and $\varepsilon_1<2a/11$, which comes from \eqref{a>eps1}, 
	we conclude 
	\begin{equation}\label{y-der2}
		y'(t) <-\frac{3}{4}a +\frac{a}{11}< -\frac{a}{2} \text{ for all } t\in [\omega_p-\sigma_0,\omega_p+\sigma_1].
	\end{equation}		
	Consequently, $y$ strictly decreases on $[\omega_p-\sigma_0, \omega_p+\sigma_1].$ 
	From \eqref{y-properties} one gets $y(\omega_p-\sigma_0)>1+\varepsilon_0$ and  $y(\omega_p+\sigma_1)<1+\varepsilon_0$. 
	Thus, by the strict monotonicity property of $y$ on $[\omega_p-\sigma_0, \omega_p+\sigma_1]$,   
	there is a unique $\tau = \tau(\psi)\in(\omega_p-\sigma_0,\omega_p+\sigma_1)$ so that $y(\tau)=1+\varepsilon_0$. 
	By \eqref{y-properties} it follows that $y_\tau \in S_{\varepsilon_0,\kappa_2}.$
	
	Now we can define a return map $R$  by 
	$$R: S_{\varepsilon_0,\kappa_2}\ni \psi\mapsto y_\tau^\psi
	=\Phi(\tau(\psi),\psi)	
	\in S_{\varepsilon_0,\kappa_2}.$$ 
	
	\medskip 
	
	{\it Step 4:} The return map $R$ is a contraction.  	
	
	Let $\psi,\chi\in S_{\varepsilon_0,\kappa_2}$ and let $y=y^\psi$, $z=z^\chi$. We want to estimate $\|R(\psi)-R(\chi)\|$. 
	
	As $\psi\in S_{\varepsilon_0,\kappa_2}$ was arbitrary in Steps 1--3, the results 
	obtained in Steps 1--3 are valid for any element of 
	$S_{\varepsilon_0,\kappa_2}$. 
	Then, by \eqref{y-properties},   $y(t),z(t)\in [\kappa_1,\kappa_2]$, $t\in[-1,m+1]$. 
	Applying Proposition \ref{prop:y-z} with $\delta=\varepsilon_0$, $\ell=m$, $\psi,\chi\in S_{\varepsilon_0,\kappa_2}$ and $\psi(0)=\chi(0)=1+\varepsilon$, 
	we obtain
	\begin{equation}\label{y-z}
		|y(t)-z(t)|\leq b\|f'\|_{[1+\varepsilon_0,\kappa_2]}\left(1+b\|f'\|_{[\kappa_1,\kappa_2]}\right)^m\|\psi-\chi\| \quad \text{ for all } t\in [0,m+1].
	\end{equation}
	
	In order to estimate $\tau(\psi)-\tau(\chi)$, observe that $y(\tau(\psi))=1+\varepsilon_0=z(\tau(\chi))$, and 
	$\tau(\psi),\tau(\chi)\in (\omega_p-\sigma_0,\omega_p+\sigma_1)$. 
	Hence and from \eqref{y-der2} and \eqref{y-z} it follows that  
	\begin{equation}
		\begin{split}\label{tau-diff}
			|\tau(\psi)-\tau(\chi)|
			& =\left|\int_{\tau(\chi)}^{\tau(\psi)} 1\,dt\right|
			<\frac{2}{a}\left|\int_{\tau(\chi)}^{\tau(\psi)}y'(t)\,dt\right|=\frac{2}{a}|y(\tau(\psi))-y(\tau(\chi))|\\
			&= \frac{2}{a}|z(\tau(\chi))-y(\tau(\chi))|\leq \frac{2b}{a}\|f'\|_{[1+\varepsilon_0,\kappa_2]}\left(1+b\|f'\|_{[\kappa_1,\kappa_2]}\right)^m\|\psi-\chi\|.
		\end{split}
	\end{equation}
	By Proposition \ref{prop-bound}, $y(t)<b/a$ for all $t\ge 0$, 
	and  
	then $|y'(t)|=|-ay(t)+bf(y(t-1))|<2b$ for all $t>0$. Thus $\|y_{\tau_y}-y_{\tau_z}\|\leq 2b|\tau_y-\tau_z|.$ Hence
	\begin{equation}\label{2b}
		\|\Phi(\tau(\psi),\psi)-\Phi(\tau(\chi),\psi)\|
		\le 2b |\tau(\psi)-\tau(\chi)|.
	\end{equation}	
	Combining  \eqref{2b}, \eqref{tau-diff} and \eqref{y-z} it follows that  	
	\begin{align*}		
		\|R(\psi)-R(\chi)\|
		&=\|\Phi(\tau(\psi),\psi)-\Phi(\tau(\chi),\chi)\|\\
		&\leq \|\Phi(\tau(\psi),\psi)-\Phi(\tau(\chi),\psi)\|
		+\|\Phi(\tau(\chi),\psi)- \Phi(\tau(\chi),\chi)\|\\
		&\leq 2b |\tau(\psi)-\tau(\chi)| 
		+
		b\|f'\|_{[1+\varepsilon_0,\kappa_2]}\left(1+b\|f'\|_{[\kappa_1,\kappa_2]}\right)^m\|\psi-\chi\|\\
		& \leq b\left(1+\frac{4b}{a}\right)\|f'\|_{[1+\varepsilon_0,\kappa_2]}\left(1+b\|f'\|_{[\kappa_1,\kappa_2]}\right)^m\|\psi-\chi\|=\kappa\|\psi-\chi\|,
	\end{align*}
	where
	$$
	\kappa=b\left(1+\frac{4b}{a}\right)\|f'\|_{[1+\varepsilon_1,\kappa_2]}\left(1+b\|f'\|_{[\varepsilon_1,\kappa_2]}\right)^m<1
	$$ 
	by condition (v). 
	Therefore, $R:S_{\varepsilon_0,\kappa_2}\to S_{\varepsilon_0,\kappa_2}$ is a contraction. 
	
	\medskip 
	
	{\it Step 5:} The periodic orbit. 
	
	The closed subset $S_{\varepsilon_0,\kappa_2}$ of $C$ is a complete metric space with the metric induced by the norm of $C$. By Step 4, 
	$R: S_{\varepsilon_0,\kappa_2}\to S_{\varepsilon_0,\kappa_2}$ is a contraction, and has a unique fixed point $\phi^*$ in $S_{\varepsilon_0,\kappa_2}$ satisfying
	$$ 
	\phi^*=\Phi(\tau(\phi^*),\phi^*)
	$$
	with $\tau(\phi^*)\in [\omega_p-\sigma_0,\omega_p+\sigma_1]$.
	Then, for the solution $y^*=y^{\phi^*}:[-1,\infty)\to\R$ of Equation \eqref{eqn:Ef}, $y^*(t)=y^*(t+\tau(\phi^*))$ for all $t\ge -1$. 
	Let $\omega_q=\tau(\phi^*)$, and let $q:\R\to\R$ be the $\omega_q$-periodic extension of $y^*$. 
	Clearly, $q$ is an $ \omega_q $-periodic solution of  Equation \eqref{eqn:Ef}. 
	
	As $\phi^*\in S_{\varepsilon_0,\kappa_2}$, the estimations obtained for 
	$y=y^\psi$ in Steps 1--3 are valid for $y^*$. 
	Therefore, 
	\begin{equation}
		\begin{split}\label{q-properties}
			&|q(t)-r(t)|
			<\varepsilon_0 \quad (t\in [0,m+1]),\\
			& q(t)> 1+\varepsilon_0 \quad (t\in [\omega_p-1-\sigma_0,\omega_p-\sigma_0),\\
			& q \text{ strictly decreases on } [\omega_p-\sigma_0,\omega_p+\sigma_1].
		\end{split}
	\end{equation}
	It easily follows that $\dist\,(\mathcal{O}_q,\mathcal{O}_p)<\varepsilon_0$, and 
	$$
	|\omega_q-\omega_p|<\max\{\sigma_0,\sigma_1\}\le \frac{\varepsilon_0}{c}.
	$$
	The exponential attractivity and asymptotic phase properties of the solution $q$ are consequences of the contractivity of $R$ in the same way as in  Chapter XIV of \cite{DVLGW}, and \cite{BKV}.
	
	\medskip 
	
	{\it Step 6:} Region of attraction. 
	
	As $ R $ is a contraction on the complete metric space 
	$ S_{\varepsilon_0,\kappa_2} $, the set $ S_{\varepsilon_0,\kappa_2} $ belongs to the region of attraction of $\mathcal{O}_q$. 
	
	For $\psi \in U_{\varepsilon_0,\kappa_2}$, we claim that there is 
	a $t_*\ge 0$ such that $y_{t_*}^\psi\in S_{\varepsilon_0,\kappa_2} $. 
	
	If $\psi(0)=1+\varepsilon_0$, then the claim holds with $t_*=0$. 
	Suppose $\psi(0)>1+\varepsilon_0$, and let $y=y^\psi$. 
	Choose a maximal $T\in (0,\infty)\cup\{\infty\}$ so that $y(t)>1+\varepsilon_0$ for all $t\in (0,T)$.  
	
	Define the sequence $(t_k)$ by $t_0=0$, $t_k=\min\{t_{k-1}+1,T\}$, $k\in \N$.  
	By using $y_0=\psi \in U_{\varepsilon_0,\kappa_2}$, 
	condition (iv), and \eqref{a>eps1}, we obtain that 
	$$
	y'(t)=-ay(t)+bf(\psi(t-1))
	<-a(1+\varepsilon_0)+\frac{\varepsilon_1}{2}
	<-\frac{11}{2}\varepsilon_1+\frac{\varepsilon_1}{2}=-5\varepsilon_1 
	$$
	for all $t\in (0,t_1]$. 
	Hence, $y$ strictly decreases on $[t_0,t_1]$, and  either $y_{t_1}\in S_{\varepsilon_0,\kappa_2} $, or 
	$y(t_1)>1+\varepsilon_0$ and $y_{t_1}\in U_{\varepsilon_0,\kappa_2}$. 
	If $k\in\N$ and $y(t_k)>1+\varepsilon_0$ and $y_{t_k}\in U_{\varepsilon_0,\kappa_2}$, then  
	in the same way as above,  
	$y'(t)<-5\varepsilon_1$ follows for all $t\in (t_k,t_{k+1}]$.  
	Repeating this process, we either find a $k\in\N$  so that the claim holds 
	with $t_*=t_k$, or 
	$$
	y(t)>1+\varepsilon_0,\quad y'(t)<-5\varepsilon_1 
	\text{ for all }t>0, 
	$$ 
	which is a contradiction. 
	
	Therefore, the claim holds, and consequently 
	$ U_{\varepsilon_0,\kappa_2} $ is also in the region of attraction of $\mathcal{O}_q$. 	
	
	If $\chi\in U_{\varepsilon_0,\infty}$ and $b\|f\|_{1+\varepsilon_0,\infty)}<\varepsilon_1/2$ holds,  
	then following the proof of the above claim, a $t_*\ge 0$ can be obtained so that $y^\chi(t_*)=1+\varepsilon_0$ and $y^\chi(t_*+s)\ge 1+\varepsilon_0$, 
	$-1\le s\le 0$. 
	Choose $\psi=y_{t_*}^\chi$ as an initial function of the solution 
	$y=y^\psi $ of Equation \eqref{eqn:Ef}. Then, although $\psi$ may not be in $S_{\varepsilon_0,\kappa_2}$, the stronger condition  $b\|f\|_{1+\varepsilon_0,\infty)}<\varepsilon_1/2$ instead of (iv) allows to apply the above Steps 1--3 to find a $\tau>0$ with $y_\tau^\psi\in S_{\varepsilon_0,\kappa_2}$. 
	Therefore, $ U_{\varepsilon_0,\infty} $ belongs to the region of attraction of $\mathcal{O}_q$. 	
	
	\medskip 
	
	{\it Step 7:} $\omega_q$ is the minimal period of $q$.  
	
	Assume that $\overline{\omega_q}\in (0,\omega_q)$ is also a period of $q$, that is 
	$q_0=q_{\overline{\omega_q}}$. 
	
	If $\overline{\omega_q}\in (0,1]$ then $q(t)\ge 1+\varepsilon_0$ follows 
	for all $t\in \R$. This is impossible by \eqref{q-properties} since 
	$r(t)<1$ for $t\in (\sigma_1,1+\sigma_1]$. 
	The properties \eqref{q-properties} of $q$ and   $\omega_q\in [\omega_p-\sigma_0,\omega_p+\sigma_1]$ imply that   $\overline{\omega_q}<\omega_p-1-\sigma_0$. 
	Therefore, 
	$$
	1<\overline{\omega_q}<\omega_p-1-\sigma_0,
	$$
	and 
	$q(\overline{\omega_q})=1+\varepsilon_0$,
	$q(\overline{\omega_q}+s)\ge 1+\varepsilon_0$, $-1\le s \le 0 $.
	
	Applying \eqref{q-properties}, it follows that 
	$r(t)>1$ for all $t\in [\overline{\omega_q}-1,\overline{\omega_q}]$, and $r(\overline{\omega_q})\in (1,1+2\varepsilon_0)$. 
	Then, from Equation \eqref{eqn:Eg}, $r(t)=r(\overline{\omega_q})e^{-c(t-\overline{\omega_q})}$, 
	$t\in[\overline{\omega_q},\overline{\omega_q}+1]$. Hence, by the choice of $\varepsilon_0$ and 
	$\gamma\in (0,1)$,  
	we get 
	$r(\overline{\omega_q}+1)<(1+2\varepsilon_0)e^{-c}<1$. 
	Then there is a $\nu\in (\overline{\omega_q},\overline{\omega_q}+1)$ such that 
	$$  
	r(\nu)=1,\quad r(\nu +s)>1 \text{ for all }s\in [-1,0).
	$$
	By Remark \ref{remark} and the definition of $r$, $\nu-\sigma_1$ is a period 
	of $p$ . 
	Clearly, 
	$$
	\nu-\sigma_1\in (1-\sigma_1,\omega_p-\sigma_0-\sigma_1)\subset (0,\omega_p),
	$$
	a contradiction to the minimality of $\omega_p$. 
	Consequently, $\omega_q$ is the minimal period of $q$.    	
\end{proof}

In the Introduction we formalized Theorem \ref{main_new} in order to state the main result in a not too  technical way. 
Now we show how to obtain  Theorem \ref{main0} from Theorem \ref{main_new}.

\begin{proof}[Proof of Theorem \ref{main_new}]
	Assume property (P), and choose $\kappa_1,\kappa_2,m,\delta_0,\delta_1,\mu,k_0,k_1,k_2,\gamma,\varepsilon_,\varepsilon_1$ as in Section 2. 
	
	Let $\varepsilon\in (0,1)$ be so small  that
	$$g(1-\varepsilon)>4\varepsilon,$$ 
	$$\varepsilon<
	\min\left\lbrace 
	\frac{\varepsilon_1}{2(1+\varepsilon_0)}, 
	\frac{\varepsilon_1}{2(d+\varepsilon_1)}
	\right\rbrace 
	$$
	and
	\begin{equation}\label{eps-abm}
		\varepsilon< \frac{1}{(d+\varepsilon_1)\left(  1+4\frac{d+\varepsilon_1}{c-\varepsilon_1}\right)(1+d+\varepsilon_1)^m}.
	\end{equation}
	
	Suppose that $a,b,f$ are given so that conditions (1),(2),(3)
	of Theorem \ref{main_new} are satisfied. 
	Then  conditions (i),(ii) and (iii) of 
	Theorem \ref{main0}  hold since $\varepsilon<\frac{\varepsilon_1}{2(1+\varepsilon0)}<\varepsilon_1$. 
	
	Combining the fact $g(\xi)=0$ for $\xi>1$, condition (2) of Theorem \ref{main_new}, $\varepsilon_1<\varepsilon_0$, 
	the choice of $\varepsilon$, we obtain 
	$$ b\|f\|_{[1+\varepsilon_0,\kappa_2]}\le 
	b\|f\|_{[1+\varepsilon,\kappa_2]}
	<b\varepsilon<b\frac{\varepsilon_1}{2(d+\varepsilon_1)}
	<\frac{\varepsilon_1}{2},  
	$$ 
	that is, condition (iv) in Theorem \ref{main0} is valid.

	In order to show (v) of Theorem \ref{main0}, from $\varepsilon<\varepsilon_1<\varepsilon_0$ obviously 
	$\|f'\|_{[1+\varepsilon_0,\kappa_2]}\leq \|f'\|_{[1+\varepsilon,\kappa_2]}$.

	Observe that  $\|f-g\|_{[\kappa_1,1-\varepsilon]}<\varepsilon$ and 
	$g(1-\varepsilon)>4\varepsilon$ imply 
	$f(1-\varepsilon)> 3\varepsilon$. 
	From $\|f\|_{[1+\varepsilon,\kappa_2]}=\|f-g\|_{[1+\varepsilon,\kappa_2]}<\varepsilon$ one gets $f(1+\varepsilon)<\varepsilon$. 
	Consequently, there is a $\xi\in (1-\varepsilon,1+\varepsilon)$ such that 
	$f'(\xi)<-2$, 
	and thus  $\|f'\|_{[\kappa_1,\kappa_2]}>1$.  
	
	From properties (i), (ii) of Theorem \ref{main0}, and from  \eqref{eps-abm} one has
	\begin{equation}\label{eps-abm2}
		b\left(1+\frac{4b}{a}\right)(1+b)^m \varepsilon
		<
		(d+\varepsilon_1)\left(  1+4\frac{d+\varepsilon_1}{c-\varepsilon_1}\right)
		(1+d+\varepsilon_1)^m\varepsilon <1.
	\end{equation}
	
	Assuming (3) of Theorem \ref{main_new}, using the above results and  
	\eqref{eps-abm2}, it follows that
	$$
	b \left(1+\frac{4b}{a}\right)\|f'\|_{[1+\varepsilon_0,\kappa_2]}\left(1+b\|f'\|_{[\kappa_1,\kappa_2]}\right)^m  
	\leq b\left(1+\frac{4b}{a}\right)\|f'\|_{[1+\varepsilon,\kappa_2]}\left(1+b\|f'\|_{[\kappa_1,\kappa_2]}\right)^m
	$$
	$$
	= b\left(1+\frac{4b}{a}\right) \frac{\left(1+b\|f'\|_{[\kappa_1,\kappa_2]}\right)^m}{\left(\|f'\|_{[\kappa_1,\kappa_2]}\right)^m} \|f'\|_{[1+\varepsilon,\kappa_2]}\left(\|f'\|_{[\kappa_1,\kappa_2]}\right)^m
	$$
	$$
	< b\left(1+\frac{4b}{a}\right)\left(\frac{1}{\|f'\|_{[\kappa_1,\kappa_2]}}+b\right)^m\varepsilon  <b\left(1+\frac{4b}{a}\right)(1+b)^m \varepsilon<1,
	$$
	that is (v) of Theorem \ref{main0} holds. 
	This completes the proof.
\end{proof}

\subsection{Examples}\label{subsec2}
We show that if $f$ is the nonlinearity given in \eqref{proto}, and  $g(\xi)=\xi^k$, $0\le\xi\le 1$, 
$g(\xi)=0$, $\xi>1$, then, for any given $m\in\N$ and $\varepsilon,\kappa_1,\kappa_2$ with $0<\kappa_1<1-\varepsilon<1+\varepsilon<\kappa_2$, conditions (2) and (3) of Theorem \ref{main_new} 
hold provided $n$ is sufficiently large. 
Consequently, by the proof  of Theorem \ref{main_new}, if (P) holds and $a,b$ are sufficiently close to $c,d$ then the conditions of Theorems \ref{main_new} and \ref{main0} 
are satisfied for the prototype nonlinearity \eqref{proto} provided 
$n$ is large enough.


Let $k>0$ be fixed, and for the parameter $n\ge k$ consider 
$$f_n(\xi)=\frac{\xi^k}{1+\xi^n}\quad (\xi\ge 0), $$ 
and
$$
g(\xi)=\begin{cases}
	\xi^k&\text{ if } \xi\in[0,1],\\
	0&\text{ if } \xi>1.
\end{cases}
$$
Function $g$ satisfies condition (G), and 
$$
\lim_{n\to\infty}f_n(\xi)=g(\xi)
$$
for all $\xi \in[0,\infty)\setminus \{1\}$.

\begin{claim*} 
	Let $m\in\N$ and $\varepsilon,\kappa_1,\kappa_2$ be given constants such that 
	$$
	0<\kappa_1<1-\varepsilon<1+\varepsilon<\kappa_2.
	$$
	There exists  an 
	$N>0$ so that for  the function $f=f_n$ assumption (F) 
	and conditions (2),(3) of Theorem \ref{main_new} are satisfied provided  $n\geq N$.
\end{claim*}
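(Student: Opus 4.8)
The plan is to verify, one at a time, assumption (F) and conditions (2) and (3) of Theorem \ref{main_new}, in each case reducing everything to the explicit formulas for $f_n$ and $f_n'$, and then to take $N$ as the maximum of the finitely many thresholds produced. Assumption (F) is the cheapest: continuity on $[0,\infty)$, $C^1$-smoothness on $(0,\infty)$ and $f_n(0)=0$ are immediate, while $f_n([0,\infty))\subset[0,1]$ and $f_n(\xi)<1$ for $\xi>1$ follow from $\xi^k\le 1+\xi^n$ --- for $\xi\le 1$ one has $\xi^k\le 1\le 1+\xi^n$, and for $\xi\ge 1$ one has $\xi^k\le\xi^n\le 1+\xi^n$, strictly when $\xi>1$. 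Hence (F) holds for every $n\ge k$.

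For condition (2) I would split $[\kappa_1,1-\varepsilon]\cup[1+\varepsilon,\kappa_2]$ into its two pieces. On $[\kappa_1,1-\varepsilon]$, using $g(\xi)=\xi^k$ and $\xi\le 1-\varepsilon<1$,
\[
|f_n(\xi)-g(\xi)|=\xi^k\,\frac{\xi^n}{1+\xi^n}\le\xi^{k+n}\le(1-\varepsilon)^n ,
\]
while on $[1+\varepsilon,\kappa_2]$, where $g\equiv 0$,
\[
|f_n(\xi)-g(\xi)|=\frac{\xi^k}{1+\xi^n}\le\frac{\kappa_2^k}{(1+\varepsilon)^n}.
\]
Both right-hand sides tend to $0$ as $n\to\infty$, so there is an $N_2$ with $\|f_n-g\|_{[\kappa_1,1-\varepsilon]\cup[1+\varepsilon,\kappa_2]}<\varepsilon$ for $n\ge N_2$.

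Condition (3) is the substantive one. I would first record
\[
f_n'(\xi)=\frac{\xi^{k-1}\bigl(k+(k-n)\xi^n\bigr)}{(1+\xi^n)^2}=\frac{k\xi^{k-1}}{1+\xi^n}-\frac{n\,\xi^{k+n-1}}{(1+\xi^n)^2},
\]
and then bound the two factors of (3) separately. Set $C_1=\max\{\kappa_1^{k-1},\kappa_2^{k-1}\}$, so that $\xi^{k-1}\le C_1$ on $[\kappa_1,\kappa_2]$ in both cases $k\ge 1$ and $0<k<1$. Substituting $u=\xi^n$ gives $\frac{n\xi^n}{(1+\xi^n)^2}=\frac{nu}{(1+u)^2}\le\frac{n}{4}$, hence $\|f_n'\|_{[\kappa_1,\kappa_2]}\le C_1\bigl(k+\frac{n}{4}\bigr)\le K_0\,n$ for $n\ge 4k$, with $K_0$ independent of $n$. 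On $[1+\varepsilon,\kappa_2]$ instead $\frac{1}{1+\xi^n}\le(1+\varepsilon)^{-n}$ and $\frac{\xi^n}{(1+\xi^n)^2}\le\xi^{-n}\le(1+\varepsilon)^{-n}$, so $\|f_n'\|_{[1+\varepsilon,\kappa_2]}\le C_1(k+n)(1+\varepsilon)^{-n}\le 2K_0\,n\,(1+\varepsilon)^{-n}$. Multiplying,
\[
\|f_n'\|_{[1+\varepsilon,\kappa_2]}\bigl(\|f_n'\|_{[\kappa_1,\kappa_2]}\bigr)^m\le 2K_0^{\,m+1}\,n^{\,m+1}(1+\varepsilon)^{-n}\longrightarrow 0\qquad(n\to\infty),
\]
so (3) holds for $n$ beyond some $N_3$; taking $N=\max\{4k,N_2,N_3\}$ then finishes the proof.

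All the estimates above are elementary, and there is no real obstacle --- the point worth highlighting is precisely \emph{why} they close. The norm $\|f_n'\|_{[\kappa_1,\kappa_2]}$ does \emph{not} stay bounded as $n\to\infty$ (it grows like $n$, because of the steepening of $f_n$ near the kink of $g$ at $\xi=1$), so (3) could not hold if it were a plain $C^1$-closeness requirement on $[\kappa_1,\kappa_2]$; it is the exponential smallness of $\|f_n'\|_{[1+\varepsilon,\kappa_2]}$, combined with the product form in which condition (3) of Theorem \ref{main_new} is stated, that defeats the polynomial factor $\bigl(\|f_n'\|_{[\kappa_1,\kappa_2]}\bigr)^m$. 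The only bookkeeping detail is that the constant $K_0$ must not depend on $n$, which is guaranteed by $\kappa_1,\kappa_2$ being fixed and positive, so that $\xi^{k-1}$ is bounded on $[\kappa_1,\kappa_2]$ regardless of the sign of $k-1$.
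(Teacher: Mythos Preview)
Your proof is correct and follows essentially the same approach as the paper's: verify (F) directly from $\xi^k\le 1+\xi^n$ for $n\ge k$, split condition (2) into the two subintervals with the obvious bounds, and for condition (3) bound $\|f_n'\|_{[\kappa_1,\kappa_2]}$ linearly in $n$ while $\|f_n'\|_{[1+\varepsilon,\kappa_2]}$ is exponentially small, so that the product tends to $0$. Your constants differ slightly (you use the sharper $\frac{u}{(1+u)^2}\le\frac14$ where the paper uses a cruder estimate), but the structure is identical.
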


\begin{proof}
	It is clear that $f_n(\xi)\leq 1$ for all $\xi\geq 0$, and $f_n(\xi)<1$ for $\xi>1$ since $n\ge k$.
	
	For condition (2) of Theorem \ref{main_new}
	we have to estimate $|g(\xi)-f_n(\xi)|$. 
	If $\xi\le1-\varepsilon$, then 
	$$
	|g(\xi)-f_n(\xi)|=\left|\xi^k \frac{\xi^n}{1+\xi^n}\right|
	\leq \xi^n\le (1-\varepsilon)^n<\varepsilon ,\quad n>n_1(\varepsilon).
	$$ 
	provided $n\ge N_1$ for some $N_1>0$. 
	
	If $\xi\geq 1+\varepsilon$ then  
	$$|g(\xi)-f_n(\xi)|=
	f_n(\xi)\leq\frac{\xi^k}{1+\xi^k \xi^{n-k}}=\frac{1}{\frac{1}{\xi^k}+\xi^{n-k}}<\frac{1}{\xi^{n-k}}<\frac{1}{(1+\varepsilon)^{n-k}}<\varepsilon
	$$
	whenever $n\ge N_2$ for some $N_2\ge k$. 
	
	As $f_n'(\xi)=\frac{k\xi^{k-1}}{1+\xi^n}-\frac{n\xi^k \xi^{n-1}}{(1+\xi^n)^2}$,  for $\xi>0$, by $n\ge k$  we have 
	$$
	|f'_n(\xi)|\leq \xi^{k-1}(k+n)
	\leq 2n \max\{\kappa_1^{k-1},\kappa_2^{k-1}\}=K_0 n  \quad (\xi \in [\kappa_1,\kappa_2]),
	$$ 
	where $K_0=2 \max\{\kappa_1^{k-1},\kappa_2^{k-1}\}$. 
	For $\xi\in [1+\varepsilon,\kappa_2]$
	\begin{align*}
		|f_n'(\xi)| & \leq \frac{1}{1+\xi^n}\left| k \xi^{k-1}-n\xi^{k-1} \frac{\xi^n}{1+\xi^n}\right|\\
		& \leq \frac{1}{1+\xi^n}(n+k)\xi^{k-1}\leq (n+k) \max\{1,\kappa_2^{k-1}\} \frac{1}{1+(1+\varepsilon)^n}\\
		& \leq K_0   \frac{n}{1+(1+\varepsilon)^n}.\\
	\end{align*} 
	Condition (3) requires 
	$$
	\|f'\|_{[1+\varepsilon,\kappa_2]}\left(\|f'\|_{[\kappa_1,\kappa_2]}\right)^m<\varepsilon
	$$
	which follows from
	$$
	\dfrac{(K_0 n)^{m+1}}{1+(1+\varepsilon)^n}<\varepsilon.
	$$
	It is elementary  that the last inequality holds if $n\ge N_3$ 
	for some sufficiently large $N_3\ge k$. 
	
	Choosing $N=\max\{N_1,N_2,N_3 \}$, the Claim is satisfied. 
\end{proof}

Notice that if (P) holds, then the constants $p_m$, $p_M$, $\kappa_1$,$\kappa_2$, $m$, $k_0$, $\delta_0$, $\delta_1$,$\mu$, $k_2$, $\delta_2$,$\gamma$, $\varepsilon_0$, $\varepsilon_1$, $\sigma_0$, $\sigma_1$ can be  explicitely constructed in terms of  
the properties of the periodic solution $p$ of Equation \eqref{eqn:Eg}. 
The constant $\varepsilon >0$ stated in Theorem \ref{main_new} is given 
as a function of $g$, $c$, $d$, $\varepsilon_0$, $\varepsilon_1$ and $m$, see the proof of  Theorem \ref{main_new}. 
Consequently, a threshold  $N$ can be explicitely given from the properties of $p$ in (P) such that a stable periodic orbit of Equation \eqref{eqn:Ef}  exists with the 
nonlinearity $f(\xi)=\frac{\xi^k}{1+\xi^n}$ for all $n\ge N$, provided 
$a,b$ are close enough to $c,d$, respectively.


\bigskip

For a fixed $k>0$, another example for a pair of functions 
$f=\tilde f_n$, $g$ is 
$$
\tilde f_n(\xi)=\xi^k e^{-x^n} \quad (\xi\ge 0), \qquad
g(\xi)=\begin{cases}
	\xi^k&\text{ if } \xi\in[0,1],\\
	0&\text{ if } \xi>1
\end{cases}
$$
with parameter $n>0$. 
The above Claim is valid for $f=\tilde f_n$ as well, provided $n$ is suffieciently large. The proof is analogous to the above one, so it is omitted. 

\section{Property (P): an analytic proof}\label{sec4}

We show that if $g$ satisfies (G) and $d$ is large compared to $c$, then property (P) holds. 
Define 
$$
I_{cg}=\int_1^2 e^{cs}g\left( e^{-c(s-1)}\right)\, ds.
$$

\begin{theorem}\label{verifyP}
	If $c>0$, $d>0$ and $g$ are given so that (G) and 
	$$
	d>\max \left\{ \dfrac{e^{2c}-1}{I_{cg}}, \dfrac{c}{g(e^{-c})}\right\}
	$$
	are satisfied, then property (P) holds. 
\end{theorem}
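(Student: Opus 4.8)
The plan is to follow the recipe of Remark~\ref{remark}(2): starting from $\phi(s)=e^{-c(1+s)}$, let $x$ be the solution of \eqref{eqn:Eg} with $x(t)=e^{-ct}$ on the initial interval $[0,1]$, and look for the first $\omega>1$ with $x(\omega)=1$ and $x(\omega+s)>1$ for all $s\in[-1,0)$; then $p$ is the $\omega$-periodic extension of $x|_{[0,\omega]}$, $\omega_p=\omega$, and (P1) holds by construction, so it remains to exhibit such a minimal $\omega$ and to verify (P2). Two elementary consequences of the hypotheses drive everything. First, the variation-of-constants formula on $[1,2]$ gives $x(2)=e^{-2c}(1+dI_{cg})$, so $d>(e^{2c}-1)/I_{cg}$ forces $x(2)>1$. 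Second, $d>c/g(e^{-c})$ together with the monotonicity of $g$ on $(0,1)$ yields the sign condition $dg(\xi)>c$ for every $\xi\in[e^{-c},1)$, i.e.\ a negative-feedback structure with respect to the level $1$.

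Next I would track $x$ qualitatively. On $[0,1]$, $x(t)=e^{-ct}$ decreases from $1$ to $e^{-c}$. For $t\in(1,2)$ one has $x(t-1)=e^{-c(t-1)}\in(e^{-c},1)$, hence (by Proposition~\ref{prop:exist} and the sign condition) $x$ is differentiable there and $x'(t)=-cx(t)+dg(e^{-c(t-1)})>-cx(t)+c$, so $x'>0$ as long as $x<1$; since $x(1)=e^{-c}<1<x(2)$, there is a first crossing $t_0\in(1,2)$, it is transversal because $x'(t_0)=-c+dg(e^{-c(t_0-1)})>0$, and $x<1$ on $(1,t_0)$. On $(t_0,t_0+1]$ the delayed argument still lies in $[e^{-c},1)$, so $dg(x(s-1))>c$, and the integral equation \eqref{inteq-ei} with base point $t_0$ gives $x(t)>e^{-c(t-t_0)}\cdot 1+\int_{t_0}^{t}e^{-c(t-s)}c\,ds=1$; thus $x>1$ on $(t_0,t_0+1]$. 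For $t>t_0+1$ the delayed value satisfies $x(t-1)>1$, hence $g(x(t-1))=0$ and $x'(t)=-cx(t)$, so $x$ decays exponentially; let $\omega$ be the first time after $t_0+1$ at which $x=1$. Then $\omega>t_0+1>2$, $x>1$ on $(t_0,\omega)$, and in particular $x>1$ on $[\omega-1,\omega)$; a short case check (using $x(0)=1$, $x<1$ on $(0,t_0)$, $x>1$ on $(t_0,\omega)$, and $\phi<1$ on $(-1,0)$) shows that this $\omega$ is minimal, so $p$ and $\omega_p=\omega$ are as claimed and (P1) holds.

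Finally, for (P2): on $[0,\omega_p]$ the relation $p(t)=1$ holds only for $t\in\{0,t_0,\omega_p\}$. At $t=0$ and $t=\omega_p$ we have $p(t-1)=x(\omega_p-1)>1>\xi_0$, and at $t=t_0$ we have $p(t_0-1)=e^{-c(t_0-1)}$; were this equal to $\xi_0=g^{-1}(c/d)$, we would get $g(e^{-c(t_0-1)})=c/d$, contradicting $g(e^{-c(t_0-1)})>g(e^{-c})>c/d$ (monotonicity of $g$, since $e^{-c(t_0-1)}>e^{-c}$, and the hypothesis). Hence $(p(t),p(t-1))\neq(1,\xi_0)$ for all $t\in[0,\omega_p]$, which is (P2), and property (P) follows. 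I expect the main obstacle to be the qualitative tracking in the middle paragraph — proving that $x$ crosses $1$ exactly once on $(1,2)$, transversally, and then stays above $1$ for a full delay before the feedback switches off — since this is where the two hypotheses must be used in an essential way and where spurious oscillations must be ruled out; as an alternative route, once a periodic solution with (P1) is in hand, Lemma~\ref{lemma1} furnishes $p\ge e^{-c}$ on $\mathbb{R}$, after which the negative-feedback picture together with standard slow-oscillation arguments yields the conclusion.
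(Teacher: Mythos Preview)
Your proof is correct and follows essentially the same route as the paper's: build $x$ from $x(t)=e^{-ct}$ on $[0,1]$, use $d>(e^{2c}-1)/I_{cg}$ to get $x(2)>1$ and hence a unique transversal crossing $t_0\in(1,2)$, show $x>1$ on $(t_0,t_0+1]$, then let the feedback switch off and decay to the first return $\omega$, and finally verify (P2). Your execution is in fact slightly leaner than the paper's in two places --- you integrate from $t_0$ (rather than splitting at $t=2$ and invoking $1+dI_{cg}>e^{2c}$ a second time) to get $x>1$ on $(t_0,t_0+1]$, and you verify (P2) directly at the three zeros of $p-1$ instead of appealing to Lemma~\ref{lemma1} for the global bound $p\ge e^{-c}$ --- but these are cosmetic differences within the same argument.
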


First we show the following statement. 

\begin{lemma}\label{lemma1}
	If $p$ is a periodic solution of Equation \eqref{eqn:Eg} with property 
	(P1), and $dg(e^{-c})>c$, then $p(t)\ge e^{-c}$ for all $t\in\R$. 
\end{lemma}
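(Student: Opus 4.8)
The plan is to turn the claim into a ``lower barrier'' statement on $[1,\infty)$ and then extend it by periodicity. By (P1) and Equation \eqref{eqn:Eg} the feedback term vanishes on $[0,1)$ (since $p(t-1)>1$ there), so $p(t)=e^{-ct}$ on $[0,1]$; in particular $p(1)=e^{-c}$ and $p(t)\ge e^{-c}$ on $[0,1]$. As $p$ is $\omega_p$-periodic it then suffices to show $p(t)\ge e^{-c}$ for all $t\ge 1$. Suppose this fails and put $\tau=\inf\{t\ge 1:\ p(t)<e^{-c}\}$, which is finite; by continuity $p\ge e^{-c}$ on $[0,\tau]$, $p(\tau)=e^{-c}$, and $p(t)<e^{-c}$ for a sequence $t\downarrow\tau$. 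First I would note $\tau>1$: the right‑hand limit of $g(p(t-1))$ at $t=1$ is $\lim_{\xi\to1-}g(\xi)\ge g(e^{-c})>c/d$ (here $dg(e^{-c})>c$ first enters, via monotonicity of $g$ on $(0,1)$), so the right derivative of $p$ at $1$ is at least $-ce^{-c}+dg(e^{-c})>0$ and $p$ rises above $e^{-c}$ just past $t=1$.

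The decisive case split is on the delayed value $p(\tau-1)$, which lies in $[e^{-c},\infty)$ because $\tau-1\in(0,\tau)$ and $p\ge e^{-c}$ on $[0,\tau]$. \emph{Case $p(\tau-1)<1$.} Then $p(\tau-1)\in[e^{-c},1)$, hence $g(p(\tau-1))\ge g(e^{-c})>c/d>e^{-c}c/d$, and by continuity $g(p(s-1))\ge e^{-c}c/d$ for $s$ in a right neighbourhood of $\tau$. Starting the integral form of \eqref{eqn:Eg} at $\tau$ gives, for such $t$,
$$p(t)=e^{-c}e^{-c(t-\tau)}+d\int_\tau^t e^{-c(t-s)}g(p(s-1))\,ds\ \ge\ e^{-c}e^{-c(t-\tau)}+e^{-c}\bigl(1-e^{-c(t-\tau)}\bigr)=e^{-c},$$
contradicting $p(t)<e^{-c}$ arbitrarily close to the right of $\tau$. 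Thus $e^{-c}$ is a genuine lower barrier as long as the delayed value stays below $1$; this is the conceptual heart of the proof and is exactly what $dg(e^{-c})>c$ buys.

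\emph{Case $p(\tau-1)\ge 1$.} Then $\tau>2$ (as $p\le1$ on $[0,1]$), and $\tau-1$ sits in a connected component $(\alpha,\sigma')$ of $\{p>1\}$ with $1<\alpha<\tau-1<\sigma'$, and $\sigma'<\tau$ because $p(\tau)=e^{-c}<1$; the boundary subcase $p(\tau-1)=1$ reduces to this or to the previous case by a one‑sided limit. On $[\tau,\sigma'+1]$ the delayed argument exceeds $1$, so the feedback vanishes and $p(t)=e^{-c}e^{-c(t-\tau)}<e^{-c}$. I would finish by a further dichotomy: if $\sigma'-\alpha\ge 1$ then $p>1$ on $[\sigma'-1,\sigma')$ and $p(\sigma')=1$, so by Remark \ref{remark} and minimality of $\omega_p$ the point $\sigma'$ is an integer multiple of $\omega_p$, whence $p(\sigma'+1)=p(1)=e^{-c}$, contradicting $p(\sigma'+1)=e^{-c}e^{-c(\sigma'+1-\tau)}<e^{-c}$; if $\sigma'-\alpha<1$ then $p$ crosses $1$ downward at $\sigma'$, so (differentiability holding there unless $p(\sigma'-1)=1$) $p'(\sigma')\le0$ forces $g(p(\sigma'-1))\le c/d$, i.e. $p(\sigma'-1)\le\xi_0$; but $dg(e^{-c})>c$ gives $\xi_0=g^{-1}(c/d)<e^{-c}$, while $\sigma'-1\in(0,\tau)$, where $p\ge e^{-c}$ — a contradiction. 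The residual degenerate possibilities ($p(\sigma'-1)=1$, or $p(\sigma'-1)>1$ belonging to an earlier component of $\{p>1\}$) are closed by iterating this argument along the strictly decreasing sequence of unit‑shifted hitting times of the level $1$, together with a limiting argument. I expect this Case to be the main obstacle: once the delayed value climbs above $1$ the equation loses its stabilizing feedback and becomes pure exponential decay, so the barrier estimate of the first case collapses and one must instead exploit the rigidity of (P1) — the forced shape $p=e^{-c(\cdot-\sigma)}$ after any time $\sigma$ at which $p$ descends to $1$ through a full unit interval — and the minimality of $\omega_p$; the bookkeeping of non‑differentiability at delay crossings and of several super‑level‑$1$ components is what makes it technical.
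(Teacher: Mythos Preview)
Your approach differs substantially from the paper's. The paper proves a forward-induction step---``$p\ge e^{-c}$ on $[0,t_0]$ implies $p>e^{-c}$ on $(t_0,t_0+1]$''---in one chain of inequalities, replacing $g(p(s-1))$ by $g(e^{-c})$ in the variation-of-constants formula to obtain $p(t)\ge 1-e^{-c(t-t_0)}(1-e^{-c})>e^{-c}$. That replacement tacitly uses $g(p(s-1))\ge g(e^{-c})$, which is clear only when $p(s-1)\in[e^{-c},1)$; when $p(s-1)>1$ (where $g$ vanishes) the inequality fails, and the paper does not address this. Your Case 1 is exactly the paper's estimate localized at the first escape time $\tau$, and your Case 2 is an attempt to treat precisely the situation the paper's induction glosses over.

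Your Case 2, however, is not a proof. In the sub-case $\sigma'-\alpha<1$ the implication ``$g(p(\sigma'-1))\le c/d$ hence $p(\sigma'-1)\le\xi_0$'' requires $p(\sigma'-1)<1$; when $p(\sigma'-1)>1$ you propose iterating through earlier components of $\{p>1\}$ but do not carry it out, and the equality cases $\sigma'-\alpha=1$ and $p(\sigma'-1)=1$ are deferred to an unspecified ``limiting argument''. The iteration does terminate---there are finitely many components in $(0,\tau)$ since $p_t\in C^+_*$---and at the terminal step one expects either Remark \ref{remark}(3) to fire (component of length $\ge 1$, noting $\sigma'<\tau<\omega_p$) or the delayed value to land in $[e^{-c},1)$ so that $\xi_0<e^{-c}$ gives the contradiction. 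But this terminal analysis and the boundary cases are the substance of Case 2; until they are written out, the argument is a plausible plan rather than a proof.
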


\begin{proof}
	As $p(t)>1$ for $t\in[-1,0)$ and $p(t)=e^{-ct}$ for $t\in[0,1]$, 
	it is sufficient to prove that, for any $t_0\ge 1$, 
	if 
	$$
	p(t)\ge e^{-c} \text{ for all } t\in[0,t_0],
	$$
	then 
	$$
	p(t)>e^{-c} \text{ for all } t\in (t_0,t_0+1].
	$$
	Apply \eqref{inteq-ei} with $\tau=t_0$ and $t\in [t_0,t_0+1]$, and use 
	$dg(e^{-c})>c$ to get
	\begin{align*}
		p(t) & = e^{-c(t-t_0)}p(t_0) + d\int_{t_0}^t e^{-c(t-s)}g(p(s-1))\, ds\\
		& \ge e^{-c(t-t_0)}e^{-c} + d\int_{t_0}^t e^{-c(t-s)}g(e^{-c})\, ds\\
		& > e^{-c(t-t_0)}e^{-c} + \int_{t_0}^t c e^{-c(t-s)}\, ds\\
		& = 1- e^{-c(t-t_0)}[1-e^{-c}] \\
		& > 1- [1-e^{-c}]=e^{-c},
	\end{align*}
	and the proof is complete. 
\end{proof}

\begin{proof}[Proof of Theorem \ref{verifyP}]
	In order to find a periodic solution $p$ of \eqref{eqn:Eg} satisfying (P1), it is sufficient to find a solution 
	$x:[-1,\infty)\to\R$ of  \eqref{eqn:Eg} such that 
	$x(t)=e^{-ct}$ for $t\in[0,1]$, and there is a $t_1>1$ with $x(t_1)=1$ and $x(t)>1$ for $t\in[t_1,t_1+1]$. Indeed, in this case $x_{t_1+1}=x_1$, and a
	periodic extension of $x|_{[0,t_1]}$ defines a $t_1$-periodic 
	$p:\R\to\R$ with (P1).  
	
	Suppose  $x(t)=e^{-ct}$ for $t\in[0,1]$. 
	Then, for all $t\in(1,2)$, $x(t-1)=e^{-c(t-1)}>e^{-c}$, and, by \eqref{eqn:Eg} and condition $dg(e^{-c})>c$, we get
	\begin{equation}\label{x'positive}
		x'(t)  = -cx(t)+dg(x(t-1)) > -cx(t)+dg(e^{-c})>-cx(t)+c.
	\end{equation}	
	Thus, if $t\in(1,2)$ and $x(t)=1$, then $x'(t)>0$. 
	Consequently, there is at most one $t_0\in(1,2)$ with $x(t_0)=1$. 
	For the existence of such a $t_0$, apply \eqref{inteq-ei} with $\tau=0$ and  $t\in[1,2]$ to find 
	$$
	x(t)  = e^{-c(t-1)}x(1)+d\int_{1}^{t}e^{-c(t-s)}g(e^{-c(s-1)})\, ds,
	$$
	and
	$$
	x(2)=e^{-2c}\left[1+dI_{cg}\right].
	$$
	Condition $d>\frac{e^{2c}-1}{I_{cg}}$ yields  $1+dI_{cg}>e^{2c}$ and $x(2)>1$. 
	Hence $x(1)=e^{-c}<1<x(2)$, and we conclude that there exists a unique 
	$t_0\in (1,2)$ with $x(t_0)=1$. 
	
	From the uniqueness of $t_0\in (1,2)$ with $x(t_0)=1$ it can be obtained 
	that  $x(t)>1$ for all $t\in (t_0,2]$. In addition, we clearly have 
	$x(t)\in [e^{-c},1]$ for all $t\in [1,t_0]$. 
	
	Apply \eqref{inteq-ei} with $\tau=2$ and  $t\in (2,t_0+1]$, use $x(2)=
	e^{-2c}[1+I_{cg}]$, and 
	$x(t)\in [e^{-c},1]$ for $t\in [1,t_0]$, and 
	the conditions  $dg(e^{-c})>c$, $1+dI_{cg}>e^{2c}$ to get
	\begin{align*}
		x(t) & = e^{-c(t-2)}x(2)+d\int_{2}^{t}e^{-c(t-s)}g(x(s-1))\, ds\\
		& \ge e^{-ct}\left[ 1+dI_{cg}\right] 	+ e^{-ct}\int_2^t e^{cs}dg(e^{-c})\, ds\\
		& > e^{-ct}\left[ 1+dI_{cg}\right]+ e^{-ct}\int_2^t e^{cs}c\, ds\\
		& = 1 +e^{-ct}\left[ 1+dI_{cg}-e^{2c}\right] >1.
	\end{align*}
	
		Therefore, $x(t)>1$ for all $t\in (t_0,t_0+1]$. 
		Then, clearly, there exists a $t_1>t_0+1$ such that 
		$$
		x(t_1)=1,\quad x(t)>1 \text{ for all }t\in [t_1-1,t_1).
		$$
		Then, by Equation \eqref{eqn:Eg}, $x(t)=e^{-c(t-t_1)}$, 
		$t\in [t_1,t_1+1]$, 
		that is 
		$$x_{t_1}=x_0\quad\text{and}\quad x_{t_1+1}=x_1.$$
		so the restriction of $x$ to $ [0,t_1] $ can be extended to a $t_1$-periodic solution $p:\R\to\R$ of Equation \eqref{eqn:Eg} with minimal period 
		$\omega_p=t_1>2$. Obviously, (P1) holds. 
		
		Property (P2) is a consequence of Lemma \ref{lemma1} and 
		$$
		p(t-1)\ge e^{-c}>g^{-1}\left(\dfrac{c}{d}\right)=\xi_0 \quad (t\in\R).
		$$  
	\end{proof}

	In the case $g(\xi)=\xi^k$, $0\le\xi\le1$, for some $k>0$, 
	the expression $I_{cg}$  has a simple form:
	$$
	I_{cg}=\begin{cases}
		e^c&\text{ if } k=1,\\
		{e^c}\frac{1-e^{-(k-1)c}}{(k-1)c}
		&\text{ if } k\in(0,1)\cup(1,\infty).
	\end{cases}
	$$
	
	\begin{corollary}
		Let $k>0$,   $g(\xi)=\xi^k$ for $0\le\xi\le1$, and $g(\xi)=0$ for $\xi>0$. 
		Then property (P) holds if 
		$$
		d>\max\{e^c-e^{-c},ce^c\}\ \text{ in case }\ k=1,
		$$
		and 
		$$
		d > \max \left\{  (e^c-e^{-c})
		\frac{(k-1)c}{1-e^{-(k-1)c}}, ce^{kc} 
		\right\}\ \text{ in case }\ k\ne 1.
		$$
	\end{corollary}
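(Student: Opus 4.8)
The plan is to derive the Corollary directly from Theorem~\ref{verifyP} by specialising $g$ to $g(\xi)=\xi^k$ on $[0,1]$ and evaluating the two quantities appearing in the lower bound for $d$. First I would check that this $g$ satisfies hypothesis~(G): the restriction $g|_{(0,1)}(\xi)=\xi^k$ is $C^1$-smooth with $g'(\xi)=k\xi^{k-1}>0$ on $(0,1)$, one has $g(0)=0=\lim_{\xi\to0+}\xi^k$ since $k>0$, and $\limsup_{\xi\to1-}g'(\xi)=k<\infty$; finally $g(\xi)=0$ for $\xi>1$ by definition, and the value $g(1)$ plays no role. Thus Theorem~\ref{verifyP} applies once we identify $c/g(e^{-c})$ and $(e^{2c}-1)/I_{cg}$.

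Next I would compute $g(e^{-c})=e^{-kc}$, so that $c/g(e^{-c})=ce^{kc}$, which is the second entry of the maximum in both cases. The remaining and only genuinely computational step is $I_{cg}=\int_1^2 e^{cs}g\bigl(e^{-c(s-1)}\bigr)\,ds$. Here one must note that for $s\in[1,2]$ the argument $e^{-c(s-1)}$ lies in $[e^{-c},1]\subset[0,1]$, so that $g\bigl(e^{-c(s-1)}\bigr)=e^{-kc(s-1)}$ and the substitution of the power $\xi^k$ is legitimate. Writing $e^{cs}e^{-kc(s-1)}=e^{kc}e^{(1-k)cs}$ gives $I_{cg}=e^{kc}\int_1^2 e^{(1-k)cs}\,ds$.

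Then I would split according to whether $k=1$ or $k\neq1$. If $k=1$ the integrand is constant and $I_{cg}=e^c$, whence $(e^{2c}-1)/I_{cg}=e^c-e^{-c}$, and combining with $ce^c$ yields the stated bound. If $k\neq1$ (covering both $k\in(0,1)$ and $k>1$ with the same formula, since $\tfrac{1-e^{-(k-1)c}}{(k-1)c}>0$ in either case), elementary integration gives $I_{cg}=e^{kc}\cdot\frac{e^{2(1-k)c}-e^{(1-k)c}}{(1-k)c}=\frac{e^c\bigl(1-e^{-(k-1)c}\bigr)}{(k-1)c}$, so that $(e^{2c}-1)/I_{cg}=(e^c-e^{-c})\cdot\frac{(k-1)c}{1-e^{-(k-1)c}}$; combining with $ce^{kc}$ gives the second stated bound. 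Since both expressions are positive, the requirement $d>0$ in Theorem~\ref{verifyP} is automatically met, and property~(P) follows.

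There is no serious obstacle here: the argument is a verification of hypothesis~(G) followed by an explicit evaluation of a single elementary integral. The only points requiring a little care are (i) confirming that the argument of $g$ inside $I_{cg}$ never leaves $[0,1]$, so that the closed form $g(\xi)=\xi^k$ may be used throughout the integration, and (ii) treating the degenerate exponent $k=1$ separately from $k\neq1$ while checking that the $k\neq1$ formula is valid for both $k<1$ and $k>1$.
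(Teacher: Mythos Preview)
Your proposal is correct and follows exactly the approach the paper takes: the paper computes $I_{cg}$ explicitly just before stating the Corollary and then leaves the Corollary without a separate proof, since it is an immediate substitution into Theorem~\ref{verifyP}. Your verification of (G), the observation that $e^{-c(s-1)}\in[e^{-c},1]$ so that $g(e^{-c(s-1)})=e^{-kc(s-1)}$ is legitimate, and your integration (with the $k=1$ case separated) reproduce precisely the paper's formula $I_{cg}=e^c\frac{1-e^{-(k-1)c}}{(k-1)c}$ for $k\neq1$ and $I_{cg}=e^c$ for $k=1$.
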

	
	The above condition for $k=1$ was obtained in \cite{BKV}. For $k=2$, 
	$$d>\max\left\lbrace c(e^c+1),ce^{2c}\right\rbrace;$$ and for $k=\frac{1}{2}$,  
	$$d>\max\left\lbrace \frac{c}{2}(1+e^{-c})(1+e^{c/2}),ce^{c/2}\right\rbrace$$
	guarantee property (P). 
	
	\begin{figure}[h]
		\centering
		\includegraphics[width=1\linewidth]{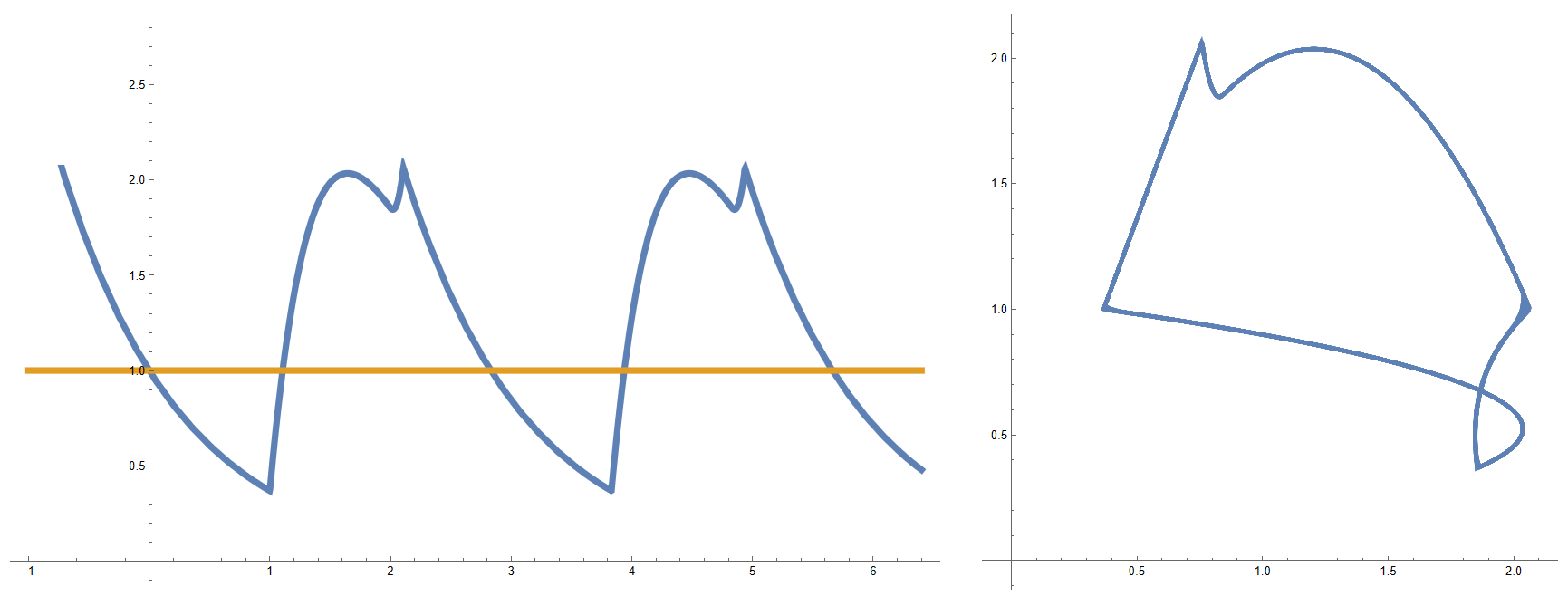}
		\caption{Plots of solution and their projection with the map $t\mapsto(p(t),p(t-1))$ in the case $k=2$ with parameter values $c=1$ and $d=7.38907)$.}
		\label{fig:figk2}
	\end{figure}

	\begin{figure}[h]
		\centering
		\includegraphics[width=1\linewidth]{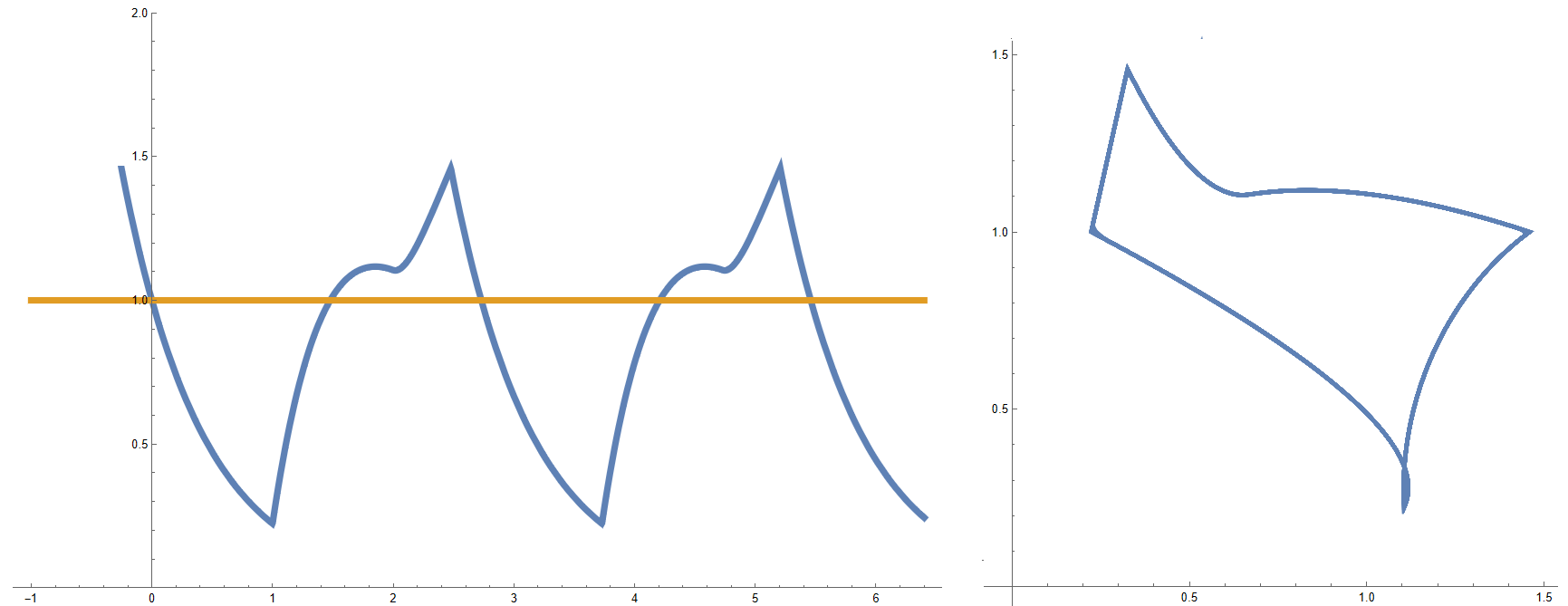}
		\caption{Plots of solution and their projection with the map $t\mapsto(p(t),p(t-1))$ in the case  $k=\frac{1}{2}$ with parameter values $c=1.5$, and $d=3.1756$.}
		\label{fig:figk0.5}
	\end{figure}

\section{Property (P): computer assisted proofs}\label{CAP}
We assume the reader is familiar with work
\cite{BKV}. In what follows, similar to the
aforementioned work, we will develop a method to prove,
for a fixed $g$ and given parameters $c, d$ that the
equation \eqref{eqn:Eg} has a periodic orbit as in the condition (P).
The method from \cite{BKV}
would not work for functions $g$ used in this paper, therefore we
use a general purpose algorithm for integration forward in time of
DDEs from papers \cite{SZ}, \cite{SZ2}. The method is general in the sense
that any form of $g$ given in terms of simple functions
(composition of arithmetic operators and some analytic functions) can be used.
We present examples for $g$ obtained as a limit $n \to \infty$
from our prototype nonlinearity
\eqref{proto} with $k=2$ and $k=1/2$.

Since \eqref{eqn:Eg} is discontinuous, the direct application
of the algorithm from \cite{SZ2} is not possible
and we adopt a technique  similar to that of \cite{BKV} 
to generate the segments of the solution that are smooth on explicitly
given intervals. In short, we are computing images of the constitutive 
Poincar\'e maps to sections $S_0 = \{\phi \in C([-1, 0], \R^+) : x(0) = 1\}$
and $S_1 = \{\phi \in C([-1, 0], \R^+) : x(-1) = 1\}$. The algorithm
from \cite{SZ2} will take care of providing 
the rigorous estimates on the solutions, crossing times and  
the transversality of the intersection with the section. 

We would like to acknowledge that any algorithm that can continue
solutions of DDEs forward in time can be used, as long as it can 
(a) do rigorous method of steps over full delay intervals 
and (b) it can compute intersection of
the solutions with the sections $S_0$ and $S_1$.  
Several alternatives exists in the literature:
\cite{C, LJ, RA}. The advantage of using  \cite{SZ2}
is that it provides Poincar\'e maps (point (b)) without additional
effort and it is relatively easy to obtain
estimates on segments $x_{t}$ of the solution
without the assumption that $t$ is a multiple of the time lag.

We use the following notions from \cite{BKV}.
By $\IR$ we denote the set of all closed intervals in $\R$.
Let $\phi : D \to \R$ and $[\alpha, \beta] \subset D$. Then:
\begin{align*}
	\phi_{[\alpha, \beta]} : [0, \beta - \alpha] \to \R & 
	\quad\quad \phi_{[\alpha, \beta]}(s) := \phi(\alpha + s) \\ 
	\phi_{[\alpha, \beta)} : [0, \beta - \alpha) \to \R & 
	\quad\quad \phi_{[\alpha, \beta)}(s) := \phi(\alpha + s)
\end{align*}
We define the following spaces:
\begin{align*}
	C^{>1}([0, \delta]) &= \left\{ \phi \in C([0, \delta],[1,\infty)): \forall_{s \in (0, \delta)}\ \phi(s) > 1 \right\}, \\
	C^{\le 1}([0, \delta]) &= C^\infty([0, \delta], [0,1]), \\
	C^{+}_{seg}([0, \delta]) &= C^{>1}([0, \delta]) \cup C^{\le 1}([0, \delta]), \\
	C^{+}_{r,sol} &= \left\{ \phi \in C^{+}_{r} : \exists \{ d_i \}_{i=0}^{m} \subset [-1, 0], -1 = d_0 < \ldots < d_m = 0, \right. \\   
	&  \left.  \textrm{\quad\ \ and } \phi_{[d_{i}, d_{i+1}]} \in C_{seg}([0, d_{i+1} - d_{i}]) \textrm{ for all }0 \le i < m \right\}.
\end{align*}
Here, instead of spaces $C^{\le 1}_{pol}$ and $C^{+}_{r,comp}$ from 
\cite{BKV}, we will use 
$C^\infty([0, \delta], [0,1])$ and $C^{+}_{r,sol}$, respectively. 
It is clear that 
$C^{\le 1}_{pol} \subset C^\infty([0, \delta], [0,1])$ and a similar
fact to Lemma 4.3 from \cite{BKV} can be
stated:
\begin{lemma}
	\label{lem:comp-spaces}
	Assume $g$ in \eqref{eqn:Eg} is of class $C^\infty$ with $g(\xi) \ge 0$ for $\xi \ge 0$. 
	Let $\phi \in C^{+}_{r,sol}$
	and $l \in (-1, 0]$ be such that $\psi_{[-1, l]} \in C^{+}_{seg}$. Then, 
	$x^{\phi}_{[0, l+1]} \in C^\infty([0, l+1], \R^+)$.
\end{lemma}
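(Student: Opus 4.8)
The plan is to run a single step of the method of steps on $[0,l+1]$ and then split into the two alternatives that define $C^{+}_{seg}$. Since $l\le 0$, the interval $[0,l+1]$ is contained in $[0,1]$, so for $t\in[0,l+1]$ and $s\in[0,t]$ the argument $s-1$ lies in $[-1,l]\subset[-1,0]$, where $x^\phi$ coincides with $\phi$; hence the integral equation \eqref{inteq-ei} with $\tau=0$ reads
\[
x^\phi(t)=e^{-ct}\phi(0)+d\int_0^t e^{-c(t-s)}g\bigl(\phi(s-1)\bigr)\,ds,\qquad t\in[0,l+1].
\]
Observing that the map $s\mapsto\phi(s-1)$ on $[0,l+1]$ is precisely the segment $\phi_{[-1,l]}$, the statement reduces to controlling the regularity of the forcing term $s\mapsto g(\phi_{[-1,l]}(s))$.

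First I would handle the case $\phi_{[-1,l]}\in C^{>1}([0,l+1])$. There $\phi_{[-1,l]}(s)>1$ for all $s\in(0,l+1)$, so property (G) forces $g(\phi_{[-1,l]}(s))=0$ on $(0,l+1)$; the two endpoints, where $\phi_{[-1,l]}$ may only be known to equal $1$, form a Lebesgue-null set and the value $g(1)$ does not enter the integral. Thus the integral above vanishes identically and $x^\phi(t)=e^{-ct}\phi(0)$ on $[0,l+1]$, which is real-analytic, in particular $C^\infty$. Next, in the case $\phi_{[-1,l]}\in C^{\le 1}([0,l+1])=C^\infty([0,l+1],[0,1])$, the composite $s\mapsto g(\phi_{[-1,l]}(s))$ is $C^\infty$ on $[0,l+1]$, because $g$ is assumed of class $C^\infty$ on $[0,\infty)\supset[0,1]$ and composition preserves smoothness. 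Equivalently, on $[0,l+1]$ the function $x^\phi$ solves the linear scalar ODE $x'=-cx+d\,g(\phi_{[-1,l]})$ with a $C^\infty$ inhomogeneity, so the usual bootstrap (differentiate the equation repeatedly, or apply Leibniz's rule to the displayed formula) gives $x^\phi\in C^\infty([0,l+1])$. In both cases $g\ge 0$ and $d>0$ yield $x^\phi(t)\ge e^{-ct}\phi(0)\ge 0$, so $x^\phi_{[0,l+1]}\in C^\infty([0,l+1],\R^+)$.

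I do not anticipate a genuine obstacle: this is the direct analogue of Lemma 4.3 in \cite{BKV}, with $C^{\le 1}_{\mathrm{pol}}$ replaced by $C^\infty([0,\delta],[0,1])$ and the polynomial-composition step replaced by the remark that $C^\infty$ is stable under composition. The one point deserving a sentence of care is the endpoint behaviour in the $C^{>1}$ case, where $\phi_{[-1,l]}$ is only guaranteed to be $\ge 1$ at $0$ and $l+1$ rather than $>1$; since these two points are null, they are invisible to the integral and the conclusion is unchanged.
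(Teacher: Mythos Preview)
Your argument is correct and follows the same approach as the paper: split according to the two alternatives of $C^{+}_{seg}$, reduce to $x'=-cx$ in the $C^{>1}$ case, and to a linear ODE with $C^\infty$ forcing $h=g\circ\phi_{[-1,l]}$ in the $C^{\le 1}$ case, concluding nonnegativity from $g\ge 0$ via the variation-of-constants formula. Your version simply spells out in slightly more detail the endpoint issue in the $C^{>1}$ case and the smoothness-of-composition step.
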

\begin{proof} 
	Either (a) $\phi_{[-1, l]} \in C^{>1}$
	and, in that case, $x^{\phi}(s) = x(0)   e^{-c   s}$ for $s \in [0, l + 1]$,
	or (b) $\phi_{[-1, l]} \in C^{\le 1}$ and then $x^{\phi}|_{[0, l+1]}$ 
	is a solution to a smooth ($C^\infty$) 
	non-autonomous ODE $x'(t) = -c   x(t) + d   h(t)$, with $h(t) = (g \circ \phi)(t) > 0$.
	It has a solution 
	\begin{equation}
		x^{\phi}(t) = e^{-ct}\phi(0) + d\int_0^t e^{s-t}h(s)ds \ge 0 \qquad t \in [0, l+1] \label{eq:explicit-solution}
	\end{equation}
\end{proof}

The two key features of Lemma 4.3 in \cite{BKV} 
was that (a) it provided explicit formulas for $x^{\phi}_{[0, l+1]}$ 
and (b) it guaranteed that $x^{\phi}_{[0, l+1]}$ had finite number of intersections
with the line $x = 1$. This guaranteed that $\Gamma(1, \phi) \in C^{+}_{r, comp}$
for all $\phi \in C^{+}_{r, comp}$. It was possible because $h(s)$
in \eqref{eq:explicit-solution} was amenable to almost symbolic computations
and the solutions were in the form of an exponential-polynomial products.
In the current work we need to circumvent two problems: first, of an arbitrarily complicated
$h(s)$ caused by a general representation of $C^\infty$ solution segment $\phi$ 
and second, we cannot prove \emph{a priori} $\Gamma(1, \phi) \in C^{+}_{r, sol}$,
as we do not explicitly know the number of intersections. Instead,
our algorithm will prove along the way that $x_{[k, k+1]} \in C^{+}_{r, sol}$
for $k \in \{0, 1, \ldots, K \}$ for a given $x_{[-1, 0]} \in C^{+}_{r, sol}$.
To do this, we need to introduce basic notions of the rigorous 
integration procedure from \cite{SZ2}. This algorithm 
works with the piecewise Taylor representation of solution segments. 
Let $\phi$ be some $C^{n+1}$ function,  
$\phi : [t, t + \delta) \to \R$. \emph{The forward jet of $\phi$ at $t$} is: 
\begin{equation*}
	\left(J^{[n]}_{t}\phi\right)(s) = \sum_{k=0}^{n} \phi^{[k]}(t)   s^k \quad s \in [t, t+s), 
\end{equation*}
where $\phi^{[k]}(t) = \frac{\phi^{(k)}(t)}{k!}$
are coefficients of Taylor expansion of $\phi$. 
We identify the jet with a vector in $\R^{n+1}$ in a standard way: 
$J^{[n]}_{t}\phi \equiv \left(\phi^{[0]}(t), \ldots,\phi^{[n]}(t)\right) \in \R^{n+1}$. 
This allows to write $J^{[n]}_{t} \phi \in A \subset \R^{n+1}$, and
we will use that frequently. 
\begin{definition} 
	The forward Taylor
	representation of $\phi : [t, t+\delta)$ is the pair 
	$$(J^{[n]}_{t}\phi, \xi) \subset \R^{n+1} \times C^{0}([t, t+\delta], \R)$$
	such that the Taylor formula is valid for $g$:
	\begin{equation*}
		\label{eq:forward-taylor}
		\phi(s) = \left(J^{[n]}_{t}\phi\right)(s) + (n+1) \int_t^\delta \xi(s)   (t-s)^n ds,\quad s \in [t, t + \delta)
	\end{equation*}   
\end{definition}
Obviously, $\xi(s) = \phi^{[n+1]}(s)$, and that we are assuming $\phi^{[n+1]}(s)$ bounded
over $[t, t+\delta)$. In what follows, we will identify any such $\phi$
with its \emph{representation} $(J^{[n]}_{t}\phi, \xi)$, and we
will write $\phi \equiv (J^{[n]}_{t}\phi, \xi)$. For a given vector $j \in \R^{n+1}$ 
and a function $\xi$ we identify $(j, \xi)$ with a function 
$\phi : [0, \delta) \to \R$ given by \eqref{eq:forward-taylor}
with $J^{[n]}_0 \phi = j$. The order $n$ is known from the context 
(the length of the vector $j$). We will write $(j, \xi)(t)$
to denote the evaluation $\phi(t)$.

\begin{definition}
	A grid of size $h_p = \frac{1}{p}$
	over the base interval $[-1, 0]$ is the set of points $\left(t_i\right)_{i=0}^{p}$
	with $t_i = -i   h$. We have $t_p = -1$ and $t_0 = 0$.
\end{definition}
We will usually drop subscript $p$. In what follows the constant $h$
will always mean the grid step size $h_p$ and the variable $\epsi$ 
will be such that $0 \le \epsi < h$.

\begin{definition}
	\label{def:cetap}
	For a vector $\eta = (n_1, \ldots, n_p)$ 
	the space $C^\eta_p$ is defined as:
	\begin{eqnarray*}
		C^\eta_p & = & \left\{ x : [-1, 0] \to \R: \exists j_i \in \R^{n_i + 1},  \exists \xi_i \in C^{0}([0, h], \R) \right. \\ 
		& & \left. \textrm{ such that }  x_{[t_i, t_{i-1})} \equiv (j_i, \xi_i)\right\}
	\end{eqnarray*}
\end{definition}
Please note that functions in $C^\eta_p$ can be discontinuous
at the grid points $t_i$. However, if the function $x$ is 
$C^{n+1}$-smooth on the whole $[-1, 0]$, then obviously $x \in C^n_p$. 
Later, the discontinuities at the grid points will allow us to 
handle discontinuities arising from solving \eqref{eqn:Eg}.

\begin{definition}
	A ($p$, $\eta$)-representation (or just the representation)
	of a function $x \in C^\eta_p$ is the collection 
	of $(z, j_1, \ldots, j_p, \xi_1, \ldots, \xi_p)$,
	where $z = x(0)$ and $x_{[t_i, t_{i-1}]} \equiv (j_i, \xi_i)$
	as in Definition~\ref{def:cetap}.
\end{definition}
Please note, that $(z, j_1, \ldots, j_p, \xi_1, \ldots, \xi_p) = (z, j, \xi)$
can be think of as an element of $\R^M \times (C^0([0, h], \R))^p$
with additional structure (coefficients of 
$j$ divided among $p$-grid and $\eta$-orders).
Here $M = M(p, \eta) = 1 + \sum_{j=1}^{p} (\eta_i + 1)$.
In the computations, the structure $\eta$ can be deduced
from the structure of components of $j$.
We will again just write that $x = (z, j, \xi)$
and we might write $(z, j, \xi)(t)$ to denote
the evaluation of the corresponding $x$ at $t$.
Note that 
if $t = -i   h + \epsi$, then $(z, j, \xi)(t) = (j_i, \xi_i)(\epsi)$
and if $t = 0$, then $(z, j, \xi)(t) = z$.

Now we need finite description of the
subsets of $C^\eta_p$ to put them into computer.
Let $Z, Y$ are closed intervals in $\R$ and $\xi \in C^0(Z, \R)$.
By writing $\xi \in Y$ we will denote the fact that 
$\left[\inf_{x \in Z} \xi(x), \sup_{x\in Z} \xi(x)\right] \subset Y$. 
\begin{definition}
	\emph{The representable ($p$,$\eta$)-functions-set} (or just \emph{f-set})
	is a pair $$(A, \Xi) \subset \R^M \times \IR^p.$$ The support 
	of the f-set is $X(A, \Xi) \subset C^\eta_p$: 
	\begin{equation}
		x \in X(A, \Xi) \iff \exists (z, j) \in A, \exists \xi \in \Xi: x = (z, j, \xi).
	\end{equation}
\end{definition}
We will often identify the f-set with its support. We will
use projections: $z(A)$, $j(A)$, $j_i(A)$, to denote
the respective components of the representation. 
For $x \in C^\eta_p$
by $X(x)$ we will denote the smallest f-set
that contains the representation $(z, j, \xi)$ of $x$.
If we want to stress out what ($p$, $\eta$)-representation
we are using (e.g. when the space $C^\eta_p$ is not clear from the context), 
we will write $X^\eta_p(x)$. We use the convention that for $n \in \N$
$X^n_p = X^\eta_p$, with $\eta = (n, \ldots, n)$ - a uniform
order over all grid points. We also note that it is easy,
given an ($p$, $\eta$)-set $X$ with $\eta_i > 0$ for all $i$, 
to obtain a ($p$, $\lambda$)-set $Y$ representing 
$\frac{dg}{dt} $ for all $g \in X$ (with $\lambda_i = \eta_i - 1$).
We will therefore write $X'$ to denote this set. 
It will be used in Algorithm~\ref{alg:inewton}.

The work \cite{SZ2} provides two (families of) algorithms to propagate forward in time
solutions to a general DDE of the form 
\begin{equation}
	\label{eqn:smooth-dde}
	x'(t) = F\left(x(t), x(t-h), x(t-2h), \ldots, x(t-\tau)\right),
\end{equation}
where $F$ is smooth enough.
The algorithms are denoted by $\mathcal{I} : C^\eta_p \to C^\lambda_p$
and $\mathcal{I}^\epsi : C^\eta_p \to C^\zeta_p$, respectively, such that they
guarantee for the semiflow $\varphi$ associated to DDE~\eqref{eqn:smooth-dde} 
the following:
\begin{itemize}
	\item[(i)] $\varphi(h, x) \in \mathcal{I}(X(A, \Xi)) = X(B, \Lambda) \subset C^\lambda_p$, for all $x \in X(A, \Xi)$;
	\item[(ii)] $\varphi(\epsi, x) \in \mathcal{I}^\epsi(X(A, \Xi)) = X(B, Z) \subset C^\zeta_p$ for all $x \in X(A, \Xi)$,
	such that $\varphi(\epsi, x) \in C^\zeta_p$.
	\item[(iii)] if $x \in X(A, \Xi) \cap C^\infty([-1, 0], \R)$, then it is guaranteed  that 
	$J^{[\zeta_{i}]}_{-i  h} \varphi(\epsi, x) \in j_i(I_{\epsi}(X(A, \Xi)))$, for
	all $i \in 1, \ldots, p$.
\end{itemize}
The property (i) guarantees that the solution to any initial data in
a ($p$,$\eta$)-fset are representable after full time step $h$. The properties
(ii) and (iii) guarantees that, for solutions smooth enough, the shift by a
step $\epsi$ smaller than the full step $h$ is well defined, and the shifted
segment can be described by some ($p$,$\zeta$)-representation.
The grid size $p$ of the representations must be the same, but 
the structure of the result sets given by $\lambda$ and $\zeta$ can be 
different from input $\eta$ for technical reasons, important to obtain
rigorous results of good numerical quality. The details are given in
\cite{SZ2}, but Reader can think of
$\mathcal{I}$ and $\mathcal{I}^\epsi$ just as functions 
$C^\eta_p \to C^\eta_p$, with a fixed $\eta$, 
for the sake of presentation of the methods.
It is also easy to see that the iteration $\mathcal{I}^p$ realizes
the classical method of steps for the DDEs, and a Poincar\'e
map $P : S \supset D \to S$ might be realized as 
$\mathcal{I}^{[\epsi_1, \epsi_2]} \circ \mathcal{I}^m$,
if the return time function $t_P : S \supset D \to \R^+$ can be 
proved to be continuous and estimated by 
$t_P(D) \subset m   h + [\epsi_1, \epsi_2]$,
$0 \le \epsi_1 \le \epsi_2 < h$. The method from
\cite{SZ2} can be used to prove that the Poincar\'e 
maps are well defined in that matter. 

\begin{definition}
	Assume $p \in \N_+$ is the grid size. 
	
	\emph{The representation} of a function $\phi \in C^{+}_{r, sol}$
	is a collection of objects:
	\begin{equation*}
		\left(z, \left((d_i, z^i, j^i, \xi^i)\right)_{i=0}^{m-1} \right)
	\end{equation*} 
	such that $(z^i, j^i, \xi^i) \in C^{\eta^i}_{p}$, 
	$\phi_{[d_i, d_{i+1}]} = (z^i, j^i, \xi^i)_{[-1, -1 + d_{i+1} - {d_i}]}$, and
	$z = \phi(0)$.
\end{definition}
In other words, each ($p$,$\eta^i$)-representation $(z^i, j^i, \xi^i)$ is
given over whole basic interval, but we assume it is valid only
on the initial segment of length $\delta_i = d_{i+1} - {d_i}$ (it might span 
several grid points, and the end point is not necessarily at the grid point,
i.e. usually $\delta_i = k_i   h + \epsi_i$, $k_i \in \N$, $\epsi_i \in [0, h)$).
Each jet $j^i_p$ representing the function at $t = -p  h = -1$
is matched with the jet of $\phi$ at time $d_{i}$, 
See Figure~\ref{fig:representation}. The distinguished
element $z$ is called \emph{the head} of the solution
(as in this analogue: the solution $x_t^\phi$ is a snake moving in
the phase space, and the value $z$ represents 
$x_t^{\phi}(0)$ - its head - that is moving according to \eqref{eqn:Eg}).
Analogously, we define \emph{f-set in $C^{+}_{r, sol}$} as
a collection of f-sets: 
\begin{equation*}
	\left(Z, \left((d_i, A^i, \Xi^i)\right)_{i=0}^{m-1} \right).
\end{equation*}
The smallest f-set containing the representation of $\phi \in C^{+}_{r, sol}$
is denoted again as $X(\phi)$.

\begin{figure}
	\begin{center}
		\includegraphics[width=9cm]{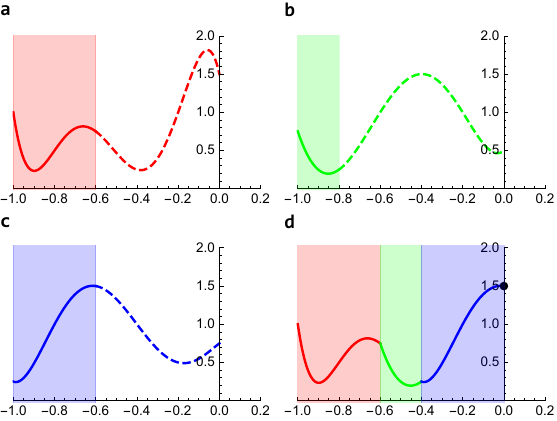}
	\end{center}
	\caption{\label{fig:representation}Schematic picture showing the 
		representation of a function in $C^{+}_{r, sol}$ (d), build
		with $C^n_p$ representations (a-c). The dashed lines show
		parts of the $C^n_p$ representations not used in computations, while
		the coloured backgrounds show ranges of respective lengths $\delta_i = d_i - d_{i+1}$.
		In this case $i \in \{0, 1, 2\}$. In (d), the ranges are shown
		in respective colours between points $d_i$ and $d_{i+1}$, where the 
		$C^n_p$ representation holds. The black dot
		in (d) is $z$ - the head of the representation.}
\end{figure}

\begin{rem}[A technical note about the representation $\phi \in C^+_{r, sol}$]
	It might be strange mathematically to represent $\phi$ by ,,gluing''
	together representations of $C^{\eta^i}_p$, especially, when there is 
	a lot of unnecessary data, as each piece is valid only on (possibly) small
	interval $[-1, -1+\delta_i)$. However, with such a representation
	we get a lot easier implementation of the algorithm to propagate
	data using the algorithms $\mathcal{I}$ and $\mathcal{I}^\epsi$,
	as the data is already in a good format.
\end{rem}

We will now present Algorithm~\ref{alg:main} to 
compute rigorous estimates on $x^\phi$ for $\phi = e^{-a(s+1)}$ 
over a long interval $[-1, M]$, $M \in \N$.  
The algorithm successively computes f-sets representing the solution
as an element of $C^+_{r,sol}$ over consecutive intervals
$[k-1, k]$, $k \in \N$. In what follows, we denote
by $\mathcal{I}_{\le 1}$, $\mathcal{I}^\epsi_{\le 1}$, 
$\mathcal{I}_{> 1}$, $\mathcal{I}^\epsi_{> 1}$ the 
procedures from \cite{SZ2} that
are configured to compute estimates on semiflows for smooth 
systems $x'(t) = -a x(t) + b   f(x(t-1))$ and $x'(t) = -a x(t)$, 
respectively. 
To organize the presentation, we first give a 
very general description of Algorithm~\ref{alg:main},  
then we shortly describe what the three subroutines
used in it do, finally we state 
the main Lemma~\ref{lem:alg-main} about the correctness
of the procedure. As the subroutines are very technical, 
their details are postponed to 
Appendix~\ref{secA1}, so is the proof of Lemma~\ref{lem:alg-main}.
\begin{algorithm}[h]
	\caption{Main algorithm\label{alg:main}}
	\begin{algorithmic}[1]
		\Require 
		$c, d$ - parameters of \eqref{eqn:Eg} and
		the f-set $X_0 = X(\phi) \subset C^+_{r,sol}$
		\Ensure 
		An f-set $X_1 \subset C^{+}_{r, sol}$ such that it contains the solution  
		$\Gamma(1, \phi)$ in the sense of Lemma~\ref{lem:alg-main}.
		
		\vspace{0.5em} \hrule \vspace{0.5em}
		\State $U \gets \texttt{shift}(X_0)$ \Comment{shift all subsegments by full time lag}
		\State $V \gets \texttt{recut}(U)$ \Comment{then, cut new segment at points crossing $x = 1$}
		\State $X_1 \gets \texttt{rake}(V)$ \Comment{finally, merge consecutive segments lying whole in $C^{>1}$.}
		\State \Return $X_1$
	\end{algorithmic}
\end{algorithm}
The additional procedures used in Algorithm~\ref{alg:main} do the following:
\begin{itemize}
	\item Algorithm~\ref{alg:inewton} (Interval Newton's Method) 
	is a standard procedure to prove that there is a single  
	zero of a function in a given neighbourhood. It can also
	be used to narrow the set to be as small as possible.   
	We will use it as a sub-procedure in 
	Algorithm~\ref{alg:recut} but we provide the pseudo-code
	for completeness and to demonstrate places where Algorithm~\ref{alg:recut}
	might fail later. What is more, the successful execution of the algorithm will guarantee 
	that the Property~(P2) is satisfied. 
	
	\item Algorithm~\ref{alg:shift} - ,,shift'': realizes one iteration of the method of steps
	to convert $\phi$ into $\Gamma(1, \phi)$. The resulting description (pre-representation)
	is not a valid $C^+_{r, sol}$ f-set representation and it needs to be corrected for
	the subsequent iterations. 
	
	\item Algorithm~\ref{alg:recut} - ,,re-cut'': makes sure to find
	all intersections of the pre-representation of $\Gamma(1, \phi)$ 
	with the line $x = 1$ and convert the pre-representation into 
	representation in $C^+_{r, sol}$.
	
	\item Finally, Algorithm~\ref{alg:rake} - ,,rake'': removes all consecutive
	segments of the solution that lie in $C^{>1}$ and combines them
	into one segment. This reduces the size of the representation
	and in turn lessens the computational complexity of the subsequent
	steps. It also increases the accuracy, as the $\mathcal{I}_{>1}$ could
	be used for longer uninterrupted periods in Algorithm~\ref{alg:shift}
	resulting in less \emph{wrapping-effect} impact of Line~\ref{line:large-wrapping} 
	in Algorithm~\ref{alg:shift}.
\end{itemize}

\begin{lemma}
	\label{lem:alg-main}
	Let $\phi \in C^+_{r,sol}$.
	If succeeds, Algorithm~\ref{alg:main} applied to the f-set representation
	$X(\phi)$ produces an f-set 
	$X = \left(Z, \left((\bar{d}_i, A^i, \Xi^i)\right)_{i=0}^{m-1} \right) \subset C^+_{r,sol}$
	with $\bar{d}_i \in \IR$ such that 
	\begin{enumerate}[label={\Alph*})]
		\item  \label{alg-main-1} 
		if $d_i$ are the points where $\Gamma(1, \phi)(d_i) = 1$, 
		then $d_i \in \bar{d}_i$,
		\item  \label{alg-main-2}
		if $\Gamma(1, \phi)_{[d_i, d_{i+1}]} \in C^{\le 1}$, 
		then $\Gamma(1, \phi)_{[d_i, d_{i+1}]} \in X_{[d_i, d_{i+1}]}$.
	\end{enumerate}
\end{lemma}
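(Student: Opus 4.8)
The plan is to show that whenever none of the three subroutines aborts, the f-set returned by Algorithm~\ref{alg:main} contains the unique solution $\Gamma(1,\phi)$ of \eqref{eqn:Eg} supplied by Proposition~\ref{prop:exist}, together with rigorous enclosures of its crossings of the line $x=1$. The argument is a segment-by-segment tracking of $\Gamma(1,\phi)$ over $[0,1]$, resting on the observation that on every subinterval where the delayed argument $x^\phi(\cdot-1)$ is known to lie strictly above $1$ or weakly below $1$, Equation~\eqref{eqn:Eg} reduces to one of the two \emph{smooth} vector fields $x'=-cx$ or $x'=-cx+d\,g(x(\cdot-1))$, so that the validated integrators $\mathcal I_{>1},\mathcal I^{\epsi}_{>1}$ and $\mathcal I_{\le1},\mathcal I^{\epsi}_{\le1}$ of \cite{SZ2} are applicable with their guarantees (i)--(iii).

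First I would verify the correctness of \texttt{shift}. By the definition of $C^+_{r,sol}$ the input $\phi$ is split at points $-1=d_0<\dots<d_m=0$ into pieces $\phi_{[d_i,d_{i+1}]}$, each lying in $C^{>1}$ or in $C^{\le1}$. For the $i$-th piece, $x^\phi$ restricted to the shifted interval $[d_i+1,d_{i+1}+1]$ solves, by Lemma~\ref{lem:comp-spaces}, a smooth ODE --- either $x'=-cx$ in the $C^{>1}$ case, or the non-autonomous $x'=-cx+d\,h(t)$ with $h(t)=g(x^\phi(t-1))$ in the $C^{\le1}$ case --- and is therefore $C^\infty$ on that interval. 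Applying the appropriate integrator piece by piece, with a fractional step $\mathcal I^{\epsi}$ at the two ends so that the new grid points are respected, property (iii) guarantees that the resulting pre-representation $U$ contains $\Gamma(1,\phi)$ pointwise on $[0,1]$; what it does \emph{not} yet guarantee is that $U$ is a legitimate $C^+_{r,sol}$ f-set, since the breakpoints $d_i+1$ need not be the points where the new segment meets $x=1$.

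Next I would analyse \texttt{recut}. On each shifted piece the enclosed segment is genuinely smooth, hence both its value and its derivative are rigorously enclosed (the latter through the derived set $X'$). Each candidate subinterval is examined with the interval Newton operator of Algorithm~\ref{alg:inewton}: success certifies a unique, transversal zero of $x(\cdot)-1$ inside it and narrows it to some $\bar d_i\in\IR$, while on the complementary subintervals the enclosure of $x(\cdot)-1$ is checked to have one strict sign. If all these checks succeed, then every crossing $d_i$ of $\Gamma(1,\phi)$ with $x=1$ on $(0,1)$ is accounted for and satisfies $d_i\in\bar d_i$, which is assertion~\ref{alg-main-1}; moreover the transversality produced along the way is precisely the verification of Property~(P2) mentioned in the text. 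After recutting, each resulting subsegment is labelled $C^{>1}$ or $C^{\le1}$ according to the verified sign, so $V$ is a bona fide f-set in $C^+_{r,sol}$ whose support contains $\Gamma(1,\phi)$; in particular every subsegment on which $\Gamma(1,\phi)$ lies in $C^{\le1}$ is contained in the corresponding component of $V$, which is assertion~\ref{alg-main-2}. Finally, \texttt{rake} merges maximal runs of consecutive $C^{>1}$ subsegments: on such a run $\Gamma(1,\phi)$ solves the same linear equation $x'=-cx$ and stays $>1$ on the open run, so the merged segment still contains the solution and still lies in $C^{>1}$, while the $C^{\le1}$ segments are untouched; hence \ref{alg-main-1} and \ref{alg-main-2} pass to the output $X_1$.

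The main obstacle is the bookkeeping inside \texttt{recut}: one must be certain that the finitely many candidate subintervals exhaust \emph{all} sign changes of $x(\cdot)-1$ on $[0,1]$ --- including possible crossings at or very near the non-smooth joints $d_i+1$ and near grid points --- and that the ``one strict sign'' checks on the complementary intervals leave no room for an undetected pair of crossings collapsing a $C^{\le1}$ or a $C^{>1}$ stretch. This is exactly where the clause ``if succeeds'' does the work: the statement is conditional on these interval-arithmetic verifications passing, and the proof consists in showing that their success forces the claimed enclosures; the uniqueness of $\Gamma(1,\phi)$ from Proposition~\ref{prop:exist} then closes the argument.
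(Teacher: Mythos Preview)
Your proposal is correct and follows essentially the same approach as the paper's proof: both argue that \texttt{shift} is rigorous by the guarantees (i)--(iii) of the integrators from \cite{SZ2} together with the smoothness furnished by Lemma~\ref{lem:comp-spaces}, that \texttt{recut} locates all crossings via the standard interval Newton operator (yielding \ref{alg-main-1}), and that \texttt{rake} only alters sub-segments in $C^{>1}$ so that \ref{alg-main-2} is preserved. Your treatment is considerably more detailed than the paper's terse version---in particular your discussion of the bookkeeping in \texttt{recut} and the role of the ``if succeeds'' clause makes explicit what the paper leaves implicit---but the underlying structure and key ingredients are the same.
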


Now we can state the results about the existence of
periodic solutions to Eq.~\ref{eqn:Eg} for 
concrete values of parameters. Before we do that however,
we need to discuss shortly the floating-point arithmetics
and its presentation in the article, following the convention used in \cite{SZ2}:
\begin{rem}[On the representation of floating-point numbers from computations]
	\label{rem:floating-points}
	Due to the very nature of
	the implementation of real numbers in computers, numbers like
	$0.1$ are \emph{not representable} \cite{ieee-754}, i.e. cannot be stored
	in the memory exactly. On the other hand, many representable numbers could
	not be shown in the text in a reasonable, human-readable way.
	As we use the \emph{interval arithmetic} to produce
	rigorous estimates on the true values, 
	whenever we present \emph{any output of the
		computer program} $x$ as a decimal number with non-zero fraction part, 
	then we have in mind that this in fact 
	represents some other number $y$ such that $y \in B(x, \epsilon^M)$,
	where $\epsilon^M$ is the machine precision of the double-precision 
	floating point numbers as defined in \cite{ieee-754}. 
	This convention applies also to intervals: if we write an interval $[a_1, a_2]$,
	then we understand this as $[b_1, b_2]$ with $b_1$, $b_2$ - representable and such that $[a_1, a_2] \subset [b_1, b_2]$.
	Finally, if we write a number in the following manner: 
	$d_1 . d_2  s d_k {}^{u_1 u_2  s u_m}_{l_1 l_2  s l_m}$ 
	with digits $l_i, u_i, d_i \in  \{0,..,9\}$ then it represents
	the following interval
	\begin{equation*}
		\left[ d_1 . d_2  s d_k l_1 l_2  s l_m, d_1 . d_2  s d_k u_1 u_2  s u_m \right].
	\end{equation*}
	For example $12.3_{456}^{789}$ represents the interval $[12.3456, 12.3789]$
	(here we also understand the numbers taking into account the former conventions).
\end{rem}

Now we present the result proven with the computer assistance:
\begin{theorem}
	\label{thm:main-comp-assisted}
	For $k$, $c$, $d$, $T$, $\omega_p$ as in Table~\ref{tab:results},
	the system \eqref{eqn:Eg} with $g(\xi) = \xi^k$
	has a periodic orbit with the basic period $\omega_p$ satisfying (P).
\end{theorem}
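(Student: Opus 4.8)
The plan is to rely entirely on the rigorous integrator developed in this section: Theorem~\ref{thm:main-comp-assisted} is a statement about finitely many explicit parameter triples $(c,d,k)$, each of which is to be certified by a single run of Algorithm~\ref{alg:main}. For each row of Table~\ref{tab:results} one fixes $c$, $d$ and $g(\xi)=\xi^k$ (the functions $g$ here being the $n\to\infty$ limits of the prototype nonlinearity \eqref{proto} with $k=2$ and $k=\tfrac12$), and takes as initial datum the segment $\phi(s)=e^{-c(1+s)}$, $s\in[-1,0]$. This $\phi$ is a single $C^\infty$ piece with values in $[e^{-c},1]$, hence $\phi\in C^{+}_{r,sol}$ (its one segment being of class $C^{\le 1}$), and it admits an explicit $(p,\eta)$-representation $X(\phi)$ with interval enclosures of the Taylor coefficients of the exponential. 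By Remark~\ref{remark}, the solution $x=x^\phi$ satisfies $x(t)=e^{-ct}$ on a full delay interval, and a periodic $p$ with (P1), necessarily unique, of minimal period $\omega_p>2$, exists as soon as one produces the first time at which $x$ returns to the section $\{x=1\}$ having stayed strictly above $1$ on the preceding delay interval. So it suffices to certify this return, its minimality, and its transversality.

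First I would iterate Algorithm~\ref{alg:main}, obtaining f-sets $X_k\subset C^{+}_{r,sol}$ that enclose $\Gamma(k,\phi)$ for $k=1,\dots,T$; correctness of each step, including the fact that the pieces on which \eqref{eqn:Eg} reduces to a smooth ODE are correctly identified (Lemma~\ref{lem:comp-spaces}), is guaranteed by Lemma~\ref{lem:alg-main}. From the bookkeeping of the \texttt{recut} routine I would read off the full ordered list of crossings of the line $x=1$ produced over $[-1,T]$, check that the last one occurs at a time lying in the tabulated interval $\omega_p$, that the head enters $C^{>1}$ there and remains there for a full delay, and that \emph{no earlier} crossing has this property --- this last point yields minimality of the period, exactly as in Remark~\ref{remark}(3). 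Positivity of the solution and compactness of the range of $x$ are read off from the same enclosures.

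Property (P2) is handled by transversality. At any time $t$ with $p(t)=1$ one has $p'(t)=-c\,p(t)+d\,g(p(t-1))=-c+d\,g(p(t-1))$, so $p'(t)\ne 0$ is equivalent to $g(p(t-1))\ne c/d$, i.e. to $p(t-1)\ne g^{-1}(c/d)=\xi_0$, i.e. to $(p(t),p(t-1))\ne(1,\xi_0)$; at times with $p(t)\ne 1$ the pair $(p(t),p(t-1))$ is automatically off $(1,\xi_0)$. Thus (P2) holds on all of $[0,\omega_p]$ once the enclosure of $x'$ at every recorded crossing of $x=1$ is bounded away from $0$, and this is precisely what a successful Interval Newton step (Algorithm~\ref{alg:inewton}, used inside \texttt{recut}) certifies. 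When the tabulated $(c,d)$ happen to satisfy \eqref{d>} one could instead invoke Lemma~\ref{lemma1}, but the computer-assisted check covers the remaining cases; throughout, all floating-point output is read in the sense of Remark~\ref{rem:floating-points}.

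The hard part is not the logical structure but the numerics: keeping the wrapping effect under control over the long horizon $[-1,T]$ for a switched system, in which on $t$-intervals with $x(t-1)>1$ the equation is the linear $x'=-cx$ while elsewhere it is $x'=-cx+d\,g(x(t-1))$ with a composition $g\circ x$ whose Taylor jets are awkward for $g(\xi)=\xi^k$, $k\notin\N$. This is why the \texttt{rake} step matters --- fusing consecutive $C^{>1}$ pieces so that $\mathcal{I}_{>1}$ can be applied over long uninterrupted stretches keeps the representation small and the enclosures tight --- and why the $S_1$-crossings, where $x(t-1)$ passes through $1$, must also be located exactly, so that the smooth-piece decomposition fed to $\mathcal{I}_{\le 1}$ and $\mathcal{I}_{>1}$ is correct. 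If at some step an Interval Newton call fails to isolate a transversal zero, or an enclosure becomes too wide to decide the sign of $x-1$ on a delay interval, the argument breaks for that row; the assertion of Theorem~\ref{thm:main-comp-assisted} for the rows of Table~\ref{tab:results} is exactly the claim that no such failure occurs.
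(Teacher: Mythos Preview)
Your proposal is correct and follows essentially the same approach as the paper: start from the single $C^{\le 1}$ segment $\phi(s)=e^{-c(1+s)}$, iterate Algorithm~\ref{alg:main} $T$ times, and read off from the resulting f-sets that $x^\phi$ hits $1$ from above after having spent more than a full delay above $1$. The paper packages the final check as four explicit interval conditions A)--D) on the crossing points $\bar d^{T-1}_{m_{T-1}}$ and $\bar d^{T}_1$ in the last two iterates (yielding the length $L>1$ reported in Table~\ref{tab:results}), while you describe the same verification narratively; you are in fact more explicit than the paper on two points it leaves largely implicit, namely that minimality of $\omega_p$ follows from the absence of any earlier such return (Remark~\ref{remark}), and that (P2) is certified automatically by the successful Interval Newton calls inside \texttt{recut}.
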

\begin{proof}
	Let $\phi(s) = e^{-c(s+1)}$. 
	We set $X_0 = X(\phi) = X\left(\phi(0), \left(-1, X^{n}_{p}\left(\phi\right)\right)\right)$.
	Then we successively compute
	\begin{equation}
		\label{eq:computer-assisted}
		X_k = \left(Z, \left(\bar{d}^k_i, A^{k,i}, \Xi^{k,i}\right)_{i=0}^{m_k-1}\right) := \mathtt{main}\left(X_{k-1}\right), \quad k \in 1, \ldots, T.
	\end{equation}
	Note, the sets $\bar{d}^k_i \in \IR$ contains the true $d^k_i$'s, according to Lemma~\ref{lem:alg-main}.
	Next, we show that there exists $\bar{d}^{T-1}_{m_{T-1}}$
	and $\bar{d}^{T}_1$, such that:
	\begin{enumerate}[label={\Alph*)}]
		\item $\omega_p \in T + 1 + \bar{d}^{T}_1 \subset [T, T+1]$;
		\item $1 = x^\phi(\omega_p - 1) \in (X_T)(\bar{d}^{T}_1)$;
		\item $\bar{d}^{T-1}_{m_{T-1}} - \bar{d}^{T}_1 \ge 0$; \label{cond:lenght}
		\item $X(A^{T-1,m_{T-1}},\Xi^{T-1,m_{T-1}}) \in C^{\ge 1}$ and 
		$X(A^{T,0},\Xi^{T,0}) \in C^{\ge 1}$.
	\end{enumerate}
	Note, that all those conditions imply that $\Gamma(\omega_p-1, \phi) \in C^{\ge 1}$,
	and therefore $\Gamma(\omega_p, \phi) = \phi$. Especially,
	$L = \bar{d}^{T-1}_{m_{T-1}} - \bar{d}^{T}_1 + 1$ is the length
	of the solution segment $x^\phi_{[\omega_p-1 - L, \omega_p - 1]} \ge 1$,
	so the condition \ref{cond:lenght} imply $L > 1$ and guarantees 
	$\Gamma(\omega_p, \phi) = \phi$.  
\end{proof}

Parts of the proof are computer assisted: propagation of \eqref{eq:computer-assisted}
and checking the four assumptions A-D. 
Programs to redo computations are available in \cite{RSwww}.
Some of the orbits proved in Theorem~\ref{thm:main-comp-assisted} are 
shown in Figures~\ref{fig:plots-k2}~and~\ref{fig:plots-khalf}, in particular
we point out the last plot in Figure~\ref{fig:plots-k2}, as it shows
a complicated periodic orbit $p$ for the values of parameters $k=2$, $c = 5\pi/(3\sqrt{3})$, $d=7.95$
for which a Hopf bifurcation occurs near the equilibrium point $\frac{c}{d}$. In a 
future paper it will be shown that there is a 
heterocilinic connection to $p$ from a 
small amplitude periodic solution 

\begin{table}[h]
	\setlength{\tabcolsep}{10pt}
	\renewcommand{\arraystretch}{1.5}
	\caption{Values of parameters $k$, $c$, $d$ in \eqref{eqn:Eg}, with $g|_{[0,1)}(\xi) = \xi^k$
		and the estimated period $\omega_p$ of 
		the periodic solutions. The parameters $T$, $n$, $p$ are as in 
		the proof of Theorem~\ref{thm:main-comp-assisted}, together
		with the estimated length $L$ of the last interval, over which the 
		solution $x^\phi$ lies above line $x = 1$. Presented values, when
		not computer-representable, are stored during the computations
		as intervals containing them as discussed in Remark~\ref{rem:floating-points}.
		In particular, the value $c = 5\cdot\pi/(3\cdot\sqrt{3})$ means that in computations
		the rigorous interval containing this value was used, and the proof is valid for
		all values of $c$ in that interval.}	\label{tab:results}
	\begin{tabular}{c|c|c|c|c|c|c|c}
		\toprule
		$k$ & $c$ & $d$ & $T$ & $p$ & $n$ & $\omega_p$ & $L$ \\
		\midrule
		$2$ & $2$ & $20$ & $3$ & $128$ & $4$ & $2.2475762_{8868803}^{9130349}$ & $1.19642835_{388474}^{650057}$ \\			
		$2$ & $2$ & $5$ & $10$ & $128$ & $4$ & $9.718074_{41624915}^{68592668}$ & $1.173666_{27746896}^{65610938}$ \\
		$2$ & $5$ & $13.5$ & $11$ & $128$ & $4$ & $10.3996_{6084251935}^{8118185736}$ & $1.051_{39513368493}^{42163578326}$ \\
		$2$ & $4$ & $12.71$ & $16$ & $512$ & $5$ & $15.50313_{451849748}^{939964593}$ & $1.1477_{3973007044}^{4713794320}$ \\			
		$2$ & $\frac{5\cdot\pi}{3\cdot\sqrt{3}}$ & $7.95$ & $15$ & $256$ & $4$ & $14.70203_{302733336}^{563332337}$ & $1.10426_{444733693}^{787605344}$ \\
		$\frac{1}{2}$ & $2.5$ & $5.1$ & $9$ & $128$ & $4$ & $8.714_{57545370406}^{62830383538}$ & $1.154_{25881778764}^{36842629369}$	\\		
		$\frac{1}{2}$ & $2.5$ & $5.2$ & $12$ & $256$ & $5$ & $11.4322_{6910337948}^{7232415869}$ & $1.15709_{158785186}^{779182356}$ \\			
		$\frac{1}{2}$ & $3.8$ & $5.205$ & $17$ & $1024$ & $5$ & $15.6237_{3745723471}^{4300377583}$ & $1.0616_{4001088705}^{5181449065}$ \\
		$\frac{1}{2}$ & $3.8$ & $5.2$ & $15$ & $1024$ & $5$ & $13.741833_{36789089}^{66133022}$ & $1.05535_{361584126}^{410986789}$ \\
		$\frac{1}{2}$ & $3.8$ & $5.195$ & $15$ & $1024$ & $5$ & $13.63633_{685525799}^{711062829}$ & $1.054959_{16531613}^{80790828}$ \\
		$\frac{1}{2}$ & $3.81$ & $5.215$ & $15$ & $1024$ & $5$ & $13.743261_{57465981}^{86363402}$ & $1.05439_{789658996}^{837966255}$ \\			
		$\frac{1}{2}$ & $4.65$ & $6.51$ & $16$ & $1024$ & $5$ & $14.735_{51192774460}^{70287262517}$ & $1.041_{71346771675}^{98571950225}$ \\			
		$\frac{1}{2}$ & $4.66$ & $6.53$ & $15$ & $1024$ & $5$ & $14.730_{16163977130}^{21436894118}$ & $1.047_{84259656712}^{96838456861}$ 	
	\end{tabular}
	
\end{table}

\begin{figure}
	\begin{center}
		\includegraphics[height=4.5cm]{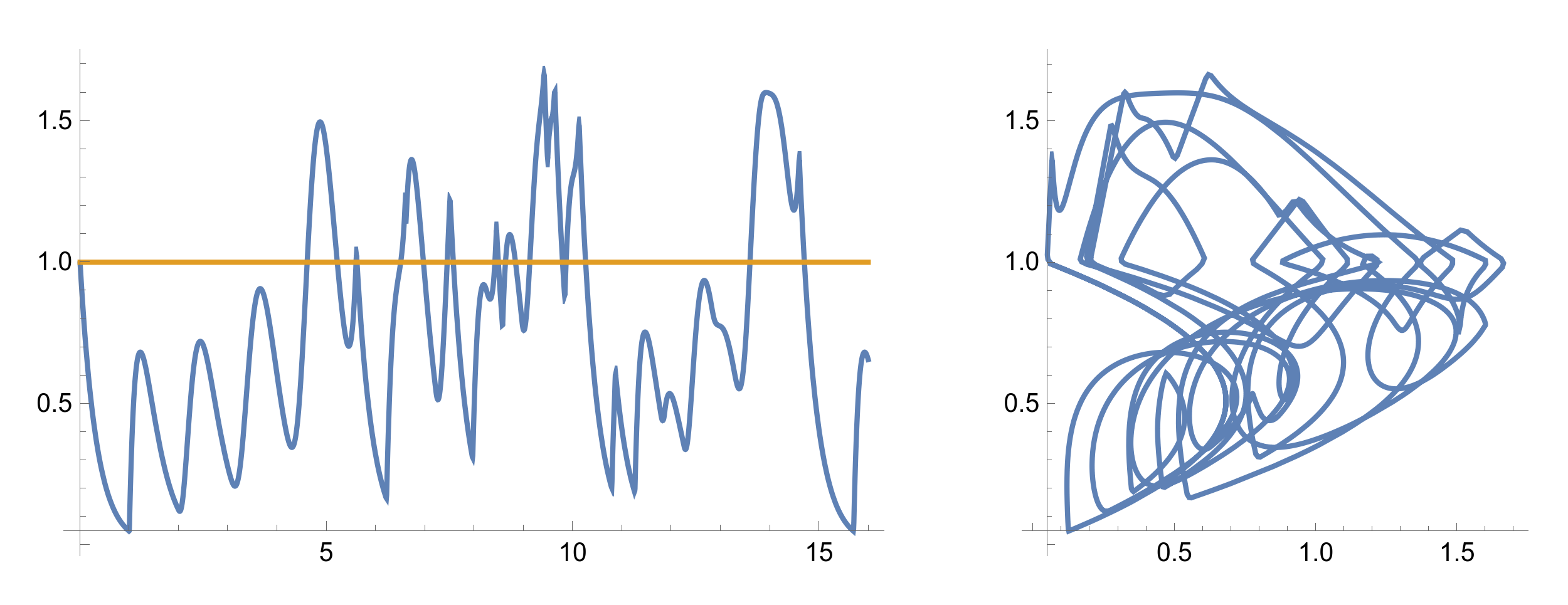}
		
		\includegraphics[height=4.5cm]{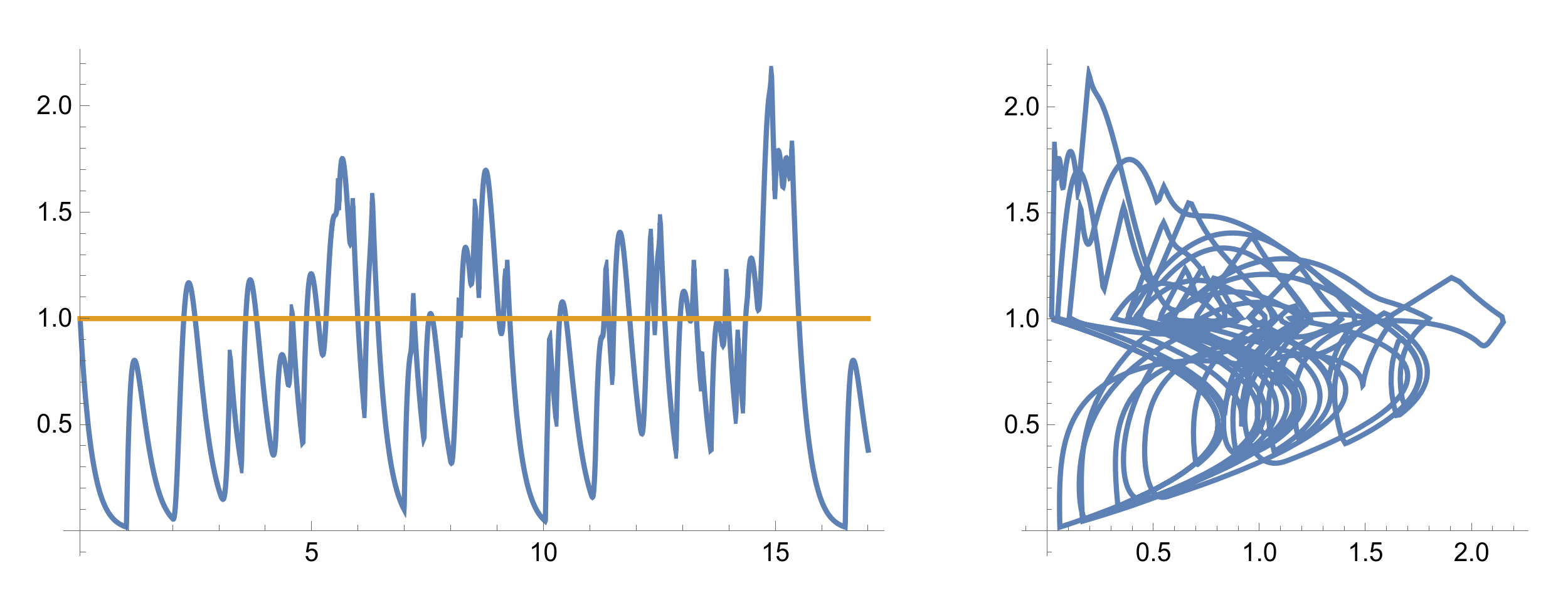}
	\end{center}
	\caption{\label{fig:plots-k2}Plots of the numerical representations of solutions
		corresponding to entries in Table~\ref{tab:results} for $k=2$, presented in the same fasion as in Figure~\ref{fig:figk2}.
		Solutions correspond to small intervals containing values of parameters: 
		$c=\frac{5\cdot\pi}{3\cdot\sqrt{3}}$ (i.e. parameter of Hopf biffurcation at $\xi_0$), $d=7.95$ (up), and $c=4$, $d=12.71$ (bottom).}
\end{figure}

\begin{figure}
	\begin{center}
		\includegraphics[height=4.5cm]{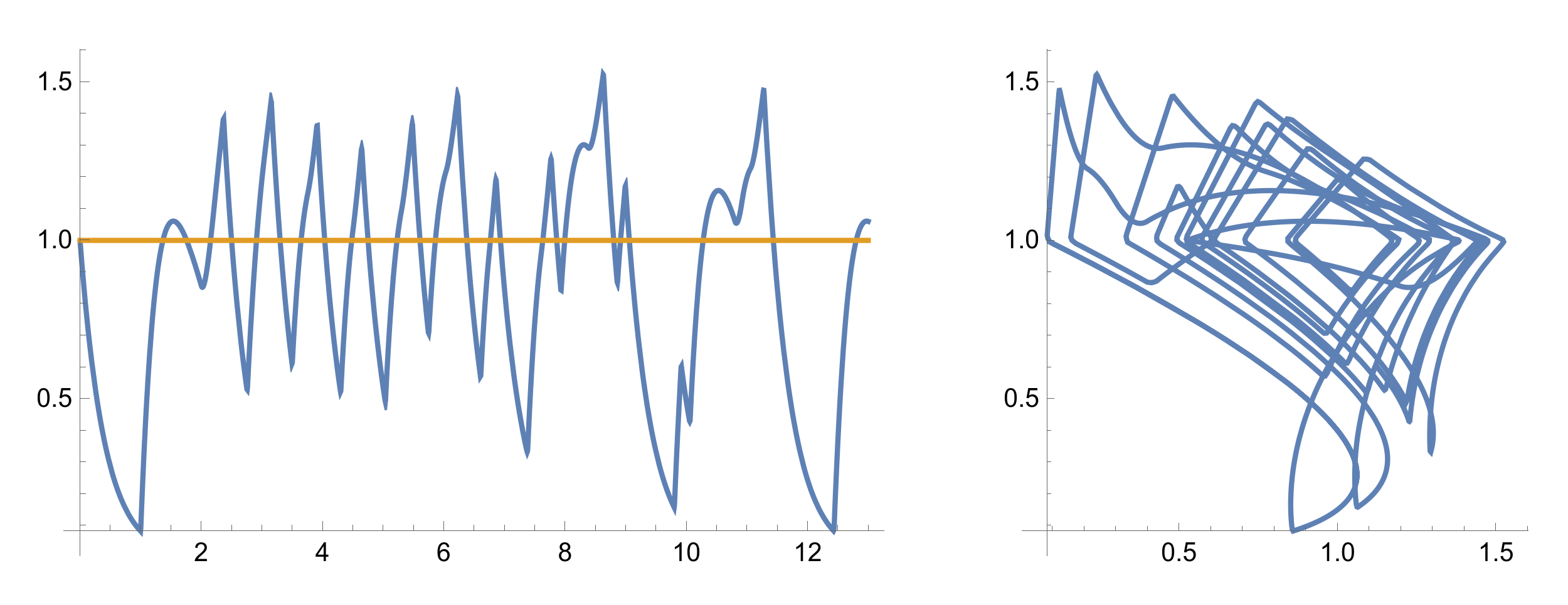}
		
		\includegraphics[height=4.5cm]{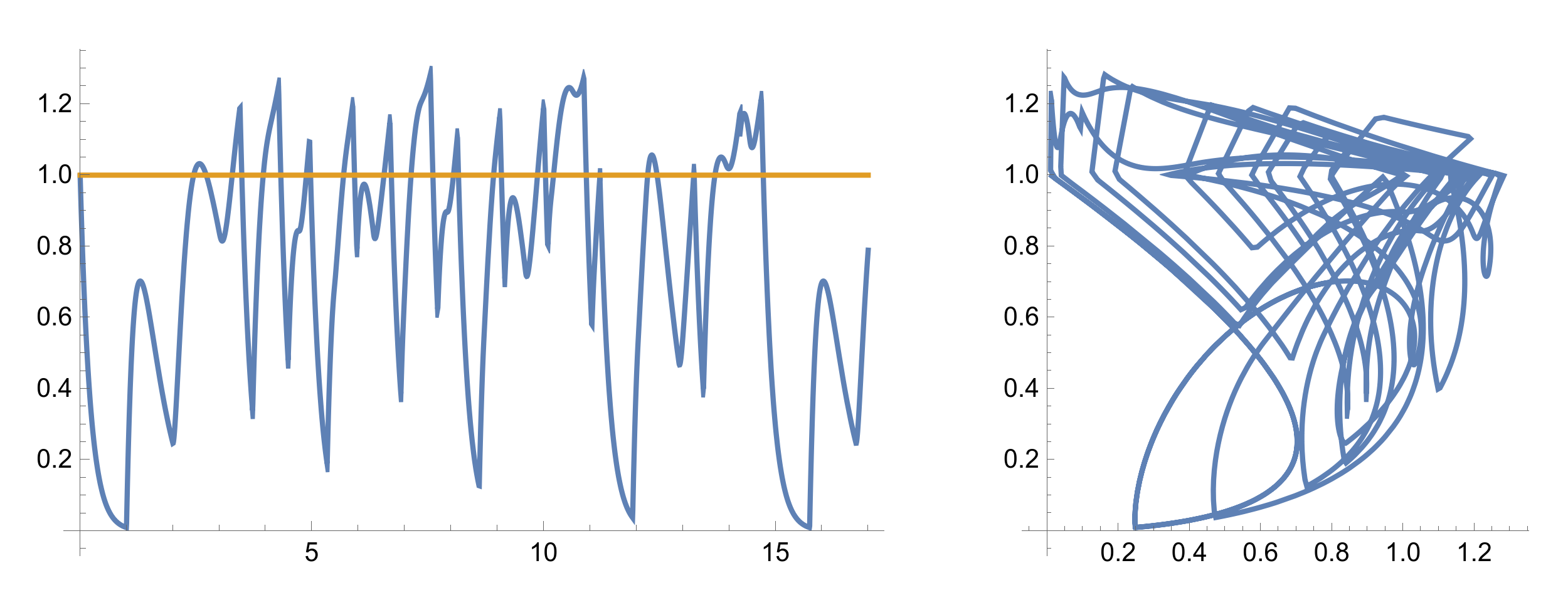}		
	\end{center}
	\caption{\label{fig:plots-khalf}Plots of the numerical representations of solutions
		corresponding to entries in Table~\ref{tab:results} for $k=1/2$, presented in the same fasion as in Figure~\ref{fig:figk2}.
		Solutions correspond to small intervals containing values of parameters: 
		$c=2.5$, $d=5.2$ (up), and $c=4.65$, $d=6.51$ (bottom).}
\end{figure}


\clearpage

\clearpage
\begin{appendices}
	
	\section{Details on the algorithms}\label{secA1}
	
	\begin{algorithm}[h]
		\caption{\texttt{inewton}: for a (forward) Taylor representation $(J, \Xi)$ of $f$ over $I = [0, \delta]$
			proves either: 1) there is no root of $f$ in $I$, 2) there is exactly one root $t_0 \in [0, \delta]$, or
			c) raises an error if multiple zeroes are possible. We use notation $[x]$ to denote a set $[x] \subset \IR$ 
			such that it contains $x \in [x]$. Operations $\diam([a, b]) := b - a$ and $\midpoint([a, b]) := \frac{a+b}{2}$ 
			denote the diameter and the middle point of the set, respectively. \label{alg:inewton}}
		
		\begin{algorithmic}[1]
			\Require $(J, \Xi) \subset \R^{n+1} \times \IR$, $I = [0, \delta]$, $\epsilon_M \in \R_+$, $M \in \N$.
			\Ensure $Z_x = \{ t \in I : x(t) = a \} = \{ t_0 \} \subset \bar{t}_0 \subset [0, \delta]$ or $Z_x = \emptyset$
			for all $x = (j, \xi) \in (J, \Xi)$. It returns $\bar{t}_0 \in \IR$.
			
			\vspace{0.5em} \hrule \vspace{0.5em}
			
			\State $(J', \Xi') \gets \left((i   J_i)_{i=1}^{n}, (n+1)  \Xi\right)$ \Comment{representation of the derivative}
			\State $[t_0] = [0, \delta]; \quad t_0 = \frac{\delta}{2}$ \Comment{choose $t_0 \in \bar{t}_0$}
			\State $k = 0$
			\While{$k < M \land \diam(t_k) > \epsilon_M$} \Comment{common stop conditions}
			\If{$[t_k] = \emptyset \lor a \notin (J, \Xi)([t_k])$} \Comment{there is no chance for solution in $I$}
			\State \Return $\emptyset$
			\EndIf
			\If{$0 \in (J, \Xi)'([t_k])$} \Comment{Newton's operator ill-defined}
			\State Raise an Error and Abort
			\EndIf	
			\State $[t_{k+1}] = \left(t_k - \frac{(J, \Xi)(t_k) - a}{(J, \Xi)'([t_k])}\right) \cap [t_k] \quad t_{k+1} = \midpoint([t_{k+1}])$
			\State $k \gets k + 1$
			\EndWhile
			\If{$t_k \subset \interior(t_0)$}  \Comment{one need strong inclusion to prove uniqueness}
			\State \Return $t_k$
			\Else
			\State Raise an Error and Abort
			\EndIf
		\end{algorithmic}
	\end{algorithm}
	
	\begin{algorithm}[h]
		\caption{\texttt{shift}: computes a segment after one full delay\label{alg:shift}}
		\begin{algorithmic}[1]
			\Require $X(W) \subset C^+_{r,sol}$, $W = \left(Z, (d_i, A^i, \Xi^i)_{i=0}^{m-1}\right)$
			for some $m \in \N_+$.
			\Ensure $\left(Z', \left(d_i, A'^i, \Xi'^i\right)_{i=0}^{m-1}\right)$ such that
			$\Gamma(1, x)(0) \in Z'$ and $\Gamma(1, x)_{[d_i, d_{i+1}]} = y_{[-1, -1 + \delta_i]}$
			for some $y \in X(A'^i, \Xi'^i) \subset C^{\eta'_i}_{p}$, for all $x \in X(W)$.
			
			\vspace{0.5em} \hrule \vspace{0.5em}
			\State $head \gets Z$ \Comment{start at the current value of the solution}
			\For{$i = 0$ to $m-1$ } \Comment{then process all subsequent segments}
			\State $\tilde{A} \gets \left(head, j(A^i)\right)$ \Comment{change segment head to the correct value}
			\If{$(\tilde{A}, \Xi^i)_{[-1, -1 + \delta_i]} \subset C^{>1}$}
			\State $(A'^i, \Xi'^i) = \mathcal{I}_{>1}^p\left(\tilde{A}, \Xi^i\right)$ \Comment{Propagate with the correct part of \eqref{eqn:Eg}}
			\ElsIf{$(\tilde{A}, \Xi^i)_{[-1, -1 + \delta_i]} \subset C^{\le 1}$}
			\State $(A'^i, \Xi'^i) = \mathcal{I}_{\le 1}^p\left(\tilde{A}, \Xi^i\right)$ \Comment{Propagate with the correct part of \eqref{eqn:Eg}}
			\Else
			\State Raise an Error and Abort \Comment{Should not happen for a correct $X(W)$}
			\EndIf
			\State $head \gets \left(A'^i, \Xi'^i\right)(-1 + \delta_i)$ \Comment{update the head to the current value}\label{line:large-wrapping}
			\EndFor
			\State $Z' \gets head$ \Comment{the new head of the whole solution is the last head}
			\If{$1 \in Z'$} \Comment{this would break the subsequent call to \texttt{shift}!}
			\State Raise an Error and Abort
			\EndIf
			\State \Return $\left(Z', (d_i, A'^i, \Xi'^i)_{i=0}^{m-1}\right)$
		\end{algorithmic}
	\end{algorithm}
	
	\begin{algorithm}[h]
		\caption{\texttt{recut}: computes intersections with $x = 1$ and reorganizes the $C^+_{r, sol}$ representation
			accordingly\label{alg:recut}} 
		\begin{algorithmic}[1]
			\Require $W = \left(Z, (d_i, A^i, \Xi^i)_{i=0}^{m-1}\right)$ as in the \underline{output} of
			Algorithm~\ref{alg:shift}.
			\Ensure $W' = \left(Z, (d'_i, A'^i, \Xi'^i)_{i=0}^{m'-1}\right)$ such that
			$W \subset C^+_{r, sol}$ and $X(W) \subset X(W')$, $m' \ge m$.
			
			\vspace{0.5em} \hrule \vspace{0.5em}
			\ForAll{$(d_i, A^i, \Xi^i)$}
			\State $T = $ empty list
			\ForAll {$k \in \N : -k   h \in [-1, -1 + \delta_i]$}
			\State $(\epsi_k, sign) = \mathtt{inewton}\left( (A^i_k, \Xi^i_k), a=1, \epsilon_M=10^{-15}, M=10\right)$
			\If {$\epsi_k \neq \emptyset$} \Comment{$\epsi_k$ is the ,,local time'' within $[-kh, -kh+1]$}
			\State add $(k, \epsi_k)$ to the end of $T$ \Comment{we add global time $kh + \epsi$}
			\EndIf
			\EndFor
			\State $(\tilde{A}, \tilde{\Xi}) \gets (A^i, \Xi^i)$; \quad $\tilde{t} \gets d_i$
			\If {$(\tilde{A}, \tilde{\Xi})$ was computed with $\mathcal{I}^p_{\le 1}$} \Comment{choose the right $\mathcal{I}$ algorithm}
			\State $\left(\tilde{\mathcal{I}}, \tilde{\mathcal{I}}^\epsi\right) = \left(\mathcal{I}_{\le 1}, \mathcal{I}^\epsi_{\le 1}\right)$
			\ElsIf {$(\tilde{A}, \tilde{\Xi})$ was computed with $\mathcal{I}^p_{> 1}$}
			\State $\left(\tilde{\mathcal{I}}, \tilde{\mathcal{I}}^\epsi\right) = \left(\mathcal{I}_{> 1}, \mathcal{I}^\epsi_{>1}\right)$
			\Else \Comment{should not happen}
			\State Raise an Error and Abort
			\EndIf
			\State $(\tilde{A}, \tilde{\Xi}) \gets (A^i, \Xi^i)$; \quad $\tilde{t} \gets d_i$
			\ForAll{$(k, \epsi) \in T$}
			\State Add $(\tilde{t}, \tilde{A}, \tilde{\Xi})$ to the output
			\State $(\tilde{A}, \tilde{\Xi}) \gets \left(\tilde{\mathcal{I}}^\epsi \circ \tilde{\mathcal{I}}_{\le 1}^k\right) (A^i, \Xi^i)$
			\Comment{always start from initial!}
			\State $\tilde{t} \gets d_i + k   h + \epsi$
			\EndFor
			\State Add $(\tilde{t}, \tilde{A}, \tilde{\Xi})$ to the output	
			\EndFor
		\end{algorithmic}
	\end{algorithm}
	
	\begin{algorithm}[h]
		\caption{\texttt{rake}: removes redundant segments in $C^{>1}$\label{alg:rake}}
		\begin{algorithmic}[1]
			\Require $W = \left(Z, (d_i, A^i, \Xi^i)_{i=0}^{m-1}\right) \subset C^+_{r,sol}$ 
			(as in output of 
			\Ensure $W' = \left(Z, (d'_i, A'^i, \Xi'^i)_{i=0}^{m'-1}\right) \subset C^+_{r,sol}$
			such that $F\left(1, X(W)\right) = F\left(1, X(W)\right)$ and $m' \le m$.
			\vspace{0.5em} \hrule \vspace{0.5em}
			\State $i \gets 0$, $m' \gets 0$
			\State $d'_0 \gets d_0$
			\State $d'_1 \gets d_1$
			\While{$i < m-1$}
			\State $(A'^K, \Xi'^K) \gets (A^i, \Xi^i)$
			\State $j \gets i + 1$ 
			\If{$(A^i, \Xi^i)_{[-1, -1 + \delta_i]} \in C^{>1}$} \Comment {if true, check following intervals}
			\While{$j < m-1 \land (A^j, \Xi^j)_{[-1, -1 + \delta_j]} \subset C^{>1}$}
			\State $d'_{m'+1} = d_{j+1}$ \Comment{make the current interval to cover also $[d_j, d_{j+1}]$}
			\State $j \gets j+1$ \Comment{next, try another segment}
			\EndWhile
			\EndIf
			\State $i \gets j$ \Comment{we have skipped all previous intervals}
			\State $m' \gets m' + 1$
			\EndWhile
			\State \Return $\left(Z, \left(d'_i, A'^i, \Xi'^i\right)_{i=0}^{m'-1}\right)$
		\end{algorithmic}
	\end{algorithm}
	
	\begin{proof}[Proof of Lemma~\ref{lem:alg-main}]
		Correctness of Algorithm~\ref{alg:inewton} is well known, see e.g. \cite{KN}
		and references therein, and it produces $\bar{d}_i$ as in \ref{alg-main-1}. 
		Algorithm~\ref{alg:shift} produces rigorous estimates following
		the correctness of the algorithm $\mathcal{I}$ of 
		\cite{SZ2} to propagate any $C^\eta_p$ f-sets.
		Algorithm~\ref{alg:recut} works for all solutions
		such that their segment is smooth enough. Lemma~\ref{lem:comp-spaces}
		ensures that for a segment $\phi_{[d_i, d_{i+1}]} \in C^\infty$
		the solution $F\left(1, \phi_{[d_i, d_{i+1}]}\right)$ 
		(appearing in the output of Algorithm~\ref{alg:shift}
		and as an input to  Algorithm~\ref{alg:recut}) must also be $C^\infty$,
		so applying $\mathcal{I}_\epsi$ is justified. 
		Algorithm~\ref{alg:rake} modifies the solution segment
		only in the sub-segments that lie in $C^{\ge 1}$, thus not 
		altering the ,,important parts'' shown in \ref{alg-main-2}.
	\end{proof}
	
	\begin{rem}[Data stored in the algorithms]
		We use $\delta_i = d_{i+1} - {d_i} > 0$, to denote
		the length of the ,,definition'' intervals in space $C^+_{r, sol}$.
		In the actual implementation \cite{RSwww}, 
		we store $\delta_i$'s instead
		of $d_i$'s, and recompute $d_i$ when needed, using the fact that
		$d_0 = -1$.
	\end{rem}

\end{appendices}

\end{document}